\definecolor{luh-dark-blue}{rgb}{0.0, 0.313, 0.608}
\numberwithin{equation}{section}
\newtheoremstyle{thmlemcorr}{10pt}{10pt}{\itshape}{}{\bfseries}{.}{10pt}{{\thmname{#1}\thmnumber{ #2}\thmnote{ (#3)}}}
\newtheoremstyle{thmlemcorr*}{10pt}{10pt}{\itshape}{}{\bfseries}{.}\newline{{\thmname{#1}\thmnumber{ #2}\thmnote{ (#3)}}}
\newtheoremstyle{remexample}{10pt}{10pt}{}{}{\bfseries}{.}{10pt}{{\thmname{#1}\thmnumber{ #2}\thmnote{ (#3)}}}
\newtheoremstyle{ass}{10pt}{10pt}{}{}{\bfseries}{.}{10pt}{{\thmname{#1}\thmnumber{ A#2}\thmnote{ (#3)}}}
\theoremstyle{thmlemcorr}
\newtheorem{theorem}{Theorem}
\numberwithin{theorem}{section}
\newtheorem{lemma}[theorem]{Lemma}
\newtheorem{proposition}[theorem]{Proposition}
\newtheorem{definition}{Definition}
\numberwithin{definition}{section}
\newtheorem{theorem*}{Theorem}
\newtheorem{lemma*}[theorem]{Lemma}
\newtheorem{corollary*}[theorem]{Corollary}
\newtheorem{proposition*}[theorem]{Proposition}
\newtheorem{problem*}{Problem}
\newtheorem{conjecture*}{Conjecture}
\newtheorem{definition*}{Definition}
\newtheorem{assumption*}{Assumption}
\newtheorem{remark}[theorem]{Remark}
\newtheorem*{remark*}{Remark}
\newcommand{\Acal}{\mathcal{A}}
\newcommand{\Dcal}{\mathcal{D}}
\newcommand{\Ecal}{\mathcal{E}}
\newcommand{\Hcal}{\mathcal{H}}
\newcommand{\Ical}{\mathcal{I}}
\newcommand{\Lcal}{\mathcal{L}}
\newcommand{\Mcal}{\mathcal{M}}
\newcommand{\Ncal}{\mathcal{N}}
\newcommand{\Tcal}{\mathcal{T}}
\newcommand{\Wcal}{\mathcal{W}}
\newcommand{\Xcal}{\mathcal{X}}
\newcommand{\Ycal}{\mathcal{Y}}
\newcommand{\Zcal}{\mathcal{Z}}
\renewcommand{\Bbb}{\mathbb{B}}
\newcommand{\Cbb}{\mathbb{C}}
\newcommand{\Dbb}{\mathbb{D}}
\newcommand{\Hbb}{\mathbb{H}}
\newcommand{\Nbb}{\mathbb{N}}
\newcommand{\Rbb}{\mathbb{R}}
\newcommand{\Tbb}{\mathbb{T}}
\newcommand{\Zbb}{\mathbb{Z}}
\DeclareMathOperator{\dom}{dom}
\DeclareMathOperator{\ran}{ran}
\newcommand{\ee}{\mathrm{e}}
\newcommand{\ii}{\mathrm{i}}
\newcommand{\dd}{\;\mathrm{d}}
\newcommand{\DD}{\mathbb{D}}
\newcommand{\R}{\mathbb{R}}
\newcommand{\C}{\mathbb{C}}
\newcommand{\Z}{\mathbb{Z}}
\newcommand{\loc}{\mathrm{loc}}
\def\XXint#1#2#3{{\setbox0=\hbox{$#1{#2#3}{\int}$}
\vcentre{\hbox{$#2#3$}}\kern-.5\wd0}}
\renewcommand{\>}{\rangle}
\newcommand{\Df}{\mathfrak{D}}
\newcommand{\Hf}{\mathfrak{H}}
\newcommand{\hf}{\mathfrak{h}}
\newcommand{\Pf}{\mathfrak{P}}
\newcommand{\crm}{{\scriptscriptstyle \Cbb}}
\newcommand{\rrm}{{\scriptscriptstyle \Rbb}}
\newcommand{\al}{\alpha}
\newcommand{\ga}{\gamma} \newcommand{\Ga}{\Gamma}
\newcommand{\de}{\delta} 
\newcommand{\ep}{\epsilon}
\newcommand{\ka}{\kappa} 
\newcommand{\la}{\lambda} \newcommand{\La}{\Lambda}
\newcommand{\vphi}{\varphi}
\newcommand{\te}{\theta}
\renewcommand{\th}{\theta}
\newcommand{\Si}{\Sigma} \newcommand{\si}{\sigma}
\newcommand{\om}{\omega} \newcommand{\Om}{\Omega}
\newcommand{\one}{\mathbb{1}}
\newcommand{\zero}{\mathbb{0}}
\newcommand{\<}{\langle}
\newcommand{\pa}{\partial}
\newcommand{\wt}{\widetilde} 
\newcommand{\wh}{\widehat}
\newcommand{\rla}{\mathfrak{r}}
\newcommand{\vb}{\mathbf{v}}
\newcommand{\RR}{\mathbb{R}}
\newcommand{\Bo}{B}
\DeclareMathOperator{\Gr}{Gr}
\DeclareMathOperator{\sign}{sign}
\renewcommand{\Re}{\operatorname{Re}}
\renewcommand{\Im}{\operatorname{Im}}
\newcommand{\re}{\operatorname{Re}}
\newcommand{\Hyp}{\mathrm{Hyp}}
\newcommand{\dotL}{\accentset{\bullet}{L}}
\newcommand{\dotH}{\accentset{\bullet}{\Hcal}}
\newcommand{\Hsec}{\Hbb}
\newcommand{\rregH}{{\rla\text{-}\mathrm{reg}}}
\newcommand{\Gtwo}{E_2}
\begin{document}

\title{Multi-parameter Hopf bifurcations\\ of rimming flows}

\author{\large Illya M. Karabash$^{\,1,*}$, Christina Lienstromberg$^{\,2}$, \\ and Juan J. L. Vel\'{a}zquez$^{\,1}$}
%\address{Institute of Applied Mathematics, University of Bonn, Endenicher Allee~60, 53115 Bonn, Germany; \linebreak
%Institute for Analysis, Dynamics and Modeling, University of Stuttgart, Pfaffenwaldring 57, Stuttgart, Germany}

\date{}

\maketitle

\begin{abstract}
 In order to investigate the emergence of periodic oscillations of rimming flows, we study  analytically the stability of steady states for the model of (Benilov,  Kopteva, O'Brien, 2005), which describes the dynamics of a thin fluid film coating the inner wall of a rotating cylinder and includes effects of surface tension, gravity, and hydrostatic pressure. We apply multi-parameter perturbation methods to eigenvalues of Fréchet derivatives and prove  the transition of a pair of conjugate  eigenvalues from the stable to the unstable complex half-plane under appropriate variations of parameters. In order to prove  rigorously the corresponding branching of periodic solutions from critical equilibria, we extend the multi-parameter Hopf-bifurcation theory to the case of  infinite-dimensional dynamical systems.
\end{abstract}

%{ \noindent{}Date: \today \vspace{2ex}}
%\bigskip

 \footnotetext[1]{Institute of Applied Mathematics, University of Bonn, Endenicher Allee~60, 53115 Bonn, Germany. E-mails: karabash@iam.uni-bonn.de, velazquez@iam.uni-bonn.de}
 \footnotetext[2]{Institute for Analysis, Dynamics and Modeling, University of Stuttgart, Pfaffenwaldring 57, Stuttgart, Germany. E-mail:  Christina.Lienstromberg@mathematik.uni-stuttgart.de}
\unmarkedfntext{\!\!*Corresponding author. E-mail:karabash@iam.uni-bonn.de}

{\small
\noindent\textsc{MSC2020}: 35B32 (primary) 76A20, 37L10, 76U05, 35B10, 35K55, 35K25 (secondary)
\medskip

\noindent\textsc{Keywords}:  rimming flow, thin-film equation, multi-parameter Hopf-bifurcation, cycle, stability, steady states, periodic solutions, Puiseux series, multi-parameter perturbations
\medskip

\noindent\textsc{Acknowledgement.} 
The authors were supported by the CRC 1060 `The mathematics of emergent effects' at the University of
Bonn, which is funded through the German Science Foundation (DFG, Deutsche Forschungsgemeinschaft), 
and by the Hausdorff Center for Mathematics funded by the the German Science Foundation  under Germany's Excellence Strategy – EXC-2047/1 – 390685813. 
I.Karabash is supported by the Heisenberg Programme (project 509859995) of the German Science Foundation.
}

%------------------------------------------------
%------------------------------------------------

\section{Introduction}

%------------------------------------------------
%------------------------------------------------

%In this paper, we study the long-time behavior of solutions to \eqref{eq:PDE} that are initially close to a steady state by means of a multi-parameter Hopf-bifurcation analysis. 
%More precisely, we are interested in the branching of a curve of circles (periodic solutions) at the point of passage from stable to unstable steady states.
% we describe transitions from a stable to an unstable behavior, depending on the interplay of the involved physical parameters. 

The problem of the dynamics of a liquid film on the surface a rotating horizontal cylinder  in a vertical gravity field has originated  from the studies of the behavior of
steam condensate in paper machine dryers \cite{ST11} and has a number of contemporary industrial applications including spin coating, spin casting, and rotational molding technologies \cite{LTH18}.
The considerable  interest  to this kind of systems is also connected with availability of a variety of experimental configurations \cite{ESR04}.  

A liquid film is called a rimming flow if the fluid is on the inside of a cylinder, and a coating flow if it is outside. Most of the mathematical studies of rimming and coating flows have been done for  the 2-dimensional geometry, where the flow is independent of the axial variable.
Works of Pukhnachev and Moffatt in 1970s  on the thin film limit for coating flows led to a great amount of research on the lubrication approximations derived from the governing Navier–Stokes equations, see \cite{BO05,BKO05,P05,K07,BBK08,LTH18} and references therein. The leading part of the resulting evolution equation is in the class of 4th order nonlinear PDEs, where a rigorous theory of weak solutions has been developing intensively starting from the works of Bernis and Friedman (in the context of rimming flows, see the references and discussions in \cite{CPT10,BCS12}). We would like to note that the lubrication
approximation has been used and studied extensively for a variety of fluid mechanics models  \cite{OO95,GO03,GKO08,GP08,LPV20,PV20}, and that, without the lubrication approximation, the exact Stokes equations for rimming flow and emergence of shocks were investigated numerically in \cite{BLO12}.

The main mathematical interest to the specifics of thin film models for rimming flows is motivated by a variety of nonlinear  phenomena  and  by several open questions \cite{CPT10,BCS12,CP14,LTH18}. 
For the cases where the leading order approximation has various additional differential terms (most important, the smoothing surface tension term), steady state solutions  were considered in \cite{K07,BBK08} for certain limiting regimes by numerical and approximation methods. Analytic studies of steady states (in particular, nonexistence theorems) go back to Pukhnachev and were continued  in \cite{CPT10}, where additionally the regularity of generalized weak solutions was investigated, as well as the short-time and long-time existence. The stability of steady states was intensively studied by approximation, numerical, and linearization methods, see  \cite{BOS03,BO05,BC11} and referenced therein.  
In the zero surface tension limit, the question on spectral effects behind unusual instability and 
ill-posedness effects raised in the paper \cite{BOS03} led to an intensive spectral analysis of nonselfadjoint differential operators appearing after taking of the Fréchet derivative at a steady state,
see \cite{D07,CKP09,CV09,DW10,BLM10} and references therein.
It was shown in \cite{CP14} that in this case the ill-posedness is also a feature of the original nonlinear equation, and so, is not an artefact of the linearization.

This paper investigates analytically the long-time behavior of solutions 
to the rimming-flow model  
\begin{align} \label{eq:PDE}
	& \pa_t h +
	\pa_\theta \left( \left(h - \frac{\ep_1 \cos \theta}{3}  h^3 \right)
	+
	b h^3 \left(\pa_\theta h + \pa_\theta^3 h\right)
	+
	\ep_2 \sin(\theta) h^3 \partial_\theta h \right)
	=
	0,
		\\
	& h(0,\theta) = h_0(\theta),
	%\quad \theta \in S^1,
	%\quad  h \in \Hcal^4 (S^1), 
	\label{e:IC}
\end{align}
with $2\pi$-periodic in $\theta$ boundary conditions. Here $t > 0$, $\theta \in S^1$, and the circle $S^1 = \Rbb \ (\text{mod}\, 2\pi)$ is systematically identified with the interval $[0,2\pi)$.

 Namely, we study positive steady states, their stability, and periodic in $t$ solutions to
 \eqref{eq:PDE}.
Equation \eqref{eq:PDE} can be obtained by means of suitable rescaling of the lubrication approximation model of Benilov,  Kopteva, and O'Brien \cite{BKO05}, which describes the evolution of Newtonian fluid coating the inner surface of a rotating horizontal cylinder in the  thin layer limit and was designed to takes into account the effects of viscosity, gravity, surface tension, and hydrostatic pressure.

The function  $h=h(t,\theta) > 0$ is, roughly speaking, the nondimensional thickness of the thin film of fluid. The term $\pa_\th h$ in \eqref{eq:PDE} 
corresponds to the motion of the fluid due to the rotation of the cylinder. 
%describes effects due to viscosity. 
The term $\pa_\theta \left( \frac{\ep_1 \cos \theta}{3}  h^3 \right)$ 
 describes effects of the gravity. The term $\pa_\th\left( b h^3(\partial_\theta h + \partial_\theta^3 h)\right)$ accounts for the effect of the surface tension. Finally, the term $\pa_\th \left( \ep_2 \sin(\theta) h^3 \partial_\theta h \right)$ gives a correction corresponding to differences of the hydrostatic pressure at different parts of the fluid. 
 %Moreover, $\ep_1 > 0$ and $\ep_2 > 0$ are assumed to be of the same order.

Observe that \eqref{eq:PDE} is a quasi-linear problem of fourth order, which is degenerate para\-bolic in the sense that it lacks uniform parabolicity if $h$ vanishes at some points.
For small positive values of parameters  $\ep_1$ and $\ep_2$ the equation \eqref{eq:PDE}
 has a unique steady state $ H (\th; \ep_1,\ep_2)$ close to the constant solution $H_0 \equiv 1$, which the model has in the case $\ep_1=\ep_2 = 0$ (see Theorem \ref{thm:solution_ODE}).

The main points of the present paper are:
\begin{itemize}
\item We study analytically the stability properties of the stationary solutions $H (\cdot; \ep_1,\ep_2)$ to \eqref{eq:PDE}, see Theorems \ref{t:AHyp}--\ref{t:HB} and Sections 
\ref{ss:PertLa}--\ref{s:Stability}. These solutions  are obtained in Theorem \ref{thm:solution_ODE} by the application of the implicit function theorem around the explicit steady state $H_0(\theta;0,0)=\one$ corresponding to the unperturbed case $(\ep_1,\ep_2)=(0,0)$.

\item We prove rigorously that, for suitable choices of small $\ep_1$ and $\ep_2$, there exists a bifurcation of Poincaré--Andronov--Hopf type, i.e., an onset of a family of periodic solutions branching from the equilibrium $H (\cdot; \ep_1,\ep_2)$. The pairs of values of  $\ep_1$ and $\ep_2$, where the bifurcations take place, form an analytic curve in the parameter space, see Theorems \ref{t:HB} and \ref{t:n=2Det<0}.
\item In order to obtain the bifurcation result we employ the theory of multi-parameter Hopf-bifurcations, which is extended to infinite-dimensional dynamical systems in Section \ref{s:mpHopfB}. 
\item The analysis of positions of eigenvalues for the linearization of \eqref{eq:PDE} is performed with the use of the multi-parameter perturbation theory of eigenvalues, see Appendix \ref{a:A}.
\end{itemize}

 To finalize the passage to the analytical formulations and proofs, let us describe in more detail the background of the paper  \cite{BKO05}, which is our starting point, and the transition from the equations of \cite{BKO05} to the equation \eqref{eq:PDE}.

In \cite{BKO05} the dynamics of a rimming flow was studied for the model 
\begin{equation} \label{eq:PDE_unscaled}	
	%\begin{cases}
		\partial_{t} \tilde{h}+ 
		\partial_\theta\Bigl(
		\bigl(\tilde{h} - \frac{1}{3} \tilde{h}^3 \cos(\theta)\bigr)
		+
		\frac{\beta}{3} \tilde{h}^3 \left(\partial_\theta \tilde{h} + \partial_\theta^3 \tilde{h}\right)
		+
		\frac{\gamma}{3} \sin(\theta) \tilde{h}^3 \partial_\theta \tilde{h}
		\Bigr) = 0, 
	%\quad %	& 		t > 0,\ \theta \in S^1, 
		%\\
		%\tilde{h}(0,\theta) = \tilde{h}_0(\theta), 
		%\quad \theta \in S^1,
		%&
	%\end{cases} 
\end{equation}
which is a simplified version of the nondimensional equation derived by Benilov and O'Brien in \cite{BO05} from the Navier--Stokes system by means of the  lubrication approximation. The model of \cite{BO05} contains more terms that give  for certain ranges of parameters additional information about the physics of the problem, specifically, about the fluid's inertia. In \cite{BKO05}, equation \eqref{eq:PDE_unscaled} was studied  as a version of the model of \cite{BO05} with the inertia terms ignored, and the stability and instability of the related steady states was examined by numerical computations of the spectrum for the linearized  equation.

The model of \cite{BO05,BKO05} describes the evolution of a thin layer of a Newtonian fluid coating the inner wall of a horizontal cylinder of radius $R > 0$. The cylinder rotates counterclockwise at fixed angular velocity $\omega > 0$ around a horizontal axis. The polar coordinates $(r,\theta)$ with the origin being located in the center of the cylinder are used to describe spatial positions.
It is assumed that the fluid has constant density $\rho > 0$, constant kinematic viscosity $\mu > 0$, and constant surface tension $\sigma > 0$.  We denote as $g > 0$ the gravity acceleration.

The nondimensionalization 
introduces the function 
$
\tilde{h} =  \frac{\wh h}{\gamma R} - \frac{\wh h^2}{2 \gamma R^2},
$
where $\wh h$ is the  height of the fluid layer and 
$
	\gamma = \sqrt{\frac{\mu \omega}{g R}}.
$
In the lubrication approximation limit, $\wh h$ is much smaller than $R$. So the function $\wt h$, which solves the equation \eqref{eq:PDE_unscaled},  is essentially the nondimensional height of the liquid film. 
%We call the values of function $\wt h$ and  the corresponding steady state $\wt H$ of (\ref{eq:PDE_unscaled}) the modified height of the film.

The 
first two terms $(\tilde{h} - \tfrac{1}{3}\tilde{h}^3 \cos(\theta))$ in the (nondimensional) mass flux 
in \eqref{eq:PDE_unscaled} describe the leading order effects due to viscosity and gravity.  The term with the parameter $\gamma$ corresponds to hydrostatic pressure.
The term containing the parameter $	\beta = \frac{\sigma}{\rho g R^2} \sqrt{\frac{\mu \omega}{g R}}$ describes effects caused by surface tension.

Steady states of \eqref{eq:PDE_unscaled} are solutions $\wt{h}(t,\theta) = \wt{H}(\theta)$ to the ordinary differential equation
\begin{equation} \label{eq:ODE_unscaled}
	\bigl(\wt H - \tfrac{\cos \theta }{3} \wt H^3\bigr) 
	+ \tfrac{\beta}{3} \wt H^3 \bigl(\wt H' +  \wt  H'''\bigr) 
	+ \tfrac{\ga \sin \theta}{3} \wt H^3 \wt H' 
	= 
	\de,
	\quad
	\theta \in S^1,
\end{equation}
with periodic boundary conditions, where $\delta \in \R$ is the constant of integration and $f'$ means $\pa_\theta f$. Physically, $\delta$ is the nondimensional mass flux.

%For sufficiently small free parameter  $\delta \in \R$, the steady state solutions  
%can be parametrized by $\delta$,  see Section \ref{s:steady} 
%for rigorous application of the implicit function theorem 
%[seems to be difficult to justify, but can be done for the rescaled steady states $H$].}
 
 In this paper our attention is concentrated on bifurcations of the Poincaré--Andronov--Hopf type
 (for the the main definitions of the bifurcation theory we refer to \cite{DvGLW12,K04,K95,L12}, 
 see also Section \ref{s:mpHopfB} below). 
 To avoid additional technical difficulties connected with the  degeneration of the leading term of the equation, 
 we consider a specific scaling, which will modify equation \eqref{eq:PDE_unscaled} to \eqref{eq:PDE}. 
Namely, let us assume that $\delta > 0$ is positive, but small. 
We are interested in steady states of \eqref{eq:PDE_unscaled} for which the modified height $\wt H$ of the fluid film is of the same order as the small parameter $\delta > 0$. 
In this case, we find that the term $\tfrac{1}{3} \cos(\theta) \wt{H}^3$, 
reflecting the gravitational effects, is of the same order of magnitude as $\delta^2 \wt{H}$.
Then, we choose the parameters $\beta$ and $\gamma$ in \eqref{eq:PDE_unscaled} such that: (a)
$
	\tfrac{\beta}{3} \wt{H}^3 (\partial_\theta \wt{H} + \partial_\theta^3 \wt{H}) $
is of the same order as $\wt{H}$, (b)
the terms \ $
	\tfrac{\gamma}{3} \sin(\theta) \wt{H}^3 \partial_\theta \wt{H} 
	$  \
	and \
	$\tfrac{1}{3} \cos(\theta) \wt{H}^3$ \ are of the same order of magnitude,
and (c) the terms due to surface tension and viscosity dominate those due to hydrostatic pressure and gravity. By rescaling equations \eqref{eq:ODE_unscaled} and \eqref{eq:PDE_unscaled} via 
%\begin{equation*}
	\ $h(t,\theta) = \frac{\wt{h}(t,\theta)}{\de}$,
	\
	$H(\theta) = \frac{\wt{H}(\theta)}{\de}$, %	\quad \text{as well as} \quad
	as well as
	\ $b = \frac{\beta \delta^3}{3}$,
	\ 
	$\ep_1 = \de^2$, \
	$\ep_2 = \frac{\ga \de^3}{3}$,
%\end{equation*}
we obtain the ordinary differential equation for the rescaled steady states 
\begin{equation*} %\label{eq:ODE0}
\left(H - \tfrac{\ep_1\cos \theta}{3}  H^3\right)
+
b H^3 \left(H' + H'''\right)
+
\ep_2 \sin(\theta) H^3 H'
= 
1, 
\quad \theta \in S^1,
\end{equation*}
and, respectively, we obtain the corresponding evolution equation \eqref{eq:PDE}, which is the main object of the present paper.
We assume $b > 0$ to be of order one, while $\ep_1, \ep_2$ 
are assumed to be small.

We are interested in the branching of periodic solutions at bifurcation points 
$\ep=(\ep_1,\ep_2)$ marking the passage from stable to unstable steady-states. 
Such bifurcation points $\ep=(\ep_1,\ep_2)$ are spectrally critical in the sense that a conjugate pair $\{\la^+,\la^-\}$ of nonzero eigenvalues of the linearized operator is situated on the pure imaginary line $\ii \Rbb = \{z \in \Cbb \ : \ \Re z = 0\}$ and crosses this line under specific small variations of $\ep$. This effect is usually associated with the variety of bifurcations of  Poincaré--Andronov--Hopf type that distinguish how the  eigenvalues $\la^\pm$ cross $\ii \Rbb$ (tangentially or nontangentially), positions of other eigenvalues, and various types of degeneracy, e.g., connected with higher multiplicities of eigenvalues.
In the context of multi-parameter bifurcations of finite-dimensional dynamical systems and retarded systems of ODEs we refer to \cite{C83,F86,I85,DK13} and to the detailed descriptions of various types of  the Poincaré--Andronov--Hopf bifurcations  in the monograph \cite{K95}.

The case, where the dynamical system is infinite-dimensional, brings a number of additional effects into the bifurcation theory, in particular, because of the necessity to trace the behavior of infinite number of eigenvalues of linearized operators. The study of bifurcations of the Poincaré--Andronov-Hopf type in this context seems to be initiated for the one-parameter case by Crandall and Rabinowitz \cite{CR77}.

Several types of Poincaré--Andronov-Hopf bifurcations for infinite-dimensio\-nal dynamical systems were described in monographs \cite{K04,DvGLW12,L12}. However, the settings of these monographs do not fit to specifics of the rimming-flow equations \eqref{eq:PDE_unscaled} and \eqref{eq:PDE}
by two reasons:
\begin{itemize}
\item The models of \cite{BO05,BKO05}, as well as equation \eqref{eq:PDE}, include several independent parameters, which makes it natural to consider them from the point of view of the multi-parameter bifurcation theory. Multi-parameter Hopf-bifurcations in the infinite-dimensional phase spaces are not fully understood yet (see the discussion in \cite{DK13}). 
\item In comparison with the standard spectral diagram for Poincaré--Andronov-Hopf bifurcations, the linearized operators for the dynamical systems \eqref{eq:PDE_unscaled} and \eqref{eq:PDE} have always an additional eigenvalue at zero, which corresponds to the mass conservation. This effect can be handled by the restriction of the dynamical system  to the invariant hyperplanes
\begin{equation} \label{e:HypC}
	\Hyp (C) = \{ h \in L^2(S^1): \ ( h | \one) = C\} \qquad \text{with $C > 0$, }
\end{equation}
where $( \cdot |\cdot)$ is the $L^2(S^1)$-inner product, and $\one$ is the constant function equal $1$ on the whole $S^1$.
This removes the zero eigenvalue, but brings into \eqref{eq:PDE} an additional parameter.
% , which is, roughly speaking, responsible for the variation of the hyperplane. 
\end{itemize}

In Section \ref{s:mpHopfB}, we develop an abstract version of the multi-parameter Hopf-bifurcation theory, which deals with infinite--dimensional dynamical systems and is sufficient for the specifics of the rimming-flow model under consideration. We show that the rimming-flow model \eqref{eq:PDE} fits to a specific singular, but relatively simple,  case when two bifurcation curves cross each other nontangentially (see Sections \ref{ss:singular} and \ref{s:rfHcurves}). This provides then the analytic basis for the computation of the tangent vector of physically relevant branch of the Hopf-bifurcation locus in the the first quadrant $\{\ep_1>0,\ep_2>0\}$ of the $\ep$-parameter plane (see Theorem \ref{t:AHyp}).

\textbf{Notation.}
By $\Xcal$ and $\Ycal$ we denote general Banach spaces over $\Rbb$ or $\Cbb$. By $\Lcal(\Xcal,\Ycal)$ we denote the Banach space of bounded (linear) operators $T\colon\Xcal \to \Ycal$. In particular,  $\Lcal (\Xcal) := \Lcal(\Xcal,\Xcal)$. If $\Xcal \subset \Ycal$, we sometimes consider linear operators $T\colon\Xcal \to \Ycal$, as operators in the space $\Ycal$ with the \emph{domain} $\dom T = \Xcal$, see \cite{Kato}. The corresponding notation is $T\colon\Xcal \subset \Ycal \to \Ycal$.
By $\ker T = \{y \in \dom T \ : \ T y = 0\}$ the \emph{kernel} of a linear operator $T$ is denoted, and by 
$\ran T = \{Ty \ : \ y \in \dom T \}$ its \emph{range}.
By $I=I_\Xcal$  we denote the identity operator on $\Xcal$. 

Inner products in real or complex Hilbert spaces $\Xcal$ are denoted by  $(\cdot | \cdot)_{\Xcal}$.
%$\Tbb = \{ e^{\ii \te} : \te \in \Rbb\} \subset \Cbb$, and let 
Moreover, we use $\Bbb_{r} (v_0) = \Bbb_{r}(v_0;\Xcal) = \{v \in \Xcal \ :\ \| v - v_0 \|_\Xcal < r \}$ for an open ball of radius $r > 0$ in $\Xcal$. In particular,  $\Dbb_r (z_0) := \Bbb_{r}(z_0;\Cbb)$  is an open complex disc. A \emph{neighborhood $\Ncal^x$ of a point} $x \in \Xcal$ is an open set containing $x$. To distinguish several different neighborhoods of $x$, we often index them with the subscript, i.e., $\Ncal^x_0$, $\Ncal^x_1$, \dots.
By $\Nbb = \{1,2,\dots\}$ the set of natural numbers is denoted, while $\Nbb_0 :=  \{0\} \cup \Nbb$. 

The notation $\< \cdot | \cdot  \>_{\Rbb^n}$ stands for the standard inner product in $\Rbb^n$, while 
 \[
 ( f|g)= \tfrac1{2\pi} \int_0^{2\pi} f \overline{g} \dd \theta
 \]
  is an inner product in the complex  Hilbert space $L^2_\crm (S^1) = L^2 (S^1;\Cbb) $. By $\sign(\cdot)$ we denote the standard signum function.

%By $f\uph_S$ we denote a restriction of a function to the intersection of the set $S$ 
%with the domain of definition $\dom f$ of $f$. 
%By $n$-dimensional manifolds we understand $n$-dimensional $C^k$-submanifolds 
%of Banach spaces with $k \in \Nbb \cup \{\infty\}$. 

\section{Main results of the paper}\label{s:MainRes}

The parameter $b > 0$ is fixed through the rest of the paper.
We start from the rigorous description of a 2-dimensional analytic surface of steady states of 
the rimming-flow equation 
\begin{align} \label{eq:PDE2}
	& \pa_t h +
	\pa_\theta \left( \left(h - \frac{\ep_1 \cos \theta}{3}  h^3 \right)
	+
	b h^3 \left(\pa_\theta h + \pa_\theta^3 h\right)
	+
	\ep_2 \sin(\theta) h^3 \partial_\theta h \right)
	=
	0,	
\end{align}
where $t > 0$ and $ \theta \in S^1$.
This equation is parametrized by  $\ep=(\ep_1,\ep_2)$ in a sufficiently small $\Rbb^2$-neighborhood of $\ep^0 = (0,0)$. As before, we identify the circle $S^1 = \Rbb \ (\text{mod}\, 2\pi)$ with the interval $[0,2\pi)$, i.e., solutions $h$ are $2\pi$-periodic in $\th$. (Recall that,  the physically relevant quadrant of  the parameter plane $\Rbb^2$   is 
$\{\ep_1\ge 0, \ep_2 \ge 0\}$.)

Steady-state solutions to \eqref{eq:PDE2} are the time independent solutions $h(t,\theta) = H(\theta) = H (\theta; \ep_1,\ep_2)$ to the ordinary differential equation
\begin{equation} \label{eq:ODE}
\left(H - \tfrac{\ep_1\cos \theta}{3}  H^3\right)
+
b H^3 \left(H' + H'''\right)
+
\ep_2 \sin(\theta) H^3 H'
= 
1, 
\quad \theta \in S^1,
\end{equation}
where $H'$ stands for $\pa_\th H$ and  $H^j  = \left( H (\theta;\ep)\right)^j$ denotes the  $j$-th power of $H (\theta;\ep)$. 
Note that in the case $\ep=(0,0)$, the constant function $H(\theta) \equiv \one$ is an explicit stationary solution to \eqref{eq:PDE2} satisfying \eqref{eq:ODE}.

%Moreover, again for fixed $b > 0$, the implicit function theorem implies that for $\ep > 0$ small enough, the ordinary differential equation \eqref{eq:ODE_steady} admits a unique solution $H_\ep = H (\cdot;\ep)$ which is close to $H_0 \equiv \one$ in $\Hcal^4(S^1)$. 

%\begin{equation} \
%	h(0,\theta) = h_0(\theta),
%	\quad \theta \in S^1,
%\end{equation}

The following functional spaces and norms are used. We identify functions in the (real) Lebesgue spaces $L^2(S^1)$ 
with $2\pi$-periodic functions in $L^{2,\text{loc}}(\Rbb)$. 
The real Hilbert space $L^2(S^1)$ and its complex version $L^2_\crm (S^1)$ are equipped with the inner product 
$( f|g)= \tfrac1{2\pi} \int_0^{2\pi} f \overline{g} \dd \theta$.
The induced norm is denoted by $\| f \|_{L^2} = \sqrt{(f|f)}$. With this choice of the inner product,
$\{ \ee^{\ii n \theta}\}_{n\in \Zbb}$ is an orthonormal basis in $L^2_\crm(S^1)$. 
By 
%$\Wcal^{k,p}(S^1)$ and 
$\Hcal^{k}(S^{1}) = \Wcal^{k,2}(S^1) $ we denote the corresponding real Hilbertian Sobolev spaces; in particular, $\Hcal^{0}(S^{1})$ and $L^2(S^{1})$ coincide as linear spaces and have equivalent norms.
We can represent any $v\in \Hcal_\crm^{k}(S^{1})$ by its Fourier series
%\begin{equation*}\label{e:FSer}
$ v(\theta)=\sum_{n\in\Z} v_n e^{\ii n\theta}$, where the Fourier coefficients $v_n \in \Cbb$ are such that 
$ \| v \|_{\Hcal^{k}} = \left( \sum_{n\in\Z} |v_n|^2(n^{2k}+1)  \right)^{1/2} < \infty$. 
%This norm $\| \cdot \|_{\Hcal^{k}}$ generates an inner product in $\Hcal_\crm^{k} (S^1)$, and $\{ \ee^{\ii n \theta}\}_{n\in \Zbb}$ is an orthogonal basis $\Hcal_\crm^{k} (S^1)$ w.r.t. this inner product. 
Denoting 
\begin{gather*} \label{e:dotLDef}
\dotL^2_\crm(S^1):= \{u \in L^2_\crm (S^1)  :  \ (u|\one )=0 \} \quad \text{ and } \quad \dotL^2 (S^1):= L^2(S^1) \cap \dotL^2_\crm(S^1),
\end{gather*}
%where $\one$ is the constant function equal $1$ on the whole $S^1$,
one sees that $\dotL^2_{(\crm)} (S^1)$ is a closed subspace of $L^2_{(\crm)}(S^1)$. 
More generally, for $ s \in \Nbb \cap \{0\}$, 
 the closed subspaces $\dotH^s_{(\crm)} (S^1) :=  \Hcal^s_{(\crm)}(S^1) \cap \dotL^2_\crm(S^1)$ of $\Hcal^s_{(\crm)}(S^1)$,  are real (respectively, complex) Hilbert spaces.

\begin{theorem}\label{thm:solution_ODE}
There exist constants $\de_0, \de_1>0$ such that the following statements hold for all $\ep=(\ep_1,\ep_2)$ in the neighborhood $\Bbb_{\de_0} (0;\Rbb^2)$:
\begin{itemize}
\item[(i)]  the steady state equation \eqref{eq:ODE} admits a unique solution $H(\cdot) = H(\cdot;\ep)$ satisfying 
%\begin{equation*}
 \ $\|H(\cdot;\ep) - \one\|_{\Hcal^4(S^1)} < \de_1 $;  % \label{e:|H-1|}
%\end{equation*}
%Moreover, the constants $\de_0$ and $\de_1$ can be chosen such that the following statements hold: 
	\item[(ii)] the $\Hcal^4(S^1)$-valued map $\ep \mapsto H (\cdot;\ep)$ is real analytic in 
	$\ep$ in $\Bbb_{\de_0} (0;\Rbb^2)$;
	\item[(iii)]	$\min\limits_{\theta \in S^1} |H (\theta;\ep)| >0 $;
	%$\pa_{\ep_j } ( \ep_1^{1/2} H \vert \one ) > 0$  
% for all $\ep \in \{ (\ep_1,\ep_2) \in \Bbb_{\de_0} (0;\Rbb^2)\ : \  \ep_1 \ge 0, \ep_2 \ge 0\}$;
% for $j=1,2$;
	\item[(iv)] the first terms of the Maclaurin  series of the $\Hcal^4(S^1)$-valued map $\ep \mapsto H (\cdot;\ep)$ are given by the asymptotic formula 
	%\[ 	\text{
	\ $  H(\theta;\ep) = \one + \frac{\ep_1}{3} \cos(\theta) + O(|\ep|^2) $ \quad as $\ep\to 0$.
%	} 	\]
\end{itemize}
\end{theorem}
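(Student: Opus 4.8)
The plan is to set up equation \eqref{eq:ODE} as an analytic operator equation $F(H,\ep)=0$ on the Sobolev scale and apply the analytic implicit function theorem at the explicit solution $(H,\ep)=(\one,(0,0))$. Concretely, define $F\colon \Hcal^4(S^1)\times\Bbb_{\de_0}(0;\Rbb^2)\to \Hcal^0(S^1)$ by
\begin{equation*}
F(H,\ep) := \left(H - \tfrac{\ep_1\cos\theta}{3}H^3\right) + bH^3\left(H'+H'''\right) + \ep_2\sin(\theta)H^3H' - \one .
\end{equation*}
The first task is to check that $F$ is well defined and real analytic. The map $H\mapsto H^3$ is analytic $\Hcal^4\to\Hcal^4$ because $\Hcal^4(S^1)$ is a Banach algebra (recall $S^1$ is one-dimensional, so $\Hcal^4\embed C^3$), and multiplication by the fixed smooth functions $\cos\theta$, $\sin\theta$ preserves each $\Hcal^s$. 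The term $H'+H'''$ maps $\Hcal^4\to\Hcal^1\subset\Hcal^0$, and $\Hcal^4\cdot\Hcal^1\subset\Hcal^1\subset\Hcal^0$ since $\Hcal^4$ multipliers are bounded on $\Hcal^1$. All operations are polynomial in $H$ composed with bounded linear maps and fixed multipliers, hence $F$ is a (continuous) polynomial in $(H,\ep)$, so in particular real analytic. One also notes $F(\one,(0,0))=\one+0+0+0-\one=0$.

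The second task is to compute the partial Fréchet derivative $D_H F(\one,(0,0))\colon \Hcal^4(S^1)\to\Hcal^0(S^1)$ and show it is an isomorphism. Linearizing at $H=\one$, $\ep=0$, only the terms $H + bH^3(H'+H''')$ survive (the $\ep$-dependent terms vanish to first order since they carry a factor $\ep_i$), giving
\begin{equation*}
D_H F(\one,(0,0))\,v = v + b\left(v'+v'''\right) .
\end{equation*}
On the Fourier side this is the Fourier multiplier $v_n\mapsto \bigl(1 + b(\ii n - \ii n^3)\bigr)v_n$. Since $b>0$ and $n-n^3\in\Z$, the symbol $1 + \ii b(n-n^3)$ is never zero (its real part is $1$), its modulus grows like $|n|^3$, and $1/(1+\ii b(n-n^3))$ together with all its relevant ``orders'' stays bounded; hence $D_H F(\one,(0,0))$ is a bounded bijection from $\Hcal^4$ onto $\Hcal^0$ with bounded inverse. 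The analytic implicit function theorem (e.g.\ the version in Deimling or Berger, for analytic maps between Banach spaces) then yields $\de_0,\de_1>0$, a unique $\Hcal^4$-valued real-analytic branch $\ep\mapsto H(\cdot;\ep)$ with $H(\cdot;(0,0))=\one$ and $\|H(\cdot;\ep)-\one\|_{\Hcal^4}<\de_1$, and uniqueness within that ball; this gives (i) and (ii). For (iii): $H(\cdot;\ep)\to\one$ in $\Hcal^4\embed C^0$ as $\ep\to 0$, so shrinking $\de_0$ makes $H(\theta;\ep)\ge 1/2$ uniformly, whence $\min_\theta|H(\theta;\ep)|>0$. For (iv): differentiate the identity $F(H(\cdot;\ep),\ep)\equiv 0$ once in $\ep_1$ and once in $\ep_2$ at $\ep=0$. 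Writing $H_{\ep_1}:=\pa_{\ep_1}H|_{\ep=0}$ etc., the chain rule gives $D_H F(\one,0)H_{\ep_1} = -\pa_{\ep_1}F(\one,0) = \tfrac{\cos\theta}{3}$ and $D_H F(\one,0)H_{\ep_2} = -\pa_{\ep_2}F(\one,0) = -\sin(\theta)\cdot\one'=0$. Solving the first via the Fourier multiplier above with $n=\pm1$ (where $1+\ii b(n-n^3)=1$) gives $H_{\ep_1}=\tfrac13\cos\theta$, and the second gives $H_{\ep_2}=0$; Taylor's theorem for analytic maps then yields $H(\theta;\ep)=\one+\tfrac{\ep_1}{3}\cos\theta+O(|\ep|^2)$.

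The only genuinely delicate point is verifying cleanly that $D_H F(\one,0)$ is surjective onto $\Hcal^0$ with bounded inverse and, more importantly, that the full nonlinear map $F$ really lands in $\Hcal^0$ with analytic dependence despite the loss of three derivatives in the surface-tension term $bH^3(H'+H''')$ — one must be careful that the product $H^3\cdot H'''$ is handled by the algebra/multiplier estimate $\Hcal^4\cdot\Hcal^1\subset\Hcal^0$ rather than naively, and that the bilinear multiplication maps involved are bounded (hence smooth). Everything else is a routine application of the analytic implicit function theorem and Fourier-multiplier bookkeeping; the fact that the critical Fourier modes $n=\pm1$ sit exactly where the symbol equals $1$ is what makes the coefficient in (iv) come out to the clean value $1/3$.
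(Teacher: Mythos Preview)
Your overall strategy matches the paper's: set up the steady-state equation as $F(H,\ep)=0$, check analyticity, compute the linearization at $(\one,0)$ as the Fourier multiplier $v\mapsto v+b(v'+v''')$, and invoke the analytic implicit function theorem. The computation of the first-order Taylor coefficients via implicit differentiation is also correct and agrees with the paper's direct substitution method.

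There is, however, a genuine gap in your choice of codomain. You take $F\colon\Hcal^4(S^1)\to\Hcal^0(S^1)$ and claim that $D_HF(\one,0)$ is a bijection from $\Hcal^4$ onto $\Hcal^0$. This is false. As you yourself observe, the symbol $1+\ii b(n-n^3)$ has modulus growing like $|n|^3$, not $|n|^4$; consequently the multiplier is an isomorphism $\Hcal^s\to\Hcal^{s-3}$, i.e.\ $\Hcal^4\to\Hcal^1$, and its range in $\Hcal^0$ is only the proper closed subspace $\Hcal^1$. With the codomain $\Hcal^0$ the derivative is injective but not surjective, so the implicit function theorem does not apply as stated.

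The fix is immediate and is precisely what the paper does: define $F\colon\Hcal^4(S^1)\times\Rbb^2\to\Hcal^1(S^1)$ (the paper works more generally with $\Hcal^s\to\Hcal^{s-3}$ for $s\ge 3$). Every term of $F$ lands in $\Hcal^1$---the worst is $bH^3(H'+H''')$, which is in $\Hcal^4\cdot\Hcal^1\subset\Hcal^1$ by the multiplier property you already invoked---and with this codomain the linearization $G=I+B$ is a genuine isomorphism $\Hcal^4\to\Hcal^1$. Once this is corrected, the remainder of your argument (existence, uniqueness, analyticity, positivity via the embedding $\Hcal^4\hookrightarrow C^0$, and the first-order expansion) goes through unchanged.
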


The proof is given in Section \ref{s:steady} with the use of the implicit function theorem for analytic maps applied around $\ep^0 = (0,0)$ and $H(\theta;\ep^0)=H_0 \equiv \one$. 

\begin{remark}
The space $\Hcal^4 (S^1)$ can be replaced in Theorem \ref{thm:solution_ODE} by any space $\Hcal^s (S^1)$ with $s \ge 3$, see Proposition \ref{p:CsolutionODE}.
\end{remark}

The quantity $(\ep_1^{1/2} h | \one) = (\de h | \one) = (\wt h | \one)$ corresponds to the conserved total mass in the evolution equation \eqref{eq:PDE_unscaled}.
Mathematically, the fact that hyperplanes $\Hyp (C) = \{ h \in L^2(S^1): \ ( h | \one) = C\}$ are invariant sets of \eqref{eq:PDE2} is easily seen in the following way. We write \eqref{eq:PDE2} as the following dynamical system in the Hilbert space $L^2(S^1)$:
\begin{gather} \label{eq:f}            
\pa_t h  = f(h;\ep)  \qquad \text{ with \ $ f (h;\ep) = -\pa_\th f_1 (h;\ep)$ \ and } \\
%h(0;\ep) = h_0
\text{ $ f_1 (h;\ep) =   \left(h - \frac{\ep_1 \cos \theta }{3}  h^3 \right)
        + b h^3 \left(\pa_\theta h + \pa_\theta^3 h\right) + \ep_2 \sin(\theta) h^3 \pa_\theta h $,} \label{eq:f1}         
\end{gather}
where $\ep$ is an $\Rbb^2$-parameter. 
One sees  that 
$   \pa_t(  h | \one ) = 
%( f(h;\ep) | \one ) =
( f_1 (h;\ep) | \pa_\theta \one ) =0$. 
Hence, if $C= (h_0|\one)$ for the initial condition 
\begin{align} \label{e:h0}
h |_{t=0} = h_0 ,
\end{align}
then a solution $h$ to \eqref{eq:f}-\eqref{e:h0} satisfies 
$(h(t)|\one) = C$ for all $t \ge 0$. Here and below we perceive $f$ as a real analytic map from an (open) $\Hcal^4(S^1) \times \RR^2$-neighborhood of $(H_0,\ep^0) =  (\one; \ep^0)$ to  $L^2(S^1)$ and treat solutions $h$ to the initial value problem \eqref{eq:f}-\eqref{e:h0} as 
strict \ $C^1([0,t_0);L^2 (S^1)) \cap C([0,t_0) ;\Hcal^4(S^1))$-solutions (we use here the settings of \cite{L12}).
%In some of statements  later we impose additional regularity assumptions on solutions $h$.

It is not difficult to see from the form of the map $ f$ that, for $\ep$ is a vicinity of $\ep^0=(0,0)$,  the Fréchet derivatives $D_w f (w_0;\ep)$ of $f$ with respect to (w.r.t.) $w$ at a steady state $w_0 (\cdot) = H(\cdot;\ep)$ has an eigenvalue at $0$.
 %(see Remark \ref{r:0eig}). 
 We need to remove this eigenvalue before we pass to the analysis of the stability and Hopf bifurcations. To this end, the dynamical system \eqref{eq:f} is considered as a family of dynamical systems in the invariant hyperplanes $\Hyp (C)$. 
For the purposes of the spectral analysis of Fréchet derivatives we also consider the complexified version of \eqref{eq:f} in the complex Hilbert space $L^2_\crm (S^1)$. 

Let us take an arbitrary fixed $\ep$ in the neighborhood $\Bbb_{\de_0} (0;\Rbb^2)$ described in Theorem \ref{thm:solution_ODE} and consider the corresponding steady state $H(\ep) = H (\cdot,\ep) $.
Then $H(\ep)  \in \Hyp (C(\ep))$ with $C(\ep)= \int_{S^1} H(\theta,\ep) \frac{\dd \theta}{2\pi}$.
Using the representation 
$
\Hyp (C(\ep)) = \{  H(\cdot;\ep) + v(\cdot) \ : \  v (\cdot) \in \dotL^2 (S^1)  \},
$
we write the restriction of system \eqref{eq:f} to the invariant hyperplane $\Hyp (C(\ep))$  as a dynamical system 
w.r.t. the state $u(t,\theta,\ep) := h(t,\theta,\ep)- H (\theta,\ep)$ in the complex Hilbert space 
$ \dotL^2_\crm(S^1) $   in the following way
\begin{gather} \label{e:PDEHyp2}
        \pa_t u =  F (u ; \ep) , \qquad \text{ where  $F (w ;\ep) = f (w+H(\ep);\ep)$}.             
\end{gather}
Then the identically zero function $\zero$ is a steady state of (\ref{e:PDEHyp2}) that corresponds to the steady state  
$H(\cdot,\ep)$ of the original equation (\ref{eq:PDE2}).

The complexification $F_\crm$ of the mapping $F$ can be defined by the same formula as $F$. However, for this purpose  $H (\ep)$ have to be understood as a holomorphic extension of the real analytic function $H (\cdot)$ from Theorem \ref{thm:solution_ODE}. Such a holomorphic extension to a certain $\Cbb^2$-neighborhood 
$\Bbb_{\de_2} (0;\Cbb^2)$ of $\ep^0=(0,0)$ with $\de_2 >0 $ can be obtained by means of the complex Maclaurin  series of $H (\cdot)$. 

The complexification of the Fréchet derivative $D_w F(\zero;\ep)$ is given by the operator 
\begin{equation} \label{e:Av}
	A(\zero;\ep) := D_w F_\crm(0;\ep) 
		  \in \Lcal\bigl(\dotH^4_\crm(S^1), \dotL^2_\crm(S^1)\bigr),
\end{equation}
 defined for $v\in \dotH^4_\crm(S^1)$ by $A(\zero;\ep)[v]  =  	- \pa_\theta Q_\ep[v] $, where
\begin{multline}
	Q_\ep [v] 
	=
	\bigl(v - \ep_1 \cos(\theta) H^2  v\bigr)  
	 +
	b\Bigl(3H^2  (\pa_\theta H  + \pa_\theta^3 H ) v + 
	H^3  (\pa_\theta v + \pa_\theta^3 v)\Bigr)  
	\\
		+
	\ep_2 \Bigl(3 \sin(\theta) H^2 v \pa_\theta H  + 
	\sin(\theta) H^3  \pa_\theta v\bigr).
	\label{e:A1}
\end{multline}

We call $A(\zero;\ep)$ the \emph{linearized operator} (corresponding to the 2-parameter $\ep$). To describe some of its spectral properties, the following notation and definitions of spectral theory are used (see \cite{Kato,RSIV}).
By $\DD_r (\zeta_0) = \{ \zeta \in \Cbb \ : \ |\zeta -\zeta_0|<r \}$ we denote a complex open disc of radius $r>0$ with the center at $\zeta_0 \in \Cbb$, and by $\Dbb_r^2 (\zeta_0) = (\Dbb_r (\zeta_0))^2$ the polydisc that is the direct product of two discs $\DD_r (\zeta_0)$.
For a linear operator $T\colon \dom T \subset \Ycal \to \Ycal$ in a complex Banach space $\Ycal$, we denote by $\dom T$ its domain (of definition), by $\sigma (T)$ its \emph{spectrum}, and by 
$\rho (T) = \Cbb \setminus \si (T)$ its \emph{resolvent set}. An eigenvalue $\la$ of $T$ is called \emph{isolated} if there exists a complex open neighborhood $\Dbb_r (\la)$ of $\la$ such that $\{\la\} = \sigma (T) \cap \Dbb_r (\la)$. An eigenvalue is called \emph{simple} if it has algebraic multiplicity $1$. 
Roughly speaking, the open left half-plane 
$\{\la \in \Cbb \ :\ \Re \la <0\}$ in the complex plane of the spectral parameter is associated with  the case of stability (see Theorem \ref{t:HB}).

The following theorem describes the behavior under variation of $\ep$ near  $\ep^0 = (0,0)$ of the pair of eigenvalues of the linearized operator which are closest to the purely imaginary axis $\ii \Rbb$.
% := \{\zeta \in \Cbb \ : \ \Re \zeta =0\}$.

\begin{theorem} \label{t:AHyp}
 For certain small enough $\de_3, \de_4,\de_5>0$, there exist two holomorphic functions
   $\la^\pm : \Dbb_{\de_3} (0) \times \Dbb_{\de_4} (0) \to \Cbb$
  % defined in the $\Cbb^2$-neighborhood $\Dbb_{\de_3} (0) \times \Dbb_{\de_4} (0)$ of $\ep^0 = (0,0)$
   and an increasing real analytic function 
  $\ep_1 \mapsto \Gtwo(\ep_1), \ \Gtwo  : (-\de_3,\de_3) \to \RR$, 
with the following properties:
\begin{itemize}
		\item[(i)]  For all $\ep \in (-\de_3,\de_3) \times (-\de_4,\de_4)$, the part $\{\la \in \si (A(\zero;\ep)) \ : \ - \de_5 < \Re \la \}$ of the spectrum of the linearized operator $A(\zero;\ep)$ 
		consists of exactly two simple isolated eigenvalues $\la^\pm (\ep)$ such that 
		\[
		\text{$\la^\pm (\ep^0) = \pm \ii$, \  $\Re \la^+ (\ep) = \Re \la^- (\ep)$, \ and \ $\Im \la^+ (\ep) = - \Im \la^- (\ep) >0$;}
		\]		
moreover, the operator $A(\zero;\ep)$ is a generator of quasi-bounded holomorphic semigroup (see  Remark \ref{r:sectorial}).
	\item[(ii)] For $\ep \in [0,\de_3) \times [0,\de_4)$,
\begin{align}
& \text{$\Re \la^\pm (\ep) = 0  $ \quad  is equivalent to \quad $\ep_2 = \Gtwo (\ep_1) $},\\
	& \text{$\Re \la^\pm (\ep) < 0 $ \quad  is equivalent to \quad $\  \ep_2 \  < \ \Gtwo (\ep_1)$,} \\
	& \text{$\Re \la^\pm (\ep) > 0  $ \quad is equivalent to \quad $\Gtwo (\ep_1) \  <  \ep_2 \    $}.
\end{align} 
	\item[(iii)] The leading terms of the Maclaurin series of $\Gtwo (\cdot)$ are given by 
	\begin{equation} \label{e:Ec2}
		\Gtwo (\ep_1) = \frac{3b}{2} \left(1 + \sqrt{1+4/b } \right) \ep_1 + O (\ep_1^2) 
		\qquad \text{as $\ep_1 \to 0$.}
	\end{equation} 
\end{itemize}
\end{theorem}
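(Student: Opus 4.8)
The plan is to realize the eigenvalue pair $\la^\pm(\ep)$ as a perturbation of the $\pm\ii$ eigenvalues of the unperturbed operator $A(\zero;\ep^0)$, and to analyze the real part via a Puiseux/Taylor expansion. First I would analyze the unperturbed linearized operator $A_0 := A(\zero;\ep^0)$. At $\ep=(0,0)$ we have $H\equiv\one$, so $Q_{\ep^0}[v] = v + b(\pa_\theta v + \pa_\theta^3 v)$ and $A_0[v] = -\pa_\theta v - b(\pa_\theta^2 v + \pa_\theta^4 v)$. On the Fourier basis $\ee^{\ii n\theta}$ of $\dotL^2_\crm(S^1)$ (so $n\in\Z\setminus\{0\}$), $A_0$ is diagonal with eigenvalues $\mu_n = -\ii n + b(n^2 - n^4) = -\ii n - b n^2(n^2-1)$. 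For $|n|\ge 2$ these have real part $-bn^2(n^2-1) \le -12b < 0$, while for $n=\pm 1$ we get $\mu_{\pm1} = \mp\ii$, lying exactly on $\ii\Rbb$. Thus $A_0$ has precisely two eigenvalues, $\pm\ii$, with nonnegative real part, both simple and isolated, and a spectral gap separating them from the rest of $\si(A_0)\subset\{\Re\la\le-12b\}$. Choosing $\de_5 \in (0,12b)$ and invoking analytic perturbation theory (Kato), for $\ep$ in a small polydisc the part of $\si(A(\zero;\ep))$ with $\Re\la > -\de_5$ consists of exactly two eigenvalues $\la^\pm(\ep)$, each depending holomorphically on $\ep$ since they are simple; $\la^\pm(\ep^0)=\pm\ii$. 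The sectoriality/holomorphic-semigroup claim follows because $A(\zero;\ep)$ is a lower-order perturbation of the fourth-order operator $-bH^3\pa_\theta^4$, which is sectorial on the periodic domain; this is the content deferred to Remark~\ref{r:sectorial} and to Appendix~\ref{a:A}.

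Next I would establish the symmetry $\Re\la^+ = \Re\la^-$ and $\Im\la^+ = -\Im\la^-$. The underlying equation \eqref{eq:PDE2} and the steady state $H(\theta;\ep)$ have a reflection symmetry: under $\theta\mapsto -\theta$, the terms $\cos\theta$ is even while $\sin\theta$ is odd and $\pa_\theta$ is odd, so one checks that $\theta\mapsto H(-\theta;\ep)$ solves \eqref{eq:ODE} with $\ep_2$ replaced by $-\ep_2$ — more precisely, the relevant invariance is that complex conjugation commutes with $A(\zero;\ep)$ composed with the reflection $R:v(\theta)\mapsto v(-\theta)$, because the coefficients of $A(\zero;\ep)$ are real. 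Since $A(\zero;\ep)$ has real coefficients (as an operator $\dotL^2(S^1)\to\dotL^2(S^1)$, complexified), its spectrum is symmetric under complex conjugation; hence the two eigenvalues near $\pm\ii$ form a conjugate pair, giving $\la^-(\ep) = \overline{\la^+(\ep)}$, which is exactly $\Re\la^+=\Re\la^-$ and $\Im\la^+ = -\Im\la^-$. The strict inequality $\Im\la^+(\ep)>0$ for small $\ep$ holds by continuity from $\Im\la^+(\ep^0)=1>0$. This disposes of part (i).

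For parts (ii) and (iii), the key object is the function $g(\ep) := \Re\la^+(\ep)$, real-analytic in $\ep$ on a neighborhood of $0$ with $g(0)=0$. I would compute its first-order Taylor expansion by first-order eigenvalue perturbation: $\la^+(\ep) = \ii + \dpr{\phi^*, (D_{\ep_1}A)(\zero;\ep^0)\phi}\ep_1 + \dpr{\phi^*,(D_{\ep_2}A)(\zero;\ep^0)\phi}\ep_2 + O(|\ep|^2)$, where $\phi = \ee^{\ii\theta}$ is the right eigenfunction of $A_0$ for $\ii$ and $\phi^*$ the corresponding left eigenfunction normalized by $\dpr{\phi^*,\phi}=1$ (here, since $A_0$ is diagonal in the orthonormal Fourier basis, $\phi^* = \phi$ up to conjugation and $\dpr{\phi^*,\psi} = (\psi|\phi)$). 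The derivatives $D_{\ep_j}A(\zero;\ep^0)$ are read off from \eqref{e:A1} and \eqref{eq:f1}, using the first-order expansion $H = \one + \tfrac{\ep_1}{3}\cos\theta + O(|\ep|^2)$ from Theorem~\ref{thm:solution_ODE}(iv): differentiating $Q_\ep[v]$ in $\ep_1$ picks up both the explicit $-\ep_1\cos\theta\, H^2 v$ term and the contribution of $\pa_{\ep_1}H = \tfrac13\cos\theta$ through the surface-tension term $b H^3(\pa_\theta v + \pa_\theta^3 v)$ and the $3bH^2(\pa_\theta H + \pa_\theta^3 H)v$ term; differentiating in $\ep_2$ picks up $3\sin\theta\, v\,\pa_\theta H + \sin\theta\, H^2\pa_\theta v$ evaluated at $H\equiv\one$, i.e. $\sin\theta\,\pa_\theta v$ (the $\pa_\theta H$ term vanishes at $\ep^0$). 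Then $-\pa_\theta$ of these, paired against $\ee^{\ii\theta}$, gives two explicit complex numbers; call their real parts $-a_1$ and $+a_2$ respectively. One expects $a_2>0$ (destabilizing in $\ep_2$) and the combination to yield $g(\ep) = -a_1\ep_1 + a_2\ep_2 + O(|\ep|^2)$ with $\pa_{\ep_2}g(0) = a_2 \ne 0$. By the analytic implicit function theorem applied to $g(\ep)=0$, there is a real-analytic function $\ep_1\mapsto \Gtwo(\ep_1)$ with $\Gtwo(0)=0$, $\Gtwo'(0) = a_1/a_2$, solving $g(\ep_1,\Gtwo(\ep_1))=0$; the sign conditions in (ii) follow because $\pa_{\ep_2}g > 0$ near $0$, so $g$ is increasing in $\ep_2$ across the zero set, and $\Gtwo$ is increasing because $\Gtwo' = a_1/a_2 > 0$. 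Finally, matching $\Gtwo'(0)$ to the claimed value $\tfrac{3b}{2}(1+\sqrt{1+4/b})$ in \eqref{e:Ec2} requires the precise evaluation of $a_1$ and $a_2$; the quadratic-surd form strongly suggests that $a_1$ itself involves solving a quadratic (indeed, the ratio is a root of $a_2 x^2 - (\text{something}) x - (\text{something}) = 0$ coming from the structure of the $n=1$ Fourier mode interacting with the $\cos\theta$ and $\sin\theta$ coefficients).

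The main obstacle I expect is the honest computation of the first-order perturbation coefficients $a_1, a_2$, and in particular verifying that $a_1/a_2$ equals the specific algebraic expression in \eqref{e:Ec2}. This is delicate because $\pa_{\ep_1}H$ contributes to the coefficients of $A$ nontrivially (the surface-tension term is not small), so one cannot just differentiate the explicit lower-order terms of $Q_\ep$; one must carry the Fréchet derivative of $H(\cdot;\ep)$ correctly, and the $\cos\theta$, $\sin\theta$ multipliers couple the $n=\pm1$ Fourier modes to $n=0,\pm2$, so the "diagonal" pairing against $\ee^{\ii\theta}$ requires care about which cross-terms survive. The appearance of $\sqrt{1+4/b}$ indicates this is where a quadratic equation enters — plausibly from diagonalizing a $2\times 2$ block or from the resolvent $(A_0 - \ii)^{-1}$ acting within the invariant subspace spanned by low modes — and getting that right is the crux of the proof; the rest is standard analytic perturbation theory and the implicit function theorem. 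A secondary technical point is justifying that only two eigenvalues enter the region $\Re\la > -\de_5$ uniformly in $\ep$, which needs a resolvent bound on $A(\zero;\ep)$ away from the two distinguished eigenvalues — this is where the sectoriality estimates of Appendix~\ref{a:A} do the work.
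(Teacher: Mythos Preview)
Your analysis of the unperturbed operator $A_0$, the spectral gap, the holomorphic dependence of the two simple eigenvalues, and the conjugate-pair symmetry $\la^-(\ep)=\overline{\la^+(\ep)}$ is correct and matches the paper (Proposition~\ref{p:Av}). The sectoriality argument and the reduction to studying $g(\ep)=\Re\la^+(\ep)$ are also on the right track.

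The substantive gap is your plan for parts (ii)--(iii). You assert $g(\ep)=-a_1\ep_1+a_2\ep_2+O(|\ep|^2)$ with $\pa_{\ep_2}g(0)=a_2\neq 0$ and then invoke the implicit function theorem. In fact \emph{both first-order coefficients of $\la^+$ vanish}: $\la^+_{1,0}=\la^+_{0,1}=0$. The reason is precisely the Fourier coupling you note: every first-order perturbation term carries a factor $\cos\theta$ or $\sin\theta$, so acting on the eigenfunction $\ee^{-\ii\theta}$ it produces only modes $n=0,-2$, which pair to zero against $\ee^{-\ii\theta}$; the $\pa_{\ep_1}H=\tfrac13\cos\theta$ contribution also vanishes because $\Bo[\cos\theta]=0$ and $\Bo^*[\ee^{-\ii\theta}]=0$. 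Hence $\nabla g(0)=0$, the point $\ep^0$ is $\rla$-singular in the paper's terminology, and the implicit function theorem does not apply.

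The paper instead pushes the expansion to second order. One further round of testing (against $\ee^{-2\ii\theta}$) determines $(\psi_{1,0}|\ee^{-2\ii\theta})$ and $(\psi_{0,1}|\ee^{-2\ii\theta})$, from which $\la^+_{2,0},\la^+_{1,1},\la^+_{0,2}$ are computed explicitly, yielding
\[
(1+144b^2)\,g(\ep)=-9b\,\ep_1^2-3b\,\ep_1\ep_2+\ep_2^2+O(|\ep|^3).
\]
This quadratic form has negative Hessian determinant, so by Lemma~\ref{l:n=2Det<0} the zero-locus $\{g=0\}$ near $0$ is the union of \emph{two} analytic curves crossing nontangentially at the origin. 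The branch in the first quadrant is the graph of $\Gtwo$, and $\Gtwo'(0)$ is the positive root of $t^2-3b\,t-9b=0$, namely $\tfrac{3b}{2}(1+\sqrt{1+4/b})$. Your intuition that the surd signals a quadratic was right, but the quadratic is the Hessian of $g$ itself, not a first-order resolvent computation; the increasing property of $\Gtwo$ and the sign equivalences in (ii) then come from the signs of this quadratic form in the four sectors it determines, not from $\pa_{\ep_2}g(0)$.
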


This theorem is proved in Section \ref{ss:PertLa}.

Controlling the rest of eigenvalues for small enough $\de_2$, 
we show that 
the graph $\Gr \Gtwo$ of the function $\Gtwo$,  $\Gr \Gtwo := \{(\ep_1,\Gtwo (\ep_1)) \ : \ \ep_1 \in (-\de_3,\de_3)\}$, 
separates in the first quadrant $\{\ep_1>0,\ep_2>0\}$ the regions of  instability and exponential stability  for the steady state $\zero$ of the reduced system \eqref{e:PDEHyp2}.
 That is, it separates the regions of instability and exponential stability of the steady state $H(\ep)$ of \eqref{eq:PDE2} if the dynamics considered in the corresponding hyperplane $\Hyp (C(\ep))$.
(Note that, because of the invariance of the hyperplanes $\Hyp (C(\ep))$, the steady states $H(\ep)$ obviously cannot be asymptotically stable in the sense of the original equation \eqref{eq:PDE2}).

It will be shown in Theorem \ref{t:HB}  that 
the graph $\Gr \Gtwo$ of the function $\Gtwo (\ep_1)$ is essentially a parametric bifurcation curve 
from the point of view of  multi-parameter Hopf-bifurcations (see Definition \ref{d:HbManifold} and Theorem \ref{t:n-parHb} below; for the basics of the theory in finite-dimensional settings see  \cite{K95}). 

The rigorous description of multi-parameter Hopf bifurcations is given in Section \ref{s:HBon Paths}, where we adapt to infinite-dimensional dynamical systems the approach suggested by Diekmann and Korvasov\'a  \cite{DK13} for 2-parameter Hopf bifurcations of delay differential equations. 
Namely, if the parameter $\ep$ moves along a $C^3$-path $\Ecal (s) $, $s \in [s_-,s_+]$,  that crosses a Hopf-bifurcation curve $\Gr \Gtwo$ nontangentially at a certain point $\ep = \Ecal (s_0) \in \{\ep_1>0,\ep_2>0\} $, then 1-parameter Hopf bifurcation  takes place at this point w.r.t. the new one-dimensional parameter $s$. That is, a family of periodic nonstationary solutions branches from the curve of steady states at $\ep = \Ecal (s_0)$. 
These stability and bifurcation results are summarized in the next theorem.

\begin{theorem} \label{t:HB}
Assume that $\de_3,\de_4>0$ and the function $\ep_1 \mapsto \Gtwo (\ep_1)$ are as in Theorem \ref{t:AHyp}.  Let us denote
$\Om_- := \{ \ep \in  [0,\de_3) \times [0,\de_4)  :  \ep_2 \  <  \Gtwo (\ep_1) \}$ and   
$\Om_+:= \{ \ep \in  [0,\de_3) \times [0,\de_4)  :   \Gtwo (\ep_1) < \ep_2 \}. $

Then the following statements hold:
\begin{itemize}
\item[(i)] If $\ep \in \Om_-$, then $H (\cdot;\ep)$ is an exponentially stable steady state of the restriction of the dynamical system \eqref{eq:PDE2} to the invariant hyperplane $\Hyp (C(\ep))$.
\item[(ii)] If $\ep \in \Om_+$, then the steady state $H (\cdot;\ep)$ of the restriction of \eqref{eq:PDE2} to the invariant hyperplane $\Hyp (C(\ep))$ is not stable in the sense of Lyapunov.
\item[(iii)]
Let $\Ecal \colon (s^-,s^+) \to \Rbb^2$ be a simple $C^3$-path that crosses 
the graph $\Gr \Gtwo$ nontangentially at certain point 
$\Ecal (s_0) $
 %\in \{(\ep_1, \Gtwo (\ep_1)) \ : \ 0<\ep_1<\de_3, \  0 < \ep_2 < \de_4\} $ 
 passing from the region $\Om_-$
to the region $\Om_+$.
Then, for the reduced system \eqref{e:PDEHyp2},
\begin{equation} \label{e:HBfTh}
\text{a Hopf bifurcation takes place at $(\zero;\Ecal(s_0))$ on the path $\Ecal$ }
\end{equation}
in the sense of  Definition \ref{d:Hbpath}. 
In particular, for arbitrary $\de>0$ there exists $s \in (s_0 - \de,s_0+\de)$ such that, for $\ep=\Ecal (s)$, 
\begin{multline}
\text{equation \eqref{eq:PDE2}  has a nonstationary periodic in $t$ solution $h(t,\th)$  } 
\\  \text{  that satisfies $ \sup\limits_{\substack{\theta \in S^1 \\ t \in \Rbb}} |h(t,\theta) - H (\theta;\Ecal (s))| < \de$.} \label{e:periodic}
\end{multline}
\end{itemize}
\end{theorem}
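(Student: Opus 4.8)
The plan is to reduce Theorem~\ref{t:HB} to the abstract multi-parameter Hopf-bifurcation machinery of Section~\ref{s:mpHopfB}, applied to the reduced dynamical system \eqref{e:PDEHyp2} on the Hilbert space $\dotL^2_\crm(S^1)$ with state space domain $\dotH^4_\crm(S^1)$. The three assertions are of quite different natures: (i) and (ii) are stability statements that follow from spectral information already packaged in Theorem~\ref{t:AHyp} together with a semigroup/principle-of-linearized-stability argument; (iii) is the genuine bifurcation claim, which needs the abstract theorem applied along the path~$\Ecal$.

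First I would handle (i) and (ii). By Theorem~\ref{t:AHyp}(i), for $\ep$ in the relevant box the spectrum of $A(\zero;\ep)$ to the right of the line $\Re\la=-\de_5$ consists exactly of the conjugate pair $\la^\pm(\ep)$, and $A(\zero;\ep)$ generates a quasi-bounded holomorphic (i.e. analytic, sectorial after a shift) semigroup. Hence the part of the spectrum with $\Re\la\le-\de_5$ is uniformly bounded away from $\ii\Rbb$ and contributes exponential decay, while the behaviour near $\ii\Rbb$ is dictated solely by $\la^\pm(\ep)$. For $\ep\in\Om_-$ Theorem~\ref{t:AHyp}(ii) gives $\Re\la^\pm(\ep)<0$, so the entire spectrum lies in $\{\Re\la<-\kappa\}$ for some $\kappa>0$; the standard principle of linearized stability for analytic semigroups / quasilinear parabolic problems in the setting of \cite{L12} then yields exponential stability of the zero steady state of \eqref{e:PDEHyp2}, i.e. of $H(\cdot;\ep)$ within $\Hyp(C(\ep))$. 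This proves (i). For $\ep\in\Om_+$, Theorem~\ref{t:AHyp}(ii) gives $\Re\la^\pm(\ep)>0$: there is an eigenvalue in the open right half-plane, so by the instability part of the linearized-stability principle (again available for analytic semigroup generators with a spectral gap, cf. \cite{L12,K04}) the steady state is not Lyapunov stable, which is (ii).

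For (iii), the strategy is to verify that along the path $\Ecal$ the reduced system \eqref{e:PDEHyp2} satisfies the hypotheses of the abstract one-parameter Hopf-bifurcation theorem of Section~\ref{s:HBon Paths} (Definition~\ref{d:Hbpath} and Theorem~\ref{t:n-parHb}), with the single scalar parameter $s$. Reparametrize: set $G(w;s):=F(w;\Ecal(s))$, a $C^3$ (indeed, composing the real-analytic $F$ with the $C^3$ path) map with $G(\zero;s)=\zero$ for all $s$ near $s_0$. Its linearization at $\zero$ is $A(s):=A(\zero;\Ecal(s))$, which depends $C^3$ on $s$ and, by Theorem~\ref{t:AHyp}(i), has near $\ii\Rbb$ exactly the simple conjugate pair $\mu^\pm(s):=\la^\pm(\Ecal(s))$ with $\mu^\pm(s_0)=\pm\ii$, while the rest of $\sigma(A(s))$ stays in $\{\Re\la\le-\de_5\}$, in particular no eigenvalues sit at $0$ or at $\ii k$ for integer $k\neq\pm1$ (the mass-conservation zero eigenvalue having been removed by passing to $\dotL^2_\crm$). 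Thus the nonresonance and simplicity conditions hold. The remaining ingredient is the transversality (nondegenerate eigenvalue crossing): one must show $\frac{d}{ds}\Re\mu^+(s)\big|_{s=s_0}\neq0$. This is exactly where the hypothesis that $\Ecal$ crosses $\Gr\Gtwo$ \emph{nontangentially} at $\Ecal(s_0)$, passing from $\Om_-$ to $\Om_+$, enters: by Theorem~\ref{t:AHyp}(ii) the sign of $\Re\la^\pm(\ep)$ is governed by $\sign(\ep_2-\Gtwo(\ep_1))$, whose zero set is precisely $\Gr\Gtwo$; since $\ep\mapsto\Re\la^\pm(\ep)$ is (real-)analytic with gradient transverse to $\Gr\Gtwo$, and $\Ecal'(s_0)$ is not tangent to $\Gr\Gtwo$, the chain rule gives $\frac{d}{ds}\Re\mu^+(s)\big|_{s_0}=\langle\nabla_\ep\Re\la^+(\Ecal(s_0))\,|\,\Ecal'(s_0)\rangle_{\Rbb^2}\neq0$, with the correct sign so that $\Re\mu^+$ changes from negative to positive. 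Feeding these verified hypotheses into Theorem~\ref{t:n-parHb} yields the branch of nonstationary periodic solutions of \eqref{e:PDEHyp2} at $s=s_0$, and, translating back ($u=h-H(\cdot;\Ecal(s))$), the periodic solutions $h(t,\theta)$ of \eqref{eq:PDE2} with the $C^0$-smallness \eqref{e:periodic}, using the embedding $\Hcal^4(S^1)\embed C(S^1)$ to convert the $\Hcal^4$-smallness of the branch into the stated uniform bound.

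The main obstacle is the transversality verification in (iii), i.e. controlling $\frac{d}{ds}\Re\mu^\pm(s)$ at $s_0$ and in particular its sign. One has to be careful that ``nontangential crossing from $\Om_-$ to $\Om_+$'' really forces a genuine (nonzero, correctly-signed) derivative of $\Re\mu^\pm$: this requires knowing that $\Re\la^\pm(\ep)$ vanishes exactly on $\Gr\Gtwo$ to first order, i.e. that $\nabla_\ep\Re\la^\pm$ does not itself vanish on $\Gr\Gtwo$ — which is implicit in the equivalences of Theorem~\ref{t:AHyp}(ii) near $\ep_1=0$ but should be invoked explicitly. A secondary technical point is checking that the abstract Hopf theorem of Section~\ref{s:mpHopfB} is stated in a regularity class (here only $C^3$ in $s$, because $\Ecal$ is only $C^3$) and a functional-analytic framework (sectorial generator, the pair $\dotH^4_\crm\subset\dotL^2_\crm$) compatible with \eqref{e:PDEHyp2}; once Section~\ref{s:mpHopfB} is set up with exactly this in mind, (iii) is a direct application, and everything else is bookkeeping about the complexification and the translation between $u$ and $h$.
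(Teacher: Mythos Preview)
Your proposal is correct and follows essentially the same route as the paper: parts (i)--(ii) are obtained from Theorem~\ref{t:AHyp} together with the Principle of Linearized Stability in the framework of \cite{L12} (the paper packages this as Propositions~\ref{p:stability} and~\ref{p:instability}), and part (iii) is obtained by verifying the hypotheses of the one-parameter Hopf theorem on the path~$\Ecal$ (Proposition~\ref{p:HbOnPath} is the precise tool, rather than Theorem~\ref{t:n-parHb}). The one place where your argument is slightly soft is exactly the point you flag: the nonvanishing of $\nabla_\ep\rla$ along $\Gr\Gtwo\setminus\{0\}$ is \emph{not} a formal consequence of the sign equivalences in Theorem~\ref{t:AHyp}(ii) alone (e.g.\ $\rla=(\ep_2-\Gtwo(\ep_1))^3$ would satisfy the same trichotomy); in the paper this is supplied by the explicit second-order expansion \eqref{e:rela}, which gives $\det\Hf_\rla(0)<0$, and then Theorem~\ref{t:n=2Det<0} guarantees that every point of the zero-locus other than the origin is $\rla$-regular, hence the transversality condition~\eqref{e:trGeom} holds at $\Ecal(s_0)$.
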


This theorem is proved in Section \ref{s:Stability}.

%%-------------------------------------------------
%%-------------------------------------------------
%\begin{tikzpicture}[xscale=0.5, yscale=0.5] 
%    \draw [ultra thick, fill=luh-dark-blue!50] circle (5);
%    \fill [fill=white] (0:3.5) 
%        [out=90,in=300] to (30:3) 
%        [out=120,in=330]   to (60:3.5) 
%        [out=150,in=0]  to (90:3) 
%        [out=180,in=30] to (120:3.5) 
%        [out=210,in=60]  to (150:3) 
%        [out=240,in=90] to (180:3.5) 
%        [out=270,in=120] to (210:3) 
%        [out=300,in=150] to (240:3.5) 
%        [out=330,in=180] to (270:3) 
%        [out=0,in=210] to (300:3.5) 
%        [out=30,in=240] to (330:3) 
%        [out=60,in=270] to (0:3.5);
%    \draw [ultra thick, fill=white] circle (1.3);
%    \filldraw circle (0.1);
%    \node at (3.,-1.5) [label=right:\(\Omega(t)\)] {};
%    \draw [ultra thick,->,color=magenta] (5.5,0) arc (0:270:5.5) node [label=below:\(\omega\)] {};
%    % \node at (30:2.5) {\(\Omega_-\)};
%    \draw [thick] (0,0) -- (135:5) node [midway,label=\(R_+\)] {};
%    \draw [thick] (0,0) -- (30:1.3) node [midway,label=\(R_-\)] {};
%    \draw [thick] (0,0) -- (30:1.3) node [midway,label=\(R_-\)] {};
%    \draw [thick] (210:5) -- (210:3) node [midway,label=right:\(d\)] {};
%\end{tikzpicture}
%%-------------------------------------------------

\section{Multi-parameter Hopf bifurcations\label{s:mpHopfB}}

This section considers multi-parameter Hopf bifurcations in the abstract infinite-\linebreak{}dimensional settings and provides the theoretical background for the proofs of the results of Section \ref{s:MainRes}. 

Bifurcation theory distinguishes between several types of criticality effects. These include, in particular:
\begin{itemize}
	\item[(B1)] spectral (or linear) criticality, where the spectrum of  an operator linearized around an equilibrium corresponds to the threshold between stability and instability;
	\item[(B2)] dynamic criticality, i.e., the situation when the actual change in the behavior of the dynamical system happens under a modification of a parameter through its specific critical value. 
\end{itemize}
In the finite-dimensional context of Arnold's lectures \cite{A72}, situations of type (B2) fit  the notion of a bifurcation. Situations of type (B1) usually play the role of necessary spectral conditions for specific bifurcations of type (B2).

We are interested in Hopf bifurcation 
(Andronov--Hopf bifurcation in the terminology of \cite{AAISh94,K95})
which has a variety of definitions depending on the dimension $\dim \Xcal$ of the state space $\Xcal$ and the dimension $n$ of the parameter space 
(see, e.g., \cite{A72,AAISh94,K95,DvGLW12} 
for the case $\dim \Xcal < \infty$ and  $n \in \Nbb$, 
\cite{CR77,K04,DvGLW12,L12} for the case $\dim \Xcal = \infty$ and $n=1$, 
and the discussion of the case $\dim \Xcal = \infty$ and $n \in \Nbb$ in \cite{K04,DK13}). 
The corresponding change of the long-time dynamics can be described 
as a branching of a family of nontrivial periodic solutions from a curve of equilibria 
at the point where a conjugate pair of eigenvalues of a linearized operator crosses the stability threshold $\ii \Rbb$.
%  the purely imaginary line. 
This phenomenon was observed for $2$-dimensional systems already by Poincaré and studied by Andronov (see \cite{A72,AAISh94,K95}). 
The one-parameter Hopf bifurcation \cite{H42} describes an important  particular case of such branching where an additional nondynamical condition is satisfied (Hopf condition); namely, the conjugate pair of eigenvalues crosses $\ii \Rbb$ with the nonvanishing real part of their derivative. Roughly speaking, the Hopf bifurcation can be seen as a pitchfork bifurcation of a curve of cycles from a curve of equilibria.

In this section, we adapt the definition of multi-parameter Hopf bifurcation suggested for delayed  ODEs by Diekmann and Korvasova \cite{DK13} to the case of a dynamical system in an infinite-dimensional state space in such a way that it fits the specifics of the rimming--flow equation \eqref{eq:PDE}. This definition, roughly speaking, reduces the multi-parameter Hopf bifurcation to the one-parameter case considering paths $\Ecal(s)$, $s \in [s_-,s_+]$, in the parameter space $\Rbb^n$ that  cross 
%$n-1$ dimensional 
$(n-1)$-dimensional bifurcation surfaces 
%(or bifurcation curves in the case $n=2$) 
in $\Rbb^n$ at certain points $\ep^0 $ which are critical in the sense of (B1), i.e., in the sense of  the spectrum of the linearized operator $A$ (we call such points spectrally critical or $\si(A)$-critical).
In these settings, $s \in [s_-,s_+]$ plays the role of the 1-dimensional  parameter of the  one-parameter Hopf-bifurcation theorems in the style of \cite{CR77,K04,L12}. 

Consider an abstract evolution equation 
\begin{gather} \label{e:Feq}
\pa_t x = F (x;\ep) , \quad x \in U  \subset \Xcal, 
\end{gather}
dependent on an $n$-parameter $ \ep = (\ep_j)_{j=1}^n \in V \subset \Rbb^n $, 
where the set $V$ is open in $\Rbb^n $, and $U$ is an open subset of a real Banach space $\Xcal$.
It is assumed that a real Banach space $\Zcal$ and the map $F\colon U \times V \to \Zcal$ satisfy the following hypotheses:
%\begin{gather}
%\text{the continuous embedding $\Xcal \hookrightarrow  \Zcal$ holds, and $\Xcal$ is dense in $\Zcal$,} \label{h:HComp} 
% 	%\Xcal \xhookrightarrow{d}  \Zcal; 
% \\
%\text{$F$ is a real analytic map from $U \times V$ to $ \Zcal$.} \label{h:HAn}
%\end{gather}
\begin{gather}
      \text{$\Xcal$ is continuously and densely embedded in $\Zcal$},
	\label{h:HComp} 
	\\
	F : U \times V \to \Zcal
	\quad 
	\text{is a real analytic map}.
	\label{h:HAn}
\end{gather}
The derivative $\pa_t x$ is understood in the sense of the convergence w.r.t. the norm $\|\cdot \|_{\Zcal}$ of the Banach space $\Zcal$.

The \emph{complexification} $\Ycal_\crm$ of a real Banach space $\Ycal$ is defined as a formal sum $\Ycal_\crm := \Ycal + \ii \Ycal$ (in particular, $(\Rbb^n)_\crm = \Cbb^n$ is the complexification of the parameter space).
Then, for $y \in \Ycal_\crm$, the real part $\Re y$, the imaginary part $\Im y$, and the complex conjugation $\bar{y}$  of any vector $y \in \Ycal_\crm$ are well-defined.  If $\Ycal$ is an inner product space, the real inner product $( \cdot | \cdot )_\Ycal$ of $\Ycal$ is extended in a natural way to the complex inner product $( \cdot | \cdot )_{\Ycal_\crm}$.

Similarly, a real (linear) operator $T \in \Lcal (\Xcal,\Zcal) $ is extended in a natural way to a complex operator $T_\crm \in \Lcal (\Xcal_\crm,\Zcal_\crm) $.
Note that, if $\la \in \Cbb$ is an eigenvalue of $T_\crm$ corresponding to an eigenvector $x \in \Xcal_\crm$ (i.e., $ T_\crm x = \la x $ and $x \neq 0$), then the complex conjugate $\bar{\la}$ also belongs to the set of eigenvalues $\si_p (T_\crm)$ of $T_\crm$ and 
$T_\crm \bar{x} = \bar{\la} \bar{x}$.

Denoting by $D_x F(x^0;\ep^0)$
the Fréchet derivative of $F$ w.r.t. $x$ at $(x^0;\ep^0) \in U \times V$, one obtains a real analytic map 
$(x,\ep) \mapsto D_x F (x;\ep)$ acting from 
$U \times V$ to $\Lcal(\Xcal,\Zcal)$.
For each $(x;\ep) \in U \times V$, we define the operator 
\begin{equation} \label{e:A} 
	A(x;\ep) \in \Lcal(\Xcal_\crm,\Zcal_\crm) \ \text{as the complexification $(D_x F(x;\ep))_\crm$ }
\end{equation}
of $D_x F(x;\ep)$.
In the sequel, the complexified $x$-derivative $A(x;\ep)$ is considered also as an operator in the space $\Zcal_\crm$ with  domain $\dom A(x;\ep) = \Xcal_\crm$, which corresponds to the notation 
$A(x;\ep)\colon \Xcal_\crm \subseteq \Zcal_\crm \to \Zcal_\crm$.

\subsection{Hypersurfaces of spectrally critical $n$-parameters\label{s:Hypersurf}}

We define and study in this  subsection the family of spectrally critical $n$-parameters $\ep$ using the theory of multi-parameter perturbations of eigenvalues. The necessary results on multi-parameter perturbations are collected in Appendix \ref{a:A}. We prove in this  subsection Proposition \ref{p:SpCrManif1}, which  is crucial for the extension of the multi-parameter Hopf-bifurcation theory to the case of infinite-dimensional state space $\Xcal$.  Proposition \ref{p:SpCrManif1} shows that the combination of the spectral criticality with the sectoriality properties of $A(x;\ep)$ is robust under suitable multi-parameter perturbations of $(x;\ep)$. This later allows us to define rigorously Hopf-bifurcation manifolds via Proposition \ref{p:SpCrManif2} and Theorem \ref{t:n-parHb}.

A point $(x;\ep) $ is said to be an \emph{equilibrium point} if $x$ is an equilibrium of 
$\pa_t x = F (x;\ep)$, i.e., if  $F(x;\ep)=0$.
Accordingly,
$F^{-1}(\{0\}) = \{(x;\ep) \in U \times V \ : \ F(x;\ep)=0\}$ is the set of all equilibrium points of the evolution equation $\pa_t x = F (x;\ep)$.

Let $(x^0;\ep^0)$ be an equilibrium point such that $0 \in \rho (A(x^0;\ep^0))$,  i.e., such that
$A(x^0;\ep^0) $
%: \Xcal_\crm \subseteq \Zcal_\crm \to \Zcal_\crm$ 
is boundedly invertible as a densely defined operator in $\Zcal_\crm$. 
Then 
the analytic implicit function theorem (see, e.g., \cite[Appendix B]{PT86}) implies that there exists an open neighborhood $\Ncal^{x^0} \times \Ncal^{\ep^0} \subset U \times V $ of $(x^0;\ep^0)$ such that all equilibria $x \in \Ncal^{x^0}$ of $\pa_t x = F (x;\ep)$ with $\ep \in \Ncal^{\ep^0}$ 
have the form $x = X^\loc(\ep)$, where $X^\loc\colon \Ncal^{\ep^0} \to \Ncal^{x^0}$ is a certain real analytic map satisfying $x^0 = X^\loc(\ep^0)$. 
So $ X^\loc (\cdot)$ provides  a local analytic parametrization of the set of all equilibrium points near $(x^0;\ep^0)$. In other words, the graph 
$\Gr X^\loc $  of $X^\loc(\cdot)$, \linebreak
$\Gr X^\loc =
	\{(X^\loc(\ep);\ep) \ : \ \ep \in \Ncal^{\ep^0}\} $, 
is a \emph{local equilibrium manifold} near $(x^0;\ep^0)$.

Taking into account the stability of the property $0 \in \rho(A(x;\ep))$  (see Remarks \ref{r:PerRes}--\ref{r:PerSectC} in Appendix \ref{a:A}), one sees that the set of all equilibrium points where $A(x;\ep)$ is invertible 
\begin{gather} \label{e:Meq}
%\Meq := 
	\{(x;\ep) \in F^{-1} (\{0\})\ : \ 0 \in \rho(A(x;\ep))\}
\end{gather}
is an $n$-dimensional real analytic manifold
in $\Xcal \times \Rbb^n$.

%The following definition is standard.

\begin{definition}[cf. \cite{K04,L12}] \label{d:SpStability}
An equilibrium point $x$ of the evolution equation $\pa_t x = F (x;\ep)$ is called
\begin{itemize}
	\item[(i)] \emph{spectrally exponentially stable} if 	the spectrum $\si(A(x;\ep))$ of the complexified  Fréchet derivative of $F$ is a subset of the halfplane 
	$ \{\zeta \in \Cbb \ : \ \Re \zeta  \le  \xi\}$ for a certain $\xi<0$;
	\item[(ii)] \emph{spectrally unstable} if $\si(A(x;\ep)) \not \subseteq \{\zeta \in \Cbb \ : \  \Re \zeta \le 0 \}$.
\end{itemize}
\end{definition}

\begin{remark}
Under a variety of assumptions, a connection of these spectral properties with actual instability and exponential stability of 
$\pa_t x = F (x;\ep)$ is given by the Principle of Linearized Stability (see \cite{K04,L12}), which is considered in Section \ref{s:Stability} in the context of the reduced rimming-flow equation  \eqref{e:PDEHyp2}.
\end{remark}

%The spectral criticality that corresponds to the bifurcations of Poincaré--Andronov--Hopf type 
%in the particular case of exactly one pair of simple isolated eigenvalues on $\ii \RR$ 
%is described by the following definition.

\begin{remark} Keeping in mind the applications of Section \ref{s:MainRes}, we restrict ourselves  to the simplest spectrally critical case where exactly one pair of conjugate  eigenvalues lies on $\ii \Rbb$. If more eigenvalue pairs on $\ii \Rbb$ are allowed, one has to pay attention to possible additional  singularities caused by resonances (cf. the 1-parameter formulations in \cite{K04,L12}).
\end{remark}

\begin{definition}[cf. \cite{K95,K04,DvGLW12,L12}] \label{d:SpCr}
An equilibrium point $(x;\ep) \in U \times V$ is said to be  $\sigma(A)$-critical (or spectrally critical in the sense of Poincaré--Andronov) if the following two conditions are satisfied:
\begin{itemize}
	\item[(i)]  $\si(A(x;\ep))$ lies in the closed left half-plane $\{\zeta \in \Cbb \ : \  \Re \zeta \le 0 \}$;
	\item[(ii)] the pure imaginary part  $\si(A(x;\ep)) \cap \ii \Rbb$ of the spectrum consists of a pair of complex conjugate simple isolated eigenvalues, which are denoted by $\la_0^\pm$ in such a way that  $\Im \la_0^+ = - \Im \la_0^- >0$.
\end{itemize}
In this case, $\ep$ is called a $\sigma(A)$-critical $n$-parameter (associated with 
%the equilibrium 
$x$).
\end{definition}

Let $(x^0;\ep^0)$ be a $\sigma(A)$-critical equilibrium point. Using Definition \ref{d:SpCr} (ii), we define eigenvalues $\la_0^\pm \in \ii \Rbb$ of $A_0 := A(x^0;\ep^0)$ and infer that $0 \in \rho(A_0)$.  Consequently,
\begin{equation*}
	(x^0;\ep^0) 
	\in 
	\left\{  (x,\ep) \in F^{-1} (\{0\}) \, : \, 0 \in \rho (A(x;\ep))  \right\}
\end{equation*}
and there exists  a local analytic parametrization $X^\loc : \Ncal_0^{\ep^0} \to \Ncal_0^{x^0}$ of all equilibrium points in a certain neighborhood $\Ncal_0^{x^0} \times \Ncal_0^{\ep^0}$ of $(x^0;\ep^0)$. The restriction of the $x$-derivative $D_x F(\cdot;\cdot)$ to 
the  local equilibrium manifold $\Gr X^\loc$ can be written as a real analytic function of  $\ep$, which  yields after the complexification 
the $\Lcal (\Xcal_\crm,\Zcal_\crm)$-valued  function \ 
	$A^\loc (\ep) := A( X^\loc (\ep); \ep)$ \ depending only on  $\ep \in \Ncal_0^{\ep^0}$.

It follows from Proposition \ref{p:PerSimpleC} and Remark \ref{r:PerSectC} that 
there exist an open
 neighborhood $\Ncal_{1,\crm}^{\ep^0} \subset  \Cbb^n$  of $\ep^0$ and  functions $\la^\pm(\cdot):\Ncal_{1,\crm}^{\ep^0} \to \Cbb$ such that 
\begin{align} \label{e:lapm0}
& \text{ $\la^\pm(\cdot)$ are holomorphic in $\Ncal_{1,\crm}^{\ep^0}$, \qquad $\la^\pm (\ep^0) = \la_0^\pm$,}
\\
&  \text{and $\la^\pm (\ep)$ are simple isolated eigenvalues of 
$A^\loc (\ep)$ for all $\ep \in \Ncal_1^{\ep^0} $,}  \label{e:lapm1}
\end{align}
where  $\Ncal_1^{\ep^0} := \Ncal_0^{\ep^0} \cap \Ncal_{1,\crm}^{\ep^0}$ is an $\Rbb^n$-neighborhood  of $\ep^0$.

Without loss of generality, the neighborhood $\Ncal_{1,\crm}^{\ep^0}$ can be chosen in such a way that   
 the Taylor series  \ $ \sum_{\alpha}\la_\alpha^+(\ep-\ep^0)^\alpha $ of $\la^+(\ep)$ 
converges in  $\Ncal_{1,\crm}^{\ep^0}$.
Here and below the summations are done w.r.t. the multi-parameter $\al \in \Nbb_0^n$, where 
$\Nbb_0 =  \{0\} \cup \Nbb$. 
%Note that for the multi-parameter $\al = 0_{\Rbb^n}$ the Taylor coefficient  $\la_{0_{\Rbb^n}}^+$ is equal to the the eigenvalue $\la_0^+$.  
In the $\Rbb^n$-neighborhood $\Ncal_1^{\ep^0}$ of $\ep^0$, we define the function 
\begin{gather} \label{e:r}
\rla (\ep) = \Re \la^\pm (\ep),
\end{gather}
 which is real analytic since \ $\rla (\ep) = \sum_\alpha (\Re \la_\al^+) (\ep - \ep^0)^\al  $ \ in $\Ncal_1^{\ep^0}$.

%\begin{remark} \label{r:anotherLoc}
%Note that the functions $\la^\pm$ and $\rla$ are local in the sense that their definitions depend on the choice of a particular $\si(A)$-critical equilibrium $x^0$ associated with $\ep^0$, and so on the choice of the corresponding local equilibrium manifold. 
%\end{remark}

The condition that $A^\loc(\ep^0)$ generates a quasi-bounded holomorphic semigroup (in the terminology of Kato \cite{Kato}) is one of the conditions used in the infinite-dimensional stability theory \cite{CR77,K04,H06,L12} to ensure that, except the two eigenvalues $\la^\pm (\ep)$, the spectrum of $A^\loc (\ep)$  stays far away from the pure imaginary line $\ii \Rbb$ for $\ep$ close to $\ep^0$. 
We use this condition in the multi-parameter settings and adapt for this purpose the perturbation results of \cite{HillePhillips,Kato}.

Under $\Hsec$-sectorial operators we understand generators of quasi-bounded ho\-lo\-morphic semigroups $e^{t T}$ (essentially, they are sectorial operators in the sense of Henry \cite{H06}).
That is, for angles $\vphi \in (0,\pi]$, we consider sectors of the form
\[
     \Sigma_{\vphi} 
    = 
    \left\{
    \lambda \in \C\setminus\{0\} \ : \ \ |\arg (\lambda)| < \vphi
    \right\},
\]
where the complex argument $\arg (\lambda)$ of $\lambda \in \C\setminus\{0\}$ is defined as a unique $\vphi_0 \in [-\pi,\pi)$ such that $\lambda = r \ee^{i\vphi_0}$ for a certain $r \in \Rbb_+ := (0,+\infty)$. Let $\om \in (0,\pi/2]$. Slightly modifying the notation of \cite{Kato}, let us denote  by $\Hsec (\omega,0)$ the family  of densely defined closed operators $T$ in a complex Banach space such that the resolvent set $\rho (T)$ contains the sector $ \Sigma_{\frac{\pi}{2} + \omega}$ and, for any smaller sector 
$ \Sigma_{\pi/2 + \omega'}$ with $\omega' \in [0,\omega)$, the  uniform estimate
$
	\|(T-\zeta I)^{-1}\|
	\le 
	\frac{M_{\omega'}}{|\zeta|}, $
	\
	$
	\zeta \in \Sigma_{\frac{\pi}{2} + \omega'}, 
$
holds with a constant $M_{\omega'} >0$ depending on $\omega'$.
An operator $T$ belongs to the family $\Hsec (\omega,\zeta_0)$ if $T=T_0 + \zeta_0$ with a certain $T_0 \in \Hsec (\omega,0)$ and certain $\zeta_0 \in \Rbb$. Finally, we introduce the notation  
\begin{equation} \label{e:Hsec}
	\Hsec := \bigcup_{\substack{\zeta_0\in \Rbb \\ \om \in (0,\pi/2]}} \Hsec (\omega,\zeta_0) \ , 
\end{equation}
and we say that an operator $T$ is $\Hsec$-\emph{sectorial} if $T \in \Hsec$.

\begin{remark}\label{r:sectorial}
The family of $\Hsec$-sectorial operators is exactly \cite{HillePhillips,Kato,L12} the family of generators of quasi-bounded holomorphic semigroups.
 In the theory of evolution equations in Banach spaces, operators satisfying various modifications of the above spectral definition are simply called sectorial, cf. \cite{EN00,H06,L12}. Unfortunately, the theory of operators in Hilbert spaces reserves the  attribute `sectorial'  for a different class of operators (see the monograph of Kato \cite{Kato}). The class of sectorial operators in the sense of \cite{Kato} is defined via the numerical range and only partially overlaps with the family $\Hsec$.
\end{remark}

The next proposition essentially says that, if $A_0 = A(x^0;\ep^0)$ is an $\Hsec$-sectorial operator in $\Zcal_{\crm}$, then the set of $\sigma(A)$-critical $n$-parameters coincides locally with the \emph{zero-locus} $\rla^{-1} (\{0\}) = \{ \ep \in \Ncal_1^{\ep^0}\  : \ \rla(\ep) = 0\}$ of $\rla(\cdot)$.

\begin{proposition} \label{p:SpCrManif1}
Let $(x^0;\ep^0) $ be a $\si(A)$-critical point and let $A(x^0;\ep^0) \in \Hsec$. Then:
\begin{itemize}
	\item[(i)] $A^\loc(\ep)$ is $\Hsec$-sectorial for $\ep$ in a certain open $\Rbb^n$-neighborhood $\Ncal_2^{\ep^0}$ of $\ep^0$.
	\item[(ii)] There exists an open $\Rbb^n$-neighborhood $\Ncal_3^{\ep^0} $ of $\ep^0$ such that,  for all $\ep \in \Ncal_3^{\ep^0}$,
	the equilibrium point $(X^\loc(\ep);\ep)$ is $\si(A)$-critical  if and only if $\rla(\ep) = 0$.	
\end{itemize}
\end{proposition}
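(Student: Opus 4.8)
The plan is to handle the two assertions in turn. Throughout I use that $\ep\mapsto A^\loc(\ep)=A(X^\loc(\ep);\ep)$ is real analytic, hence norm-continuous, as an $\Lcal(\Xcal_\crm,\Zcal_\crm)$-valued map on $\Ncal_0^{\ep^0}$, with $A^\loc(\ep^0)=A(x^0;\ep^0)=:A_0$; I also use that $0\in\rho(A_0)$ (forced by $\si(A)$-criticality, since $0\in\ii\Rbb\setminus\{\la_0^+,\la_0^-\}$), that $A_0\in\Hsec(\omega,\zeta_0)$ for some $\omega\in(0,\pi/2]$ and $\zeta_0>0$, and that the holomorphic branches $\la^\pm(\cdot)$ of \eqref{e:lapm0}--\eqref{e:lapm1} are already available.

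Assertion (i) is the stability of $\Hsec$-sectoriality under perturbations that are small as maps $\Xcal_\crm\to\Zcal_\crm$. Write $A^\loc(\ep)=A_0+B(\ep)$ with $\|B(\ep)\|_{\Lcal(\Xcal_\crm,\Zcal_\crm)}\to0$ as $\ep\to\ep^0$. From $A_0\in\Hsec(\omega,\zeta_0)$, the identity $A_0(\zeta-A_0)^{-1}=\zeta(\zeta-A_0)^{-1}-I$, and the fact that (since $0\in\rho(A_0)$) the bounded part of any slightly smaller sector $\Sigma_{\frac{\pi}{2}+\omega'}+\zeta_0$ with $\omega'<\omega$ is a compact subset of $\rho(A_0)$, one obtains a uniform estimate $\|(\zeta-A_0)^{-1}\|_{\Zcal_\crm\to\Xcal_\crm}\le c_0$ on that sub-sector. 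A Neumann series $(\zeta-A^\loc(\ep))^{-1}=(\zeta-A_0)^{-1}\bigl(I-B(\ep)(\zeta-A_0)^{-1}\bigr)^{-1}$ then shows that $\Sigma_{\frac{\pi}{2}+\omega'}+\zeta_0\subseteq\rho(A^\loc(\ep))$ with $\|(\zeta-A^\loc(\ep))^{-1}\|\le M'/|\zeta-\zeta_0|$ as soon as $\|B(\ep)\|<1/(2c_0)$, i.e.\ for $\ep$ in a neighborhood $\Ncal_2^{\ep^0}$. This is precisely the perturbation statement assembled in Appendix \ref{a:A} (Remark \ref{r:PerSectC}), which I would quote. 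The point to keep for (ii) is that the sectorial data $(\omega',\zeta_0,M')$ here are \emph{uniform} in $\ep\in\Ncal_2^{\ep^0}$, the Neumann bound depending on $\ep$ only through $\|B(\ep)\|$.

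For (ii) I first localize $\si(A_0)$ near $\ii\Rbb$. Since $\si(A_0)\subseteq\{\Re\zeta\le0\}$ and (by sectoriality) $\si(A_0)\subseteq\{\xi+\ii\tau:\xi\le\zeta_0,\ |\tau|\le(\zeta_0-\xi)\cot\omega\}$, the set $\si(A_0)\cap\{\Re\zeta\ge-\delta_0\}$ lies in a fixed compact box for each $\delta_0>0$; combining this compactness, closedness of $\si(A_0)$, and isolatedness of $\la_0^\pm$ (an accumulating sequence of extra spectrum with $\Re\zeta\to0$ would converge into $\si(A_0)\cap\ii\Rbb=\{\la_0^+,\la_0^-\}$, a contradiction), I obtain $\delta>0$ and $R_0>\max\{|\Im\la_0^+|,(\zeta_0+\delta)\cot\omega'\}$ with $\si(A_0)\cap\{\Re\zeta\ge-\delta\}=\{\la_0^+,\la_0^-\}$. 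Choose $\rho_0\in(0,\delta)$ so that the closed discs $\overline{\Dbb_{\rho_0}(\la_0^+)},\overline{\Dbb_{\rho_0}(\la_0^-)}$ are disjoint and contained in $\{\Re\zeta>-\delta,\ |\Im\zeta|<R_0\}$, set $\Gamma^\pm=\pa\Dbb_{\rho_0}(\la_0^\pm)$, and note the Riesz projections $P_0^\pm=\frac{1}{2\pi\ii}\oint_{\Gamma^\pm}(\zeta-A_0)^{-1}\dd\zeta$ have rank $1$ by simplicity of $\la_0^\pm$. Now two facts: (a) the uniform sectoriality of part (i) gives $\{\Re\zeta>\zeta_0\}\cup\{\Re\zeta\ge-\delta,\ |\Im\zeta|\ge R_0\}\subseteq\rho(A^\loc(\ep))$ for all $\ep\in\Ncal_2^{\ep^0}$; and (b) resolvent continuity in $\ep$, uniform on compacta of $\rho(A_0)$, applied to the closed box $\{-\delta\le\Re\zeta\le\zeta_0,\ |\Im\zeta|\le R_0\}$ with the two open discs removed (a compact subset of $\rho(A_0)$) and to $\Gamma^\pm$, yields a neighborhood $\Ncal_3^{\ep^0}\subseteq\Ncal_2^{\ep^0}$ on which this box-minus-discs also lies in $\rho(A^\loc(\ep))$ and $P_\ep^\pm=\frac{1}{2\pi\ii}\oint_{\Gamma^\pm}(\zeta-A^\loc(\ep))^{-1}\dd\zeta$ still has rank $1$. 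Together, (a) and (b) give, for $\ep\in\Ncal_3^{\ep^0}$, that $\si(A^\loc(\ep))\cap\{\Re\zeta\ge-\delta\}\subseteq\Dbb_{\rho_0}(\la_0^+)\cup\Dbb_{\rho_0}(\la_0^-)$, with $\si(A^\loc(\ep))\cap\Dbb_{\rho_0}(\la_0^\pm)$ a single simple eigenvalue, which must be the branch $\la^\pm(\ep)$. Hence $\si(A^\loc(\ep))\cap\{\Re\zeta\ge-\delta\}=\{\la^+(\ep),\la^-(\ep)\}$, with $\la^-(\ep)=\overline{\la^+(\ep)}$, $\Re\la^\pm(\ep)=\rla(\ep)$ (by \eqref{e:r}), and $\Im\la^+(\ep)\ne0$ (continuity from $\Im\la_0^+>0$). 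The equivalence now follows by reading off Definition \ref{d:SpCr}: if $\rla(\ep)=0$, then $\la^\pm(\ep)$ are a complex-conjugate pair of simple isolated eigenvalues on $\ii\Rbb$, $\si(A^\loc(\ep))\cap\ii\Rbb=\{\la^+(\ep),\la^-(\ep)\}$, and $\si(A^\loc(\ep))\cap\{\Re\zeta>0\}=\emptyset$, so $(X^\loc(\ep);\ep)$ is $\si(A)$-critical; if $\rla(\ep)>0$, then $\la^\pm(\ep)\in\si(A^\loc(\ep))\cap\{\Re\zeta>0\}$ violates Definition \ref{d:SpCr}(i); and if $\rla(\ep)<0$, then $\si(A^\loc(\ep))\cap\ii\Rbb=\emptyset$ violates Definition \ref{d:SpCr}(ii).

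The genuinely delicate part is the control of $\si(A^\loc(\ep))$ \emph{away} from $\la_0^\pm$: at large $|\Im\zeta|$, where the $\Hsec$-sectorial estimate of part (i) — and, crucially, its uniformity in $\ep$ — prevents spectrum from drifting onto or across $\ii\Rbb$ near infinity; and in small discs about $\la_0^\pm$, where constancy of the ranks of the Riesz projections pins down exactly one simple eigenvalue per disc. The remaining ingredients — the Neumann estimate, the compactness argument, and the final case distinction — are routine once these two localizations are in place.
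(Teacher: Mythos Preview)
Your proof is correct and follows essentially the same approach as the paper's. The paper's own proof is terse: it observes that $\si(A_0)\cap\{\Re\zeta>\zeta_1\}=\{\la_0^+,\la_0^-\}$ for some $\zeta_1<0$ and then simply invokes Proposition \ref{p:PerSectC} and Remark \ref{r:PerSectC} from Appendix \ref{a:A}, which package exactly the perturbation ingredients you wrote out in detail --- the uniform $\Hsec$-sectoriality via a Neumann-series resolvent bound, the compact-box localization away from the two discs, and the rank-one Riesz projections tracking $\la^\pm(\ep)$. Your argument is thus an inlined version of that appendix proposition, followed by the same case distinction on the sign of $\rla(\ep)$.
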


%-------------------------

\begin{proof} 
Since $(x^0;\ep^0)$ is $\si(A)$-critical and $A_0 := A(x^0;\ep^0)$ is $\Hsec$-sectorial, there exists $\zeta_1<0$ such that the two simple eigenvalues $\la_0^\pm$ are the only points of $\si(A_0) $ in the half-plane $\{ \Re \zeta > \zeta_1 \}$. 
Using Remark \ref{r:PerSectC}  and Proposition \ref{p:PerSectC} one can choose the neighborhoods $\Ncal_2^{\ep^0}$ and $\Ncal_3^{\ep^0}   \subset \Ncal_2^{\ep^0} \cap \Ncal_1^{\ep^0} $ with the desired properties. This completes the proof. 
\end{proof}

\begin{definition} \label{d:r-regular}
Let $(x^0;\ep^0)$ be an equilibrium point. We say that $(x^0;\ep^0)$ is an \emph{$\rla$-regular equilibrium point} with an $\rla$-regular $n$-parameter $\ep^0$ if $\ep^0$ is a regular point of  $\rla$, i.e., if $\nabla \rla(\ep^0) \neq 0$, where$\nabla \rla(\ep^0) = (\pa_{\ep_1} \rla (\ep^0),  \pa_{\ep_2} \rla (\ep^0), \dots, \pa_{\ep_n} \rla(\ep^0))$ is the gradient at $\ep^0$ of the function $\rla$ introduced in \eqref{e:r}.
If $\nabla \rla(\ep^0) = 0$, the equilibrium point $(x^0;\ep^0)$ and the associated $n$-parameter $\ep^0$ are said to be \emph{$\rla$-singular}.   
\end{definition}

Recall that a \emph{hypersurface} in an  $n$-dimensional manifold is by definition an  $(n-1)$-dimensional submanifold. 

\begin{proposition} \label{p:SpCrManif2}
Let $\wh \Mcal_\Hsec^\rregH $ be the set of all $\rla$-regular $\si(A)$-critical equilibrium points $(x;\ep)$
such that $A(x;\ep) $ is $\Hsec$-sectorial. 
Then $\wh \Mcal_\Hsec^\rregH $ is an analytic $(n-1)$-dimensional manifold in $\Xcal \times \Rbb^n$.
% The set
%\begin{equation*} \label{e:SpCrManif2} 
%	\wh \Mcal_\Hsec^\rregH 
%	:= 
%	\{(x;\ep) \in F^{-1}(\{0\}) \ : \ A(x;\ep) \in \Hsec 
%	\text{ and $(x;\ep)$ is an $\rla$-regular $\si(A)$-critical point}\} 
%\end{equation*} 
%is an analytic $(n-1)$-dimensional manifold in $\Xcal \times \Rbb^n$.
\end{proposition}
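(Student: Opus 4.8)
The plan is to prove the statement locally: I will show that every point $(x^0;\ep^0) \in \wh\Mcal_\Hsec^\rregH$ has an open neighborhood in $\Xcal \times \Rbb^n$ in which $\wh\Mcal_\Hsec^\rregH$ coincides with the image of a real analytic embedding of an open piece of $\Rbb^{n-1}$. Since all these local descriptions are subsets of one and the same ambient space, they glue automatically into a single analytic $(n-1)$-dimensional embedded submanifold, consistently with the notion of local equilibrium manifold used earlier. The only ingredients needed are Proposition~\ref{p:SpCrManif1} and the real analytic implicit-function (regular-value) theorem.

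First I would fix $(x^0;\ep^0)\in\wh\Mcal_\Hsec^\rregH$ and apply Proposition~\ref{p:SpCrManif1}. Since $(x^0;\ep^0)$ is $\si(A)$-critical with $A(x^0;\ep^0)\in\Hsec$, this gives: the real analytic local equilibrium parametrization $X^\loc\colon\Ncal_0^{\ep^0}\to\Ncal_0^{x^0}$ (which exists because $\si(A)$-criticality forces $0\in\rho(A(x^0;\ep^0))$); the real analytic function $\rla=\Re\la^\pm$ on $\Ncal_1^{\ep^0}\subseteq\Ncal_0^{\ep^0}$; and open $\Rbb^n$-neighborhoods $\Ncal_2^{\ep^0}$ and $\Ncal_3^{\ep^0}\subseteq\Ncal_2^{\ep^0}\cap\Ncal_1^{\ep^0}$ of $\ep^0$ such that $A^\loc(\ep)=A(X^\loc(\ep);\ep)$ is $\Hsec$-sectorial for all $\ep\in\Ncal_2^{\ep^0}$ and, for all $\ep\in\Ncal_3^{\ep^0}$, the equilibrium point $(X^\loc(\ep);\ep)$ is $\si(A)$-critical if and only if $\rla(\ep)=0$. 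Since $(x^0;\ep^0)$ is $\rla$-regular, $\nabla\rla(\ep^0)\neq 0$; as $\nabla\rla$ is continuous (indeed real analytic), I may shrink to an open neighborhood $\Ncal_4^{\ep^0}\subseteq\Ncal_3^{\ep^0}$ of $\ep^0$ on which $\nabla\rla$ never vanishes. Then $0$ is a regular value of $\rla|_{\Ncal_4^{\ep^0}}$, so by the real analytic implicit-function theorem $Z:=\rla^{-1}(\{0\})\cap\Ncal_4^{\ep^0}$ is a real analytic $(n-1)$-dimensional submanifold of $\Rbb^n$. Finally the real analytic map $\Phi\colon\Ncal_4^{\ep^0}\to\Xcal\times\Rbb^n$, $\Phi(\ep):=(X^\loc(\ep);\ep)$, is an analytic embedding: it is injective and an immersion, and a homeomorphism onto its image, since its $\Rbb^n$-component is the identity, whose inverse is the continuous projection $\Xcal\times\Rbb^n\to\Rbb^n$. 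Hence $\Phi(Z)$ is a real analytic $(n-1)$-dimensional submanifold of $\Xcal\times\Rbb^n$.

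The remaining point is to identify $\Phi(Z)$ with $\wh\Mcal_\Hsec^\rregH\cap(\Ncal_0^{x^0}\times\Ncal_4^{\ep^0})$. For $\Phi(Z)\subseteq\wh\Mcal_\Hsec^\rregH$: if $\ep\in Z$ then $x:=X^\loc(\ep)$ is an equilibrium point, $\rla(\ep)=0$ makes $(x;\ep)$ a $\si(A)$-critical point by Proposition~\ref{p:SpCrManif1}(ii), $\nabla\rla(\ep)\neq 0$ makes it $\rla$-regular, and $A(x;\ep)=A^\loc(\ep)\in\Hsec$ by Proposition~\ref{p:SpCrManif1}(i). For the reverse inclusion: if $(x;\ep)\in\wh\Mcal_\Hsec^\rregH$ with $x\in\Ncal_0^{x^0}$ and $\ep\in\Ncal_4^{\ep^0}$, then $(x;\ep)$ is an equilibrium point with $x\in\Ncal_0^{x^0}$ and $\ep\in\Ncal_0^{\ep^0}$, so the uniqueness in the local equilibrium parametrization forces $x=X^\loc(\ep)$, while $\si(A)$-criticality forces $\rla(\ep)=0$ by Proposition~\ref{p:SpCrManif1}(ii), i.e. $\ep\in Z$. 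Thus, near each of its points, $\wh\Mcal_\Hsec^\rregH$ agrees with an analytic $(n-1)$-dimensional submanifold of $\Xcal\times\Rbb^n$, which proves the assertion.

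I expect the main obstacle to be bookkeeping rather than conceptual: keeping the nested neighborhoods $\Ncal_1^{\ep^0}\supseteq\Ncal_3^{\ep^0}\supseteq\Ncal_4^{\ep^0}$ straight, and — the one genuinely substantive point — making sure there are no spurious equilibria near $(x^0;\ep^0)$ outside the graph of $X^\loc$, which is exactly what the analytic implicit-function theorem behind Proposition~\ref{p:SpCrManif1} guarantees once $0\in\rho(A(x^0;\ep^0))$.
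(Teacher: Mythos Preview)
Your proposal is correct and follows essentially the same approach as the paper: invoke Proposition~\ref{p:SpCrManif1} to identify $\si(A)$-criticality with $\rla=0$ on a small neighborhood, apply the analytic implicit-function theorem at an $\rla$-regular point to obtain an $(n-1)$-dimensional hypersurface in parameter space, and then lift via $\ep\mapsto(X^\loc(\ep);\ep)$. The paper's argument is considerably terser and leaves implicit several of the steps you spell out---in particular the explicit verification that $\Phi$ is an embedding and the two inclusions identifying $\Phi(Z)$ with $\wh\Mcal_\Hsec^\rregH$ in the product neighborhood---so your version is a legitimate fleshing-out rather than a different route.
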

\begin{proof}
Let $(x^0;\ep^0)$ be an $\rla$-regular equilibrium point.
Locally in the neighborhood $\Ncal_3^{\ep^0}$ from Proposition \ref{p:SpCrManif1}, operators $A^\loc (\ep)$ are $\Hsec$-sectorial and the set of 
$\si(A)$-critical $n$-parameters $\ep$ coincides with $\rla^{-1} (\{0\}) \cap \Ncal_3^{\ep^0}$. 
Applying the implicit function theorem at $\rla$-regular $n$-parameters $ \ep \in \rla^{-1} (\{0\}) \cap \Ncal_3^{\ep^0}$, we see that the set 
%\begin{equation*} \label{e:SpCrManif1}
\[	\Mcal_{\Hsec,\loc}^\rregH 
	:= 
	\{ \ep \in \Ncal_3^{\ep^0}  :   (X^\loc(\ep);\ep) \text{ is } \si(A)\text{-critical}, \ \nabla \rla(\ep) \neq 0, \ A^\loc(\ep) \in \Hsec \} 
\]	
%\end{equation*}
is an analytic hypersurface in the parameter space $\Rbb^n$.
%\begin{multline} \label{e:SpCrManif1}
%	\Mcal_{\Hsec,\loc}^\rregH 
%	:= 
%	\{\ep \in \Ncal_3^{\ep^0}   : \text{ $(X^\loc (\ep),\ep)$ is $\si(A)$-critical, $\nabla \rla (\ep) \neq 0$, and $A^\loc (\ep) \in \Hsec$} \} \\
%\text{ is an analytic hypersurface in the parameter space $\Rbb^n$.}
%\end{multline}
Globally, this implies the statement of the proposition. 
%\begin{multline} \label{e:SpCrManif2}
%\wh \Mcal_\Hsec^\rregH := \{ (x,\ep) \in F^{-1} (\{0\})  \ : \ \text{$(x,\ep)$ is $\si(A)$-critical and $\rla$-regular,  $A (x,\ep) \in \Hsec$} \} \\
%\text{ is an analytic $(n-1)$-D manifold in $\Xcal \times \Rbb^n$}.
%\end{multline} 
\end{proof}

\subsection{Hopf bifurcations on paths\label{s:HBon Paths}}

We need a formalization of the notion of continuous families of \emph{cycles}  (confer Hopf-bifurcation theorems in \cite{K04,L12}).  
A \emph{cycle} $\psi(\cdot)$ of the dynamical system $\pa_t x = F(x;\ep)$ is its  periodic solution with the corresponding period $\Pf=\Pf(\psi)$ defined by $\Pf=\inf \{s\in (0,+\infty)  :  \psi (t) = \psi (t+s), \ t \in \Rbb\}$ assuming that periodic solutions produced by phase-shifts $t \mapsto t-t_0$ are considered to be identical. 
It is convenient to consider equilibria  as \emph{degenerate cycles} of period $0$. 

A \emph{simple path} is an injective continuous  function $\Ecal : \Ical \to \Ycal$ defined on a nondegenerate interval $\Ical \subset \RR$, where $\Ycal$ is a certain Banach space.
We use also a particular type of parametrized closed curves without self-intersections, which we call simple loops. A \emph{simple loop}  is understood in this paper as an injective continuous map $\ga : S^1 \to \Ycal$,
where $S^1$ as before is the unit circle $S^1 = \Rbb \ (\text{mod}\, 2\pi)$.

A simple loop $\wt \psi \in C(S^1;\Xcal) \cap  C^1(S^1;\Zcal) $ is called an \emph{$S^1$-parametrization of a cycle} $\psi$ of period $\Pf =\Pf(\psi)>0$ if 
%\[ \text{
$\wt \psi (2\pi  (t-t_0)/\Pf ) = \psi (t)$ for all $t \in  [t_0,t_0+\Pf] $
 with a certain $t_0 \in \Rbb$.
%, \qquad .}  \]
An $S^1$-parametrization $\wt \psi$ of a degenerate cycle $\psi \equiv x^0$ 
(i.e., an $S^1$-parametrization of an equilibrium point $x^0$) is defined as the constant function $\wt \psi (s)= x^0$,  $s\in[0,2\pi]$.  
% (For a nondegenerate cycle $\psi$, there exists a continuum of different parametrizations corresponding to different initial points $\psi (t_0)$ of paths $\psi\uph_{[t_0,t_0+\Pf]}$).
 
Consider a function $\wt \ep\colon [\tau_-,\tau_+] \to V \subset \Rbb^n$. One  says that \[\{\psi_\tau , \wt \psi_\tau (\cdot), \wt \ep (\tau)\}_{\tau \in [\tau_-,\tau_+]}\] is a \emph{continuous family of cycles} of $\pa_t x = F(x;\ep)$ \emph{passing at $\tau_0 \in (\tau_-,\tau_+)$ through an equilibrium point $(x^0;\ep^0)$} if the three following conditions are fulfilled:
\begin{itemize}
	\item[(C0)]  $\wt \psi_\tau (\cdot)$ is an $S^1$-parametrization of a cycle $\psi_\tau $ of $\pa_t x = F(x;\wt \ep(\tau))$;
	\item[(C1)]  $\wt \ep(\tau_0) = \ep^0$  and 
	$\psi_{\tau_0} $ is the degenerate cycle with $\wt \psi_{\tau_0}(t) \equiv x^0$;
% (the corresponding $S^1$-parametrization is denoted  $\wt \psi_{\tau_0}$); 
%\item[(C2)] for all $\tau \in [\tau_-,\tau_+] \setminus \{\tau_0\} $, the cycles $\psi_\tau (\cdot)$ are nondegenerate; 
	\item[(C2)] $\wt \ep \in C ([\tau_-,\tau_+];V)$ and $\Psi \colon [\tau_-,\tau_+]\times [0,2\pi]\to \Xcal$ defined by $\Psi (\tau,s)= \wt \psi_\tau (s)$ is a homotopy. 
\end{itemize}

\begin{definition} \label{d:AHbpath}
Let $\Ecal\colon (s^-,s^+) \to V \subset \Rbb^n$ be a simple path such that, for a certain $s_0 \in (s^-,s^+)$, the value $\ep^0= \Ecal(s_0) $ is a $\si(A)$-critical $n$-parameter associated with an equilibrium $x^0 = X^\loc(\ep^0)$ of  $\pa_t x = F(x;\ep^0)$.
We say that a Poincaré--Andronov bifurcation takes place at $(x^0;\ep^0)$ on the path $\Ecal$  if there exists 
a subinterval $(s_-,s_+) \subset (s^-,s^+)$ with $s_0 \in (s_-,s_+)$ having the following properties:
\begin{itemize}
	\item[(i)] for each $s \in (s_-,s_0)$, the state $X^\loc (\Ecal (s))$ is a spectrally exponentially stable  equilibrium of $\pa_t x = F (x;\Ecal(s))$;
	\item[(ii)] for each $s \in (s_0,s_+)$, the state $X^\loc(\Ecal(s))$ is a spectrally unstable equilibrium of $\pa_t x = F (x;\Ecal(s))$;
	\item[(iii)] there exist a continuous function $\wt s\colon [\tau_-,\tau_+] \to (s_-,s_+)$ and 
a continuous family of cycles 
	 	 $\{\psi_\tau, \wt \psi_\tau (\cdot), \Ecal(\wt s(\tau))\}_{\tau  \in [\tau_-,\tau_+]}$ 
 	passing at $\tau_0 $ through $(x^0;\ep^0)$
 	so that the cycle $\psi_\tau$ is nondegenerate for $\tau \neq \tau_0$.
\end{itemize}
\end{definition}

A Hopf bifurcation w.r.t. a 1-dimensional parameter is a special type of Poincaré--Andronov bifurcation with an additional condition on the behavior of the perturbed eigenvalues $\la^\pm$, which can be adapted to bifurcations on paths in the following way.

\begin{definition}[Hopf transversality condition] \label{d:HopfTr}
Let $\Ecal\colon (s^-,s^+) \to V \subset \Rbb^n$ be a simple path such that $(x^0;\ep^0) = (x^0;\Ecal(s_0)) $ is a $\si(A)$-critical point for a certain $s_0 \in (s^-,s^+)$. Let $\la^\pm (\cdot)$ be the two analytic functions defined in a vicinity of $\ep^0$ by \eqref{e:lapm0}--\eqref{e:lapm1} such that $\la^+ (\ep)$ and $\la^- (\ep)=\overline{\la^+ (\ep)}$ are two simple isolated eigenvalues of $A^\loc (\ep)$ with the property that $\la^\pm_0 = \la^\pm (\ep^0) $ belong to $ \ii \Rbb \cap \si (A (x^0;\ep^0))$. We say that, on the path $\Ecal$, \emph{the Hopf transversality condition is satisfied at} $(x^0;\Ecal(s_0))$   if 
 \begin{equation} \label{e:HopfTr}
% \text{ \quad  $
\pa_s \Re \la^{\pm} (\Ecal(s))|_{s=s_0} >0 .
%$ .}
 \end{equation}
\end{definition}

Under certain assumptions in the 1-parameter case, 
%on smoothness and under the condition of $\Hbb$-sectoriality of $A(x^0;\ep^0)$, 
the Hopf transversality condition implies  the existence of a bifurcation of Poincaré--Andronov--Hopf type, as well as a variety of additional properties of the branching curve of cycles (see the Hopf bifurcation theorems in \cite{K04,DvGLW12,L12}).
In Definition \ref{d:Hbpath} below, we adapt these properties to the $n$-parameter case combining them with the approach of \cite{DK13}.

Slightly modifying the notation of \cite{K04}, we denote by $ C^{0,1} (S^1;\Ycal)$ the  Banach space of the functions that are  Lipschitz continuous on $S^1$ with values in a Banach space $\Ycal$.
The Banach space $ C^{1,1} (S^1;\Ycal)$ consists of $C^1$-functions  $g:S^1 \to \Ycal$ such that the derivative $g'$ belongs to $ C^{0,1} (S^1;\Ycal)$ (the norms in $ C^{k,1} (S^1;\Ycal)$ are introduced in the standard way, see  \cite{K04}).

%Let $k \in \Nbb$. 
A continuous family $\{\psi_\tau , \wt \psi_\tau (\cdot), \wt \ep(\tau)\}_{\tau \in [\tau_-,\tau_+]}$ of cycles passing at $\tau_0 $ through an equilibrium point $(x^0;\ep^0)$ is said to be a \emph{$C^1$-family of cycles passing through $(x^0;\ep^0)$ at $\tau_0$} 
if the following conditions are fulfilled additionally to (C0)--(C2): 
\begin{itemize}
	\item[(C3)] there exists $p \in C^1 ([\tau_-,\tau_+];\Rbb)$ such that 
	$p(\tau)$ is equal to the period  $\Pf(\psi_\tau)$ of $\psi_\tau$ for all $\tau \neq \tau_0$; 
	\item[(C4)] $\wt \psi_\tau \in C^{1,1} (S^1;\Zcal) \cap C^{0,1} (S^1;\Xcal)$ for all $\tau \in [\tau_-,\tau_+]$; 
	\item[(C5)] the mapping $\tau \mapsto \{\wt \psi_\tau, \wt \ep (\tau)\}$
	defines  a $C^1$-path in the Banach space \linebreak $\left(C^{1,1}(S^1;\Zcal) \cap C^{0,1} (S^1;\Xcal)\right) \times \Rbb^n$. 
\end{itemize}

\begin{definition}[Hopf bifurcations on paths] \label{d:Hbpath}
Assume that on 
a simple $C^1$-path \linebreak $\Ecal \colon (s^-,s^+) \to \Rbb^n$ a Poincaré--Andronov bifurcation takes place at $(x^0;\ep^0)=(x^0;\Ecal(s_0))$. Assume also that on $\Ecal$ the Hopf transversality condition \eqref{e:HopfTr} holds at $(x^0;\ep^0)$ and that $\la^\pm_0 $ are the eigenvalues of $A (x^0;\ep^0)$ defined as in Definitions \ref{d:SpCr} and  \ref{d:HopfTr}.
% by the equality $\{\la^+_0,\la^-_0\} = \ii \Rbb \cap \si (A (x^0;\ep^0)) $.
Then we say that 
%\[ \text{
a \emph{Hopf bifurcation takes place at $(x^0;\ep^0)$ on  $\Ecal$} if for any interval $(s_-,s_+) \subseteq (s^-,s^+) $ containing $s_0$ there exist
%} \]
 a $C^1$-function $\wt s \colon [\tau_-,\tau_+] \to (s_-,s_+)$ and
a $C^1$-family of cycles $\{\psi_\tau, \wt \psi_\tau(\cdot),\Ecal (\wt s (\tau))\}_{\tau  \in [\tau_-,\tau_+]}$
of $\pa_t x = F (x;\ep)$ 
passing at $\tau_0 = \frac{\tau_- +\tau_+}{2}$ through $(x^0;\ep^0)$ such that  for small enough $\de>0$ the following statements hold true:
% (additionally to \eqref{e:HopfTr} and to statements (i)-(iii) of  Definition \ref{d:AHbpath}):
\begin{enumerate}
	\item[(iv)] the `period function' $p (\tau)$ defined by (C3) satisfies $p (\tau_0) = \frac{2\pi}{\Im \la_0^+}$;
	\item[(v)] $\psi_{\tau_0 - \wt \tau} = \psi_{\tau_0 + \wt \tau}$, $\Ecal(\wt s(\tau_0 - \wt \tau)) = \Ecal(\wt s(\tau_0 + \wt \tau))$, and 
	%\[ 	 \text{
	 $\wt \psi_{\tau_0 - \wt \tau} (t )= \wt \psi_{\tau_0 +\wt \tau}  \left( t+ \pi  \right)  $  for all $\wt \tau \in (-\de, \de)$;
	%}\] 
		\item[(vi)] for  $s \in (s_0-\de,s_0+\de)$, the existence of a nondegenerate cycle  $\psi$ of the dynamical system $\pa_t x = F (x;\Ecal(s))$ with 
an $S^1$-parametrization 
\[
\wt \psi \in \Bbb_{\de } \left(\one x^0;C^{1,1} (S^1;\Zcal) \cap C^{0,1} (S^1;\Xcal) \right) 
\]
and with the period  $\Pf (\psi) \in (p(\tau_0)-\de,p(\tau_0)+\de)$
implies 
$\psi =\psi_\tau$ \ and \ $s = \wt s (\tau)$ \ for a certain $\tau  \in [\tau_-,\tau_+]$  (i.e., implies the uniqueness of the branching curve of cycles).
\end{enumerate}
Here $\one x^0$ is the constant function equal to $x^0$ for all $t$. 
\end{definition}

%The property (iv), roughly speaking, means that equilibria $X^\loc (\Ecal (s))$, $s \in (s_-,s_+)$,
% and the  cycles $\psi_\tau$, $\tau  \in [\tau_-,\tau_+]$, give all possible cycles of 
%$\pa_t x = F (x;\Ecal(s) )$ in a certain $(C^{1,1} (S^1;Z) \cap C^1 (S^1;X))\times\Rbb$-neighborhood of an equilibrium point $(x^0;s^0)$ 
%for the reduced one-parameter system $\pa_t x = F (x;\Ecal(s) )$. 

\begin{definition} \label{d:HbPoint}
If a Hopf bifurcation takes place at $(x^0;\ep^0)$ on  a certain path, we  say that $(x^0;\ep^0)$ is a \emph{Hopf--bifurcation point} and that $\ep^0$ is the associated  \emph{Hopf-bifurcation $n$-parameter}.
\end{definition}

A more general notion of a \emph{Poincaré--Andronov--bifurcation point} can be defined similarly. Obviously, every 
Hopf--bifurcation point is also a Poincaré--Andronov--bifurcation point. The converse generally is not true.

\begin{remark} \label{r:RegNontan}
Assume that a Hopf bifurcation takes place on a path $\Ecal$ at $(x^0;\ep^0)=(x^0;\Ecal(s_0))$. Then $(x^0;\ep^0)$ is $\si(A)$-critical and $\rla(\ep^0)=0$. Moreover, the Hopf transversality condition \eqref{e:HopfTr} yields 
\begin{gather} \label{e:E'gradr} 
	\pa_s \rla(\Ecal(s))|_{s=s_0}
	= 
	\<\pa_s \Ecal(s_0),\nabla \rla(\ep^0)\>_{\Rbb^n}>0 
	\end{gather}
	and, in turn, $	\pa_s \Ecal (s_0) \neq 0 $ and $\nabla \rla(\ep^0) \neq 0$.
Hence, there exists a neighborhood $\Ncal^{\ep^0}$ of $\ep^0$ such that
$
	\Mcal^\loc 
	:=
	\{\ep \in \Ncal^{\ep^0} \ : \ \rla(\ep)=0, \ \nabla\rla(\ep) \neq 0\} $ 
	 is an analytic hypersurface in $\Rbb^n$ containing $\ep^0$.
Note that \eqref{e:E'gradr} implies that $\Ecal$ passes through the hypersurface $\Mcal^\loc$ at $\ep^0 = \Ecal(s_0)$ nontangentially.
\end{remark}

\subsection{Manifolds of Hopf-bifurcation points}

The aim of this subsection is to formulate rigorously the notion of Hopf-bifurcation manifold (Hopf-bifurcation curve in the case $n=2$) and to prove the corresponding multi-parameter Hopf-bifurcation theorem, which essentially says the set of the $\rla$-regular $\si(A)$-critical  points satisfying an additional $\Hbb$-sectoriality assumption is a Hopf-bifurcation manifold. A particular feature of this multi-parameter bifurcation theorem is that the Hopf transversality condition is essentially replaced by the $\rla$-regularity (see Remark \ref{r:RegNontan}).
%Indeed, in $\rla$-regular $\si(A)$-critical  points the Hopf transversality condition is  satisfied automatically for all paths crossing the bifurcation manifold nontangentially.

A $C^1$-path $\Ecal\colon (s^-,s^+) \to \Rbb^n$ is said to be regular if $\pa_s \Ecal(s) \neq 0$ for all $s \in (s^-,s^+)$.

\begin{definition} \label{d:HbManifold}
An $(n-1)$-dimensional $C^1$-manifold $\wh \Mcal$ in $\Xcal \times \Rbb^n$ is called a \emph{Hopf-bifurcation manifold} if for every point $(x^0;\ep^0) \in \wh \Mcal$ there exist its $\Xcal \times \Rbb^n$-neighborhood $\Ncal^{x^0} \times \Ncal^{\ep^0} $ and a $C^1$-hypersurface $\Mcal^\loc$ in $\Ncal^{\ep^0} \subset  \Rbb^n$ with the following properties:
\begin{itemize}
	\item[(i)] $\wh \Mcal^\loc := \wh \Mcal \cap (\Ncal^{x^0} \times \Ncal^{\ep^0})$ is the set of all 
	Hopf-bifurcation points in $\Ncal^{x^0} \times \Ncal^{\ep^0}$.
	\item[(ii)] There exists a $C^1$-map $X^\loc\colon \Ncal^{\ep^0} \to \Ncal^{x^0}$ such that 
	$\wh \Mcal^\loc = \{(X^\loc(\ep);\ep) \ : \ \ep \in \Mcal^\loc\}$.
	\item[(iii)] On every simple regular $C^3$-path  $\Ecal\colon [s^-,s^+] \to \Rbb^n$ passing through $\ep^0$ nontangentially to the hypersurface $\Mcal^\loc$, a Hopf bifurcation takes place at $(x^0;\ep^0)$  after a possible change of orientation of $\Ecal$. %(That is, if $\Ecal(s_0) = \ep^0$  and $\pa_s \Ecal(s_0) \not \in T_{\ep^0} \Mcal^\loc$ for a certain $s_0 \in (s^-,s^+)$, then a Hopf bifurcation occurs at $(x^0;\ep^0)$ either on the path $\Ecal$, or on the reversely oriented path $\Ecal_-\colon s \mapsto \Ecal(-s)$.)
\end{itemize}
If these conditions are fulfilled, we say that $\Mcal^\loc$ is a \emph{local (parametric Hopf-) bifurcation hypersurface} that parametrizes the \emph{local Hopf-bifurcation manifold} 
$\wh \Mcal^\loc$. In the case $n=2$, we also call a connected $1$-dimensional Hopf-bifurcation manifold $\wh \Mcal$ \  \emph{a Hopf-bifurcation curve} \ and call a connected local parametric Hopf-bifurcation hypersurface  $\Mcal^\loc$ \ \emph{a local parametric Hopf-bifurcation curve}.
\end{definition}

%The following theorem is the main result of this section. Recall that the family $\Hbb$ of 
%$\Hsec$-sectorial operators is exactly the family of generators of holomorphic semigroups, 
%see \eqref{e:Hsec} and \cite{HillePhillips,Kato,L12}.
 
\begin{theorem}[multi-parameter Hopf-bifurcation theorem] \label{t:n-parHb}
Assume \eqref{h:HComp} and \eqref{h:HAn}. Let $\wh \Mcal_\Hsec^\rregH $ be the set of all $\rla$-regular $\si(A)$-critical equilibrium points $(x;\ep)$
such that $A(x;\ep) $ is $\Hsec$-sectorial. Then:
\begin{itemize}
	\item[(i)]  $\wh \Mcal_\Hsec^\rregH$ is a Hopf-bifurcation manifold.
%	\[ 	\wh \Mcal_\Hsec^\rregH := \{(x;\ep) \in F^{-1}(\{0\})\, : \, 
%	\text{$(x;\ep)$ is $\si(A)$-critical, \  $\nabla \rla (\ep) \neq 0$, \ and $A (x,\ep) \in \Hsec$} \}
%	\]	
	\item[(ii)] Assume that a $\si(A)$-critical point $(x;\ep)$ is $\rla$-singular (i.e., $\nabla \rla (\ep) = 0$). Then $(x;\ep)$ is not a Hopf-bifurcation point.
	\item[(iii)] $\wh \Mcal_\Hsec^\rregH$ is the set of all Hopf-bifurcation points $(x;\ep)$ such that 
	$A(x;\ep) $ is $\Hsec$-sectorial. 
\end{itemize}
\end{theorem}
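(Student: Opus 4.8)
The strategy is to combine the geometric description of $\wh\Mcal_\Hsec^\rregH$ already obtained in Proposition \ref{p:SpCrManif2} with a reduction, along transversal paths, to a one-parameter Hopf-bifurcation theorem of Crandall--Rabinowitz / Kielhöfer type (\cite{CR77,K04,L12}). Concretely, for part (i) I would fix an arbitrary point $(x^0;\ep^0)\in\wh\Mcal_\Hsec^\rregH$ and take the neighborhoods $\Ncal_3^{\ep^0}$, the local equilibrium parametrization $X^\loc$, the eigenvalue branches $\la^\pm(\cdot)$, and the real-analytic function $\rla$ from Section \ref{s:Hypersurf}. By Proposition \ref{p:SpCrManif1} the operators $A^\loc(\ep)$ are $\Hsec$-sectorial throughout $\Ncal_3^{\ep^0}$ and the $\si(A)$-critical parameters coincide with $\rla^{-1}(\{0\})$, while $\rla$-regularity makes $\Mcal^\loc:=\{\ep\in\Ncal^{\ep^0}:\rla(\ep)=0,\ \nabla\rla(\ep)\neq0\}$ an analytic hypersurface by the implicit function theorem; this is exactly what is needed for properties (i)--(ii) of Definition \ref{d:HbManifold}. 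Property (i) of that definition (that $\wh\Mcal^\loc$ is \emph{all} Hopf-bifurcation points nearby) follows once part (iii) of the theorem is proved, so logically I would prove (ii) and (iii) of the theorem first and then assemble (i).

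For part (ii), suppose $(x;\ep)$ is $\si(A)$-critical with $\nabla\rla(\ep)=0$. If a Hopf bifurcation took place at $(x;\ep)$ on some path $\Ecal$, then by Remark \ref{r:RegNontan} the Hopf transversality condition \eqref{e:HopfTr} would force $\<\pa_s\Ecal(s_0),\nabla\rla(\ep)\>_{\Rbb^n}>0$, which is impossible when $\nabla\rla(\ep)=0$. Hence no such path exists and $(x;\ep)$ is not a Hopf-bifurcation point. This is the easy direction and needs only the identity \eqref{e:E'gradr}.

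The core of the argument is part (iii), or rather the ``forward'' half: on a simple regular $C^3$-path $\Ecal$ through $\ep^0$ transversal to $\Mcal^\loc$, a Hopf bifurcation occurs at $(x^0;\ep^0)$. Here I would first observe that transversality plus $\nabla\rla(\ep^0)\neq0$ gives $\pa_s\rla(\Ecal(s))|_{s=s_0}=\<\pa_s\Ecal(s_0),\nabla\rla(\ep^0)\>_{\Rbb^n}\neq0$, so after possibly reversing the orientation of $\Ecal$ the Hopf transversality condition $\pa_s\Re\la^\pm(\Ecal(s))|_{s=s_0}>0$ holds. Then along the one-dimensional parameter $s$ we have: a $C^2$ (indeed analytic in $\ep$, composed with a $C^3$ path, hence $C^3$ in $s$) family of generators $A^\loc(\Ecal(s))$ of quasi-bounded holomorphic semigroups; a simple pair of eigenvalues $\la^\pm(\Ecal(s))$ crossing $\ii\Rbb$ transversally at $s_0$; and, by $\Hsec$-sectoriality together with Proposition \ref{p:SpCrManif1}, the rest of the spectrum staying in a half-plane $\{\Re\zeta\le\zeta_1\}$ with $\zeta_1<0$ uniformly for $s$ near $s_0$ — in particular no other eigenvalue (and no nonresonant accumulation) near $\ii\Rbb$, and no eigenvalue at $0$, so the non-resonance/no-other-critical-eigenvalue hypotheses of the classical theorem are met. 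Invoking the infinite-dimensional one-parameter Hopf-bifurcation theorem in the sectorial setting (the version in \cite{K04} or \cite{L12}) yields a $C^1$ branch of genuine periodic solutions $\wt\psi_\tau$ with period near $2\pi/\Im\la_0^+$, parametrized by $\tau$ near $\tau_0$, together with the pitchfork symmetry (v), the period normalization (iv), and the local uniqueness (vi) in the stated function spaces; the spectral stability/instability statements (i)--(ii) of Definition \ref{d:AHbpath} on the two sides of $s_0$ come directly from $\Re\la^\pm(\Ecal(s))$ changing sign while the remaining spectrum stays strictly in the left half-plane. This is precisely a Hopf bifurcation on $\Ecal$ in the sense of Definition \ref{d:Hbpath}, establishing Definition \ref{d:HbManifold}(iii). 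Finally, for the ``only these points'' half of part (iii): if $(x;\ep)$ is any Hopf-bifurcation point with $A(x;\ep)\in\Hsec$, then by Remark \ref{r:RegNontan} it is $\si(A)$-critical and $\rla(\ep)=0$, and by part (ii) it must be $\rla$-regular, so $(x;\ep)\in\wh\Mcal_\Hsec^\rregH$; combined with the forward direction this proves (iii), and then Definition \ref{d:HbManifold}(i) for $\wh\Mcal^\loc$ follows, completing (i).

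The main obstacle is the careful verification that the hypotheses of the chosen one-parameter Hopf-bifurcation theorem are genuinely satisfied along $\Ecal$ — in particular the \emph{uniformity} in $s$ of the spectral gap separating $\la^\pm$ from the rest of $\si(A^\loc(\Ecal(s)))$, which is where $\Hsec$-sectoriality and the perturbation results of Appendix \ref{a:A} (Remark \ref{r:PerSectC}, Proposition \ref{p:PerSectC}) do the real work, and the bookkeeping that the $C^3$-regularity of the path feeds through the analytic dependence of $A^\loc$ on $\ep$ to give enough smoothness in $s$ for the cited theorem. The periodic-orbit construction itself (Lyapunov--Schmidt on loop spaces, or the implicit function theorem after removing the $S^1$-action) is then quoted rather than redone.
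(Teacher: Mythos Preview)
Your proposal is correct and follows essentially the same route as the paper: prove (ii) first via Remark \ref{r:RegNontan}, set up the local hypersurface $\Mcal^\loc$ from Section \ref{s:Hypersurf}, reduce along a transversal $C^3$-path to a one-parameter problem, and invoke Kielh\"ofer's Hopf-bifurcation theorem \cite[Theorem I.8.2]{K04}. The one technical point you correctly flag as ``the main obstacle'' but do not name explicitly is that \cite[Section I.8]{K04} assumes compactness of the semigroup $e^{tA_0}$, which is \emph{not} assumed here; the paper (Remark \ref{r:Comp} and the proof of Proposition \ref{p:HbOnPath}) replaces this by directly verifying that $(I_\Zcal - e^{2\pi\kappa_0^{-1}A_0})|_{(I_\Zcal-P_0)\Zcal}$ is a self-homeomorphism via the spectral mapping theorem for holomorphic semigroups \cite[Corollary IV.3.12]{EN00} combined with the spectral separation coming from $\Hsec$-sectoriality, after which Kielh\"ofer's remaining arguments go through unchanged.
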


%This theorem immediately implies the following result.

%\begin{corollary} \label{c:Hsectorial}
%Assume (\ref{h:HComp}),(\ref{h:HAn}),  and that $A (x,\ep)$ is $\Hsec$-sectorial for all $(x,\ep)\in U\times V$. Then the Hopf-bifurcation manifold
% \wh M_\Hsec^\rregH = \{(x,\ep)\in F^{-1} (\{0\}) \ : \ \text{$(x,\ep)$ is $\si(A)$-critical %and  $\nabla \rla (\ep) \neq 0$}\} $
%is also the set of all Hopf-bifurcation points.
%\end{corollary}

\subsubsection{Proof of Theorem \ref{t:n-parHb}.\label{ss:ProofHb}}

\emph{Step 1. Proof of statement (ii).} If  $(x^0;\ep^0)$ is an $\rla$-singular $\si(A)$-critical point, then Remark \ref{r:RegNontan} implies that $(x^0;\ep^0)$ is not a Hopf-bifurcation point. 
%This completes the proof of statement (ii).

\vspace{1ex}

\emph{Step 2.}  Let $(x^0;\ep^0)$ be an $\rla$-regular $\si(A)$-critical point such that $A_0 = A(x^0;\ep^0)$
is $\Hsec$-sectorial. Then it follows from Section  \ref{s:Hypersurf} that  there exist an $\Xcal \times \Rbb^n$-neighborhood $\Ncal^{x^0} \times \Ncal^{\ep^0}$ of  $(x^0;\ep^0)$ and an analytic map $X^\loc \colon \Ncal^{\ep^0} \to \Ncal^{x^0}$ such that $x^0 = X^\loc (\ep^0)$ and the following properties hold:
\begin{itemize}
	\item[(a)]  the manifold  $\Gr X^\loc 	=
	\{(X^\loc(\ep);\ep) \ : \ \ep \in \Ncal^{\ep^0}\}$ is the set of all equilibrium points in $\Ncal^{x^0} \times \Ncal^{\ep^0}$;
	\item[(b)] the Taylor series at $\ep^0$ of the real analytic function $\rla (\ep) = \Re \la^+ (\ep)$  converges in $\Ncal^{\ep^0}$;
	\item[(c)] the set \
	$
	\Mcal^\loc := \{\ep \in \Ncal^{\ep^0} \ :  \ (X^\loc(\ep);\ep) \text{ is } \si(A)\text{-critical and } \nabla \rla(\ep) \neq 0\}$ \ 
	is a connected analytic hypersurface in $\Ncal^{\ep^0}$ and, for every $\ep \in \Mcal^\loc$, the operator 
	$A^\loc(\ep)$ is $\Hsec$-sectorial;
	\item[(d)] the $(n-1)$-dimensional analytic manifold $\wh \Mcal^\loc := \wh \Mcal_\Hsec^\rregH \cap (\Ncal^{x^0} \times \Ncal^{\ep^0})$  coincides with 
	$\{(X^\loc(\ep);\ep)\  : \ \ep \in \Mcal^\loc\}$ and contains $(x^0;\ep^0)$.
\end{itemize}

Remark \ref{r:RegNontan} and statement (ii), which is already proved in Step 1, imply that  $\Mcal^\loc$ satisfies condition (ii) of Definition \ref{d:HbManifold} and that $\wh \Mcal^\loc$ contains all Hopf bifurcation points in $\Ncal^{x^0} \times \Ncal^{\ep^0}$.

\emph{Step 3. We now verify property (iii) of Definition \ref{d:HbManifold}.} This property implies that 
$\wh \Mcal^\loc$ is exactly the set of all Hopf-bifurcation points in $\Ncal^{x^0} \times \Ncal^{\ep^0}$, and so, implies  statements (i) and (iii) of Theorem \ref{t:n-parHb}.

Property (iii) of Definition \ref{d:HbManifold} follows from  Proposition \ref{p:HbOnPath} given below, which can be proved essentially by the reduction to the one-parameter arguments of Kielhöfer \cite[Section I.8]{K04}. 

\begin{remark} \label{r:Comp}
Note that we drop the compactness assumption of \cite[formula (I.8.8)]{K04} which occurs to be superfluous in our settings (compare \cite[Remark I.9.2]{K04} and \cite[Theorem 9.3.3]{L12}). Indeed,
the assumption of compactness of $e^{tA_0}$ for $t>0$ is used in the proof of \cite[Proposition I.8.1]{K04} only to prove \cite[statement (I.8.20)]{K04}, which is written below as  statement \eqref{e:sieA}. In our settings, this statement is valid without the compactness assumption and can be proved, e.g., using the combination of the spectral mapping theorem for holomorphic semigroups \cite[Corollary IV.3.12]{EN00} and 
 the spectrum decomposition theorem \cite[Theorem III.6.17]{Kato}. Essentially, the compactness assumption is replaced in our settings by the assumption that $\la_0^\pm$ are isolated simple eigenvalues of  $A_0 \in  \Hbb$.
\end{remark}

\begin{proposition} \label{p:HbOnPath}
Let $(x^0;\ep^0)$ be a
$\si(A)$-critical point such that $A_0 = A(x^0;\ep^0)$ is $\Hsec$-sectorial. 
Let $s_0 \in (s^-,s^+)$, and let $\Ecal\colon (s^-,s^+) \to \Rbb^n$ be a simple $C^3$-path with $\Ecal(s_0)=\ep^0$ such that  the following transversality condition holds true 
\begin{gather} \label{e:trGeom}
	\< \pa_s \Ecal(s_0) | \nabla \rla (\ep^0)  \>_{\Rbb^n} >0 .
\end{gather}
Then a Hopf bifurcation takes place at $(x^0;\ep^0)$ on the path $\Ecal$.
\end{proposition}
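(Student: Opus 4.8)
The plan is to reduce Proposition~\ref{p:HbOnPath} to the classical one-parameter Hopf-bifurcation theorem of Kielhöfer \cite[Section I.8]{K04}, taking $s$ (the parameter along the path $\Ecal$) as the one-dimensional bifurcation parameter. First I would set up the one-parameter family of maps $\wt F(x;s) := F(x;\Ecal(s))$, defined on an $\Xcal \times \Rbb$-neighbourhood of $(x^0;s_0)$. Since $F$ is real analytic by \eqref{h:HAn} and $\Ecal$ is $C^3$, the composed map $\wt F$ is $C^3$, which is enough regularity for \cite{K04}. Its Fréchet derivative in $x$ at the equilibrium is $A(X^\loc(\Ecal(s));\Ecal(s)) = A^\loc(\Ecal(s))$, and at $s=s_0$ this equals $A_0 = A(x^0;\ep^0)$, which by hypothesis is $\Hsec$-sectorial and has the pair $\la_0^\pm \in \ii\Rbb$ of simple isolated eigenvalues, all remaining spectrum sitting strictly in the open left half-plane (by $\si(A)$-criticality, Definition~\ref{d:SpCr}).

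Next I would verify the hypotheses of \cite[Theorem I.8.2]{K04} one by one. The smooth curve of equilibria is supplied by $X^\loc$ from Section~\ref{s:Hypersurf} (the analytic implicit function theorem applies because $0 \in \rho(A_0)$), giving $\wt F(X^\loc(\Ecal(s));s)=0$ for $s$ near $s_0$. The simple-eigenvalue and spectral-gap conditions at $s=s_0$ follow from $\si(A)$-criticality together with $\Hsec$-sectoriality; the latter guarantees, via the uniform resolvent estimate and the sectorial resolvent geometry, that away from $\la_0^\pm$ the spectrum of $A^\loc(\Ecal(s))$ stays uniformly bounded away from $\ii\Rbb$ for $s$ near $s_0$. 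Here I would invoke exactly the substitute for the compactness assumption flagged in Remark~\ref{r:Comp}: the needed statement \eqref{e:sieA} on the spectrum of $e^{tA_0}$ follows from the spectral mapping theorem for holomorphic semigroups \cite[Corollary IV.3.12]{EN00} combined with the spectral decomposition theorem \cite[Theorem III.6.17]{Kato}, since $\la_0^\pm$ are isolated simple eigenvalues. The Hopf transversality condition $\pa_s \Re\la^\pm(\Ecal(s))|_{s=s_0} = \<\pa_s\Ecal(s_0)\,|\,\nabla\rla(\ep^0)\>_{\Rbb^n}$ is precisely the hypothesis \eqref{e:trGeom}, so it is strictly positive; by the chain rule applied to the holomorphic function $\la^+$ from \eqref{e:lapm0}--\eqref{e:lapm1} this is legitimate, and after a possible reversal of orientation of $\Ecal$ (i.e.\ replacing $s$ by $-s$) we may assume the sign is as required in Definition~\ref{d:HopfTr}.

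With these hypotheses in place, \cite[Theorem I.8.2]{K04} produces a $C^1$-branch of nontrivial periodic solutions of $\pa_t x = \wt F(x;s)$ bifurcating from the equilibrium curve at $(x^0;s_0)$, together with a $C^1$ period function, the pitchfork-type symmetry in the bifurcation parameter, and the local uniqueness of the branch in a $C^{1,1}(S^1;\Zcal)\cap C^{0,1}(S^1;\Xcal)$-ball. I would then translate this output into the language of Definitions~\ref{d:AHbpath}--\ref{d:Hbpath}: the branch, reparametrized by $\tau$ so that $\tau_0 = (\tau_-+\tau_+)/2$ corresponds to the equilibrium, gives the continuous (in fact $C^1$) family of cycles $\{\psi_\tau,\wt\psi_\tau(\cdot),\Ecal(\wt s(\tau))\}$ with $\wt s$ a $C^1$ function, verifying (C0)--(C5); the period at $\tau_0$ equals $2\pi/\Im\la_0^+$ (item (iv)); the symmetry $\psi_{\tau_0-\wt\tau}=\psi_{\tau_0+\wt\tau}$ with the half-period phase shift (item (v)) is the standard $\Zbb_2$-symmetry of the Hopf branch; and items (i)--(ii) of Definition~\ref{d:AHbpath} (spectral stability for $s<s_0$, instability for $s>s_0$) follow from \eqref{e:trGeom}, since $\rla(\Ecal(s))$ changes sign transversally at $s_0$ while the rest of the spectrum is controlled by $\Hsec$-sectoriality.

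I expect the main obstacle to be the bookkeeping around the spectral-gap / sectoriality argument, i.e.\ the careful justification that removing the compactness hypothesis of \cite{K04} is harmless — precisely making Remark~\ref{r:Comp} rigorous by showing that $\si(e^{tA^\loc(\Ecal(s))}) \setminus \{0\} = e^{t\si(A^\loc(\Ecal(s)))}$ holds uniformly enough in $s$ to re-run Kielhöfer's Lyapunov--Schmidt reduction, and that the perturbed eigenvalues $\la^\pm(\Ecal(s))$ remain the only spectrum near $\ii\Rbb$. This uses Remarks~\ref{r:PerRes}--\ref{r:PerSectC} and Proposition~\ref{p:PerSectC} from Appendix~\ref{a:A}. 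The rest — checking that $C^3$-regularity of $\Ecal$ suffices, and matching the conclusions to Definition~\ref{d:Hbpath} term by term — is routine but should be written out with care, particularly the orientation-reversal step so that the sign in \eqref{e:HopfTr} is positive as demanded.
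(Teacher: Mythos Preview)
Your proposal is correct and follows essentially the same route as the paper's own proof: reduce to the one-parameter setting along $\Ecal$, replace Kielh\"ofer's compactness hypothesis by the spectral-mapping argument for holomorphic semigroups (exactly Remark~\ref{r:Comp}), and then invoke \cite[Theorem~I.8.2]{K04}. The only cosmetic differences are that the paper shifts the equilibrium to the origin by defining $\wt F(x;s):=F\bigl(x+X^\loc(\Ecal(s));\Ecal(s)\bigr)$ rather than your $\wt F(x;s):=F(x;\Ecal(s))$, and that the orientation-reversal step you mention is unnecessary since \eqref{e:trGeom} already gives the positive sign required in Definition~\ref{d:HopfTr}.
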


\begin{proof}[Proof of Proposition \ref{p:HbOnPath}.]
Without loss of generality, we can assume that $\Ecal(s) \in \Ncal^{\ep^0}$ for all $s \in (s^-,s^+)$ and consider the reduced dynamical system 
\begin{gather} \label{e:wtF}
	\text{$\pa_t x = \wt F(x;s)$ 
	\quad with \quad 
	$\wt F(x;s) := F \left(x+X^\loc(\Ecal(s));\Ecal(s)\right)$},
\end{gather} 
depending only on the $1$-dimensional parameter $s \in (s^-,s^+)$. The complexification of the Fréchet derivative $D_x \wt F(x;s)$ is equal to \[
\wt A(x;s) = A(x+X^\loc(\Ecal(s));\Ecal(s)).\] In particular, $(0;s^0)$ is an equilibrium point of \eqref{e:wtF} and $\wt A(0;s_0) =  A_0$.

Since $(x^0;\ep^0)$ is $\si(A)$-critical,  $\si(A_0) \cap \ii \Rbb$  consists of two isolated simple eigenvalues $\la^\pm_0 = \pm \ii \ka_0$ of  $A_0$, where $\ka_0>0$ is a certain number.
There exists a bounded domain $\Dcal \subset \Cbb$ with a positively oriented smooth Jordan curve as its boundary $\pa \Dcal$ satisfying $\si(A_0) \cap \Dcal = \{\la_0^+,\la_0^-\}$ and $\pa \Dcal \subset \rho(A_0)$.   
We denote by $P_0 \in \Lcal(\Zcal)$ the Riesz  projection 
$ \displaystyle 
	P_0 
	= 
	- \frac{1}{2\pi\ii} \oint_{\pa \Dcal} (A_0 - \la I_\Zcal)^{-1} \dd \la
$
corresponding to $\pa \Dcal$. 
%In our case $P_1$ commutes with $A_0$ and projects onto the linear hull of the two eigenvectors of $A_0$ corresponding to $\la^\pm_0$.
%which in our case is a commuting with $A_0$ projector on the linear hull of the two eigenvectors of $A_0$ correponding to $\la^\pm_0$. 
The closed subspaces $ P_0 \Zcal$ and $(I_{\Zcal}-P_0) \Zcal$ are the images of the  projection $P_0$ and of the complementary  projection $I_{\Zcal}-P_0$, respectively. By the spectrum decomposition theorem \cite[Theorem III.6.17]{Kato} each of the subspaces $ P_0 \Zcal$ and $(I_{\Zcal}-P_0) \Zcal$ is an invariant subspace of  $A_0$, and the direct sum decomposition 
(not necessarily orthogonal)
$ \Zcal = P_0   \Zcal \ \oplus  \ (I_{\Zcal}-P_0) \Zcal $ takes place. These two invariant subspaces are also invariant for the quasi-bounded holomorphic semigroup $\{\ee^{tA_0}\}_{t \ge 0}$ (see Remark \ref{r:sectorial}). In particular, the restriction $(I_{\Zcal}-\ee^{2\pi \ka_0^{-1} A_0}) |_{(I_{\Zcal}-P_0) \Zcal}$ of $I_{\Zcal}-\ee^{2\pi \ka_0^{-1} A_0}$ to the invariant subspace  $(I_{\Zcal}-P_0) \Zcal$ is an operator in the subspace  $(I_{\Zcal}-P_0) \Zcal$. 

The following statement holds true: 
\begin{equation} \label{e:sieA}
\text{$(I_{\Zcal}-\ee^{2\pi \ka_0^{-1} A_0}) |_{(I_{\Zcal}-P_0) \Zcal} $ is a self-homeomorphism of $ (I_{\Zcal}-P_0) \Zcal $.}
\end{equation}
(compare \cite[Proposition 4.4.8]{L12}). Indeed,
 since $A_0 \in \Hsec$ and since $(x^0;\ep^0)$ is $\si(A)$-critical,  condition (ii) of Definition \ref{d:SpCr}
implies that there exists a constant $\zeta_0<0$ such that 
\begin{gather} \label{e:siAinf}
\text{$\si \bigl( A_0 |_{(I_{\Zcal}-P_0) \Zcal} \bigr) = \si(A_0) \setminus \{\la_0^+,\la_0^-\} $ is a subset of $ \{ \zeta \in \Cbb : \Re \zeta < \zeta_0\}$}.
\end{gather} 
Besides, 
 $A_0 |_{(I_{\Zcal}-P_0) \Zcal} $ is a generator of a quasi-bounded holomorphic semigroup in $(I_{\Zcal}-P_0) \Zcal$. Combining the spectral mapping theorem for  quasi-bounded holomorphic semigroups \cite[Corollary IV.3.12]{EN00} with \eqref{e:siAinf}, we see that $1 \not \in \sigma (\ee^{2\pi \ka_0^{-1} A_0} |_{(I_{\Zcal}-P_0) \Zcal)})$.
This completes the proof of \eqref{e:sieA}.
Now, the arguments of  the proof of \cite[Theorem I.8.2]{K04} on Hopf bifurcations w.r.t. a $1$-dimensional parameter can be applied to $\pa_t x = \wt F(x;s)$ 
(see Remark \ref{r:Comp}).
 \end{proof}

This completes the proofs of Proposition \ref{p:HbOnPath} and of Theorem \ref{t:n-parHb}.

\subsection{\label{ss:singular} The two-parameter case and bifurcation curves near $\rla$-singular points}

We have shown that the set of $\si(A)$-critical points is described locally by the equation $\rla(\ep) =0$. In the two-parameter case $n=2$, the branching of the curves of the zero-locus for $\rla(\cdot)$ can be studied with the use of the Newton polygon and Puiseux series, see  \cite{VT74,K04}.

Motivated by the  rimming-flow equation \eqref{eq:PDE2}, we consider a relatively simple case of an $\rla$-singular point with a nonzero Hessian determinant $\det \Hf_{\rla}$ of $\rla$. We  describe for this case the parametric Hopf-bifurcation curves contained in the  local zero-locus of $\rla$.

To this end, let $\ep^0 = (\ep^0_1,\ep^0_2) \in V \subset \Rbb^2$ be an $\rla$-singular $\si(A)$-critical 2-parameter associated with an equilibrium $x^0$. We denote by 
\begin{equation} \label{e:Hrla}
	\Hf_{\rla} (\ep^0) 
	= 
	\begin{pmatrix} 
	\pa_1^2 \rla(\ep^0)  & \pa_1 \pa_2 \rla(\ep^0) \\ 
	\pa_2 \pa_1 \rla(\ep^0) & \pa_2^2 \rla(\ep^0) \end{pmatrix} 
	= 
	\begin{pmatrix} 
	2 \rla_{2,0} & \rla_{1,1}  
	\\ 
	\rla_{1,1}  &  2 \rla_{0,2}  
	\end{pmatrix}
\end{equation} 
the Hessian matrix of $\rla$ at $\ep^0$ and assume that its determinant $\det \Hf_{\rla} (\ep^0) $ is nonzero. In this section, 
%\[ \text{
$\rla_{i,j}=\Re \la^+_{i,j}$ are the Taylor coefficients of $\rla (\cdot)$ at $\ep^0$ (see (\ref{e:r})) and  $\pa_j = \pa_{\ep_j}$.
%} \]

Note that the case $\det \Hf_{\rla} (\ep^0) > 0 $ is trivial from the point of view of Hopf bifurcations since $(x^0;\ep^0)$ is an isolated $\si(A)$-critical point and $\rla (\cdot)$ preserves its sign in a sufficiently small punctured neighborhood of $\ep^0$. Hence, $(x^0;\ep^0)$ is not a  Poincaré--Andronov bifurcation point, and so, is not a Hopf--bifurcation point.

The following lemma, describing the local zero-locus of $\rla(\cdot)$ in the case $\det \Hf_{\rla}(\ep^0) <0$, can be obtained by a combination of several formulas of \cite[Section I.2.7]{VT74} which consider one-by-one the corresponding cases of the Newton diagram. By $\sign(\cdot)$ we denote the signum function. 

\begin{lemma}[\cite{VT74}] \label{l:n=2Det<0}
Let $r\colon V \to \Rbb$ be a real analytic function with 
\[\text{
$r(\ep^0) = 0, \nabla r(\ep^0) = 0$, and 
$\det \Hf_{r}(\ep^0) <0$, 
}\]	
%\end{equation*}
where 
%\begin{equation*}
\ $	\Hf_r (\ep^0) 
	= 
	\begin{psmallmatrix}
	2 r_{2,0}   & r_{1,1}  
	\\ 
	r_{1,1}  &  2 r_{0,2}  
	\end{psmallmatrix}  
$  is the Hessian matrix of $r$. 
Then:
\begin{itemize}
	\item[(i)] There exists a neighborhood $\Ncal^{\ep^0}$ of $\ep^0$ so that the set
	% the local zero-locus 
	$
	\{\ep \in \Ncal^{\ep^0}  :  \ r(\ep)=0\}
	$
%	of $r$ 
	consists of the union $\Gamma^+ \cup \Gamma^-$ of two simple regular real analytic curves $\Gamma^\pm$ having a nontangential intersection at $\ep^0$ as their only  intersection point. 
	\item[(ii)] Regular parametrization \quad $\gamma^\pm\colon (s^\pm_1,s^\pm_2) \to \Rbb^2$ \quad of \ $\Gamma^\pm$ can be chosen such that $0 \in (s^\pm_1,s^\pm_2)$, \ $\gamma^\pm(0) = \ep^0$, \ and the tangential vectors at $\ep^0$ are 
\end{itemize}
	\begin{gather} \label{e:paGa-}
		\pa_s \gamma^- (0)  
		= 
		\begin{pmatrix} 
		-2 r_{0,2} 
		\\   
		r_{1,1} + \sign (r_{1,1}) \sqrt{-\det \Hf_r} \end{pmatrix},
		\\
				\pa_s \gamma^+ (0)  
		= 
		\begin{pmatrix} 
		r_{1,1} + \sign (r_{1,1}) \sqrt{-\det \Hf_r} \\ 
		-2 r_{2,0} 
		\end{pmatrix}. 
		\label{e:paGa+}
	\end{gather}
\end{lemma}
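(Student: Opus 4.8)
The plan is to work in local coordinates with $\ep^0 = 0$ (translating $V$ if necessary) and to read the two branches of $\{r=0\}$ off the quadratic part of $r$. Since the constant and linear Taylor coefficients of $r$ at $0$ vanish by hypothesis, $r(\ep) = Q(\ep) + O(|\ep|^3)$ with $Q(\ep) := r_{2,0}\ep_1^2 + r_{1,1}\ep_1\ep_2 + r_{0,2}\ep_2^2 = \tfrac12\langle\ep,\Hf_r(\ep^0)\ep\rangle$, and the assumption $\det\Hf_r(\ep^0) < 0$ --- equivalently $D := -\det\Hf_r(\ep^0) = r_{1,1}^2 - 4r_{2,0}r_{0,2} > 0$ --- says exactly that $Q$ is an indefinite form whose zero set is the union of two \emph{distinct} lines through the origin. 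First I would dispatch statement (ii), which is independent of how (i) is proved: if $\gamma(s)$ is any regular real-analytic curve lying in $\{r=0\}$ with $\gamma(0)=0$ and $\partial_s\gamma(0)=v\neq0$, then expanding $0\equiv r(\gamma(s)) = Q(v)\,s^2 + O(s^3)$ forces $Q(v)=0$; hence each branch through $0$ is tangent to one of the two null lines of $Q$, and two branches with non-parallel tangents necessarily cross nontangentially. A short direct computation then confirms that the vectors displayed in \eqref{e:paGa-} and \eqref{e:paGa+} are representatives of these two null lines: each satisfies $Q(\cdot)=0$ (complete the square, using that $\bigl(\sign(r_{1,1})\sqrt{D}\bigr)^2 = D$), each is nonzero --- the factor $\sign(r_{1,1})$ being inserted precisely so that, in the degenerate case $r_{2,0}=0$ or $r_{0,2}=0$ (where $D>0$ forces $r_{1,1}\neq0$), the entry $r_{1,1}+\sign(r_{1,1})\sqrt{D}$ equals $2r_{1,1}\neq0$ --- and the $2\times2$ determinant formed from the two vectors equals $2\sqrt{D}\,\bigl(|r_{1,1}|+\sqrt{D}\bigr) > 0$, so they are linearly independent.

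For statement (i) I would use the Newton-polygon substitution underlying the case analysis of \cite[Section I.2.7]{VT74}. After a preliminary rotation of coordinates (only finitely many rotations are excluded) I may assume neither null line of $Q$ lies along a coordinate axis, i.e.\ $r_{2,0}\neq0$ and $r_{0,2}\neq0$. Since every monomial of $r$ has total degree $\geq 2$, substituting $\ep_2 = \ep_1 w$ makes $g(\ep_1,w) := \ep_1^{-2}\,r(\ep_1,\ep_1 w)$ a real-analytic function near $\{0\}\times\Rbb$ with $g(0,w) = Q(1,w) = r_{0,2}w^2 + r_{1,1}w + r_{2,0}$; this quadratic has leading coefficient $r_{0,2}\neq0$ and two distinct real roots $w_\pm$ (discriminant $D>0$), both simple, hence $\partial_w g(0,w_\pm)\neq0$. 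The analytic implicit function theorem then yields unique real-analytic $\omega_\pm$ with $\omega_\pm(0)=w_\pm$ and $g(\ep_1,\omega_\pm(\ep_1))\equiv0$, and $\Gamma^\pm := \{(\ep_1,\ep_1\omega_\pm(\ep_1)) : |\ep_1| \text{ small}\}$ are simple regular real-analytic curves through $0$, contained in $\{r=0\}$, with tangent $(1,w_\pm)$ at $0$. To see these exhaust $\{r=0\}$ near $0$: for $\ep_1=0$ one has $r(0,\ep_2) = r_{0,2}\ep_2^2 + O(\ep_2^3)\neq0$ for $0<|\ep_2|$ small; and any small zero with $\ep_1\neq0$ has \emph{bounded} slope $w=\ep_2/\ep_1$, since $|r(\ep)|\geq|Q(\ep)|-C|\ep|^3$ together with $Q(\ep)=\ep_2^2\bigl(r_{0,2}+O(\ep_1/\ep_2)\bigr)$ forces $r(\ep)\neq0$ whenever $|\ep_1/\ep_2|$ and $|\ep|$ are both small --- so $w$ lies near $\{w_+,w_-\}$, and the uniqueness clause of the implicit function theorem gives $w=\omega_\pm(\ep_1)$. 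Undoing the rotation transfers (i) back to the original coordinates (a linear map preserves regularity, analyticity, and transversality), and the tangent directions transform back to null directions of the original $Q$, consistent with (ii). Alternatively, part (i) alone follows at one stroke from the real-analytic Morse lemma at the nondegenerate saddle critical point $\ep^0$ of $r$, which gives an analytic local diffeomorphism carrying $r$ to $y_1^2-y_2^2$, whose zero set is a transverse pair of lines; but this route does not deliver the explicit tangent vectors, so the Newton-polygon computation is still needed for (ii).

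The step I expect to be the main obstacle is the \emph{completeness} half of (i) --- certifying that $\{r=0\}$ contains nothing beyond $\Gamma^+\cup\Gamma^-$ near $\ep^0$ --- together with the bookkeeping required when a null direction of $Q$ is aligned with a coordinate axis ($r_{2,0}=0$ or $r_{0,2}=0$); the latter is exactly why the vectors in \eqref{e:paGa-}--\eqref{e:paGa+} are written asymmetrically with the $\sign(r_{1,1})$ factor rather than simply as $(1,w_\pm)$, and it is absorbed either by the preliminary rotation above or, in the final formulas, by that sign choice. Apart from these points the argument is a routine combination of the analytic implicit function theorem with elementary discriminant algebra, and since the statement is assembled from several formulas of \cite[Section I.2.7]{VT74}, one may instead simply invoke \cite[Section I.2.7]{VT74}; the above is the self-contained reconstruction I would supply if a proof were wanted.
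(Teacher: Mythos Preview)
Your proof is correct and takes a genuinely different route from the paper's Appendix~B. The paper works via the Weierstrass preparation theorem and the Puiseux-series theorem \cite[Theorem~12.2]{RSIV}: it first obtains the two branches of $\{r_\crm=0\}$ as Puiseux series $\ep_2^\pm(\ep_1)=\sum_j e_j^\pm \ep_1^{j/2}$ in the complexification, then reduces by a linear change of coordinates to the normal form $r_{2,0}=-r_{0,2}=1$, $r_{1,1}=0$, and argues from sign changes of $r$ along the axes that the Puiseux branches must be real for real $\ep_1$, forcing all odd-index Puiseux coefficients to vanish --- so the branches are in fact Taylor series. Your blow-up substitution $\ep_2=\ep_1 w$ followed by the analytic implicit function theorem bypasses the Puiseux machinery entirely and delivers the branches directly as real-analytic graphs; this is more elementary and makes the completeness step (that nothing else lies in $\{r=0\}$) explicit via the boundedness-of-slope estimate, whereas in the paper completeness is packaged inside the multiplicity count from Weierstrass preparation. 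Your treatment of~(ii) --- reading the tangent directions off the null cone of the quadratic part $Q$ and verifying algebraically that the displayed vectors are independent null vectors of $Q$ --- is also cleaner than the paper's Step~3, which obtains the tangents by plugging the Taylor series back into $r=0$ and solving the resulting quadratic for the leading coefficient. One minor slip: the $2\times 2$ determinant you form from the two displayed vectors equals $-2\sqrt{D}\,(|r_{1,1}|+\sqrt{D})$, not $+2\sqrt{D}\,(|r_{1,1}|+\sqrt{D})$; the sign is irrelevant for linear independence. Your alternative via the real-analytic Morse lemma is a legitimate shortcut for~(i) that the paper does not mention.
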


For the reader's convenience, an elementary proof of Lemma \ref{l:n=2Det<0} without the use of Newton diagrams is given in Appendix \ref{a:B}.

\begin{theorem} \label{t:n=2Det<0}
In the settings of Subsection \ref{s:Hypersurf}, assume that $n=2$ and  that the operator $A(x^0;\ep^0)$ is  $\Hsec$-sectorial at an $\rla$-singular $\si(A)$-critical point  $(x^0;\ep^0)$ with $\det \Hf_{\rla} (\ep^0) <0 $. 
%Since $\rla (\cdot)$ satisfies the conditions of Lemma \ref{l:n=2Det<0}, 
Then there exists a neighborhood $\Ncal^{x^0} \times \Ncal^{\ep^0}$ of $(x^0;\ep^0)$ such that:
\begin{itemize}
	\item[(i)]  $A^\loc(\ep) \in \Hsec$ \ for all $\ep \in \Ncal^{\ep^0}$ \ and  \ 
	%\begin{equation*}
 $	\{\ep \in \Ncal^{\ep^0}\ : \  r(\ep)=0 \}
	= 
	\Gamma^+ \cup \Gamma^- , \
$	
	%\end{equation*}
	where the curves $\Gamma^\pm$ and their parametrization $\ga^\pm$ are as in Lemma \ref{l:n=2Det<0}. In particular, \eqref{e:paGa-}--\eqref{e:paGa+} are satisfied.
	
	% with $\Hf_r$ equal to $\Hf_\rla$ of \eqref{e:Hrla}.
	
	\item[(ii)] The set $\{(X^\loc(\ep);\ep)  :  \ep \in (\Gamma^+ \cup \Gamma^-) \setminus \{\ep^0\} \}$ is a $1$-dimensional Hopf-bifurcation manifold that contains all Hopf-bifurcation points in $\Ncal^{x^0} \times \Ncal^{\ep^0}$ and consists of four simple disjoint Hopf-bifurcation curves.
	\item[(iii)] The corresponding set $(\Gamma^+ \cup \Gamma^-) \setminus \{\ep^0\} $ of Hopf-bifurcation 2-parameters is a union of four simple disjoint regular real analytic curves.
\end{itemize}
\end{theorem}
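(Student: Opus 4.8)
The plan is to assemble the statement from Proposition~\ref{p:SpCrManif1}, Lemma~\ref{l:n=2Det<0} and Theorem~\ref{t:n-parHb}; no genuinely new analysis is required, only a careful nesting of neighborhoods and one local topological count. First I would invoke Proposition~\ref{p:SpCrManif1}: since $(x^0;\ep^0)$ is $\si(A)$-critical with $A(x^0;\ep^0)\in\Hsec$, there are neighborhoods $\Ncal_2^{\ep^0}$, $\Ncal_3^{\ep^0}\subset\Ncal_2^{\ep^0}\cap\Ncal_1^{\ep^0}$ such that $A^\loc(\ep)$ is $\Hsec$-sectorial for \emph{every} $\ep\in\Ncal_2^{\ep^0}$ and such that an equilibrium point $(X^\loc(\ep);\ep)$ is $\si(A)$-critical precisely when $\rla(\ep)=0$ for $\ep\in\Ncal_3^{\ep^0}$; here $X^\loc$ is the local analytic parametrization of the equilibrium set furnished by Subsection~\ref{s:Hypersurf}. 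Shrinking inside $\Ncal_3^{\ep^0}$ (where $\rla$ is real analytic with convergent Taylor series at $\ep^0$) and applying Lemma~\ref{l:n=2Det<0} with $r=\rla$, using the hypothesis $\det\Hf_{\rla}(\ep^0)<0$, yields a neighborhood $\Ncal^{\ep^0}$ and $\Ncal^{x^0}:=X^\loc(\Ncal^{\ep^0})$ on which $\{\ep\in\Ncal^{\ep^0}:\rla(\ep)=0\}=\Gamma^+\cup\Gamma^-$, two simple regular real analytic curves meeting nontangentially at $\ep^0$ as their only common point, with tangent vectors \eqref{e:paGa-}--\eqref{e:paGa+}; together with $A^\loc(\ep)\in\Hsec$ on all of $\Ncal^{\ep^0}$ this is exactly statement~(i).

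Next I would identify the Hopf-bifurcation points. The analytic map $\nabla\rla\colon\Rbb^2\to\Rbb^2$ vanishes at $\ep^0$ with Jacobian $\Hf_{\rla}(\ep^0)$, which is invertible since $\det\Hf_{\rla}(\ep^0)<0$; by the inverse function theorem $\nabla\rla$ is injective near $\ep^0$, so after a further shrinking $\nabla\rla(\ep)\neq 0$ for all $\ep\in\Ncal^{\ep^0}\setminus\{\ep^0\}$. Combining this with Proposition~\ref{p:SpCrManif1}(ii) and the fact that every equilibrium point in $\Ncal^{x^0}\times\Ncal^{\ep^0}$ has the form $(X^\loc(\ep);\ep)$ with $A^\loc(\ep)\in\Hsec$, I get
\[
\bigl\{(X^\loc(\ep);\ep):\ep\in(\Gamma^+\cup\Gamma^-)\setminus\{\ep^0\}\bigr\}=\wh\Mcal_\Hsec^\rregH\cap(\Ncal^{x^0}\times\Ncal^{\ep^0}).
\]
By Theorem~\ref{t:n-parHb}(i) the right-hand side, being an open piece of the Hopf-bifurcation manifold $\wh\Mcal_\Hsec^\rregH$, is itself a Hopf-bifurcation manifold; by Theorem~\ref{t:n-parHb}(iii), together with the fact that the derivative at \emph{every} equilibrium point of $\Ncal^{x^0}\times\Ncal^{\ep^0}$ is $\Hsec$-sectorial, it contains all Hopf-bifurcation points of $\Ncal^{x^0}\times\Ncal^{\ep^0}$; and by Theorem~\ref{t:n-parHb}(ii) the deleted point $(x^0;\ep^0)$, being $\rla$-singular, is not one. (Here Remark~\ref{r:RegNontan} rules out any Hopf-bifurcation points off the zero-locus of $\rla$.)

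Finally I would run the topological count. Deleting the single crossing point from a nontangential intersection of two simple regular real analytic curves leaves four pairwise disjoint arcs, each a simple regular real analytic curve, which is statement~(iii). Pushing these forward by the real analytic embedding $\ep\mapsto(X^\loc(\ep);\ep)$ (injective by its second coordinate) produces four pairwise disjoint connected $1$-dimensional analytic submanifolds of $\Xcal\times\Rbb^2$; each of them is a connected component of $\wh\Mcal_\Hsec^\rregH\cap(\Ncal^{x^0}\times\Ncal^{\ep^0})$, hence open and closed in it, hence inherits the Hopf-bifurcation-manifold structure and is, by Definition~\ref{d:HbManifold}, a Hopf-bifurcation curve. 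Their union is the asserted $1$-dimensional Hopf-bifurcation manifold consisting of four simple disjoint Hopf-bifurcation curves, proving~(ii).

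The main obstacle is not a single estimate but the bookkeeping: one must choose the successive neighborhoods ($\Ncal_2^{\ep^0}$ and $\Ncal_3^{\ep^0}$ from Proposition~\ref{p:SpCrManif1}, the one from Lemma~\ref{l:n=2Det<0} applied to $\rla$, and the inverse-function-theorem neighborhood for $\nabla\rla$) nested so that on the final $\Ncal^{\ep^0}$ all the needed properties hold at once, and then argue that no Hopf-bifurcation point is hiding off $\rla^{-1}(\{0\})$ or at $\ep^0$ --- which is precisely what Remark~\ref{r:RegNontan}, Proposition~\ref{p:SpCrManif1}(ii) and Theorem~\ref{t:n-parHb}(ii)--(iii) supply. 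The only mildly delicate point is checking that each of the four lifted arcs is clopen in the local Hopf-bifurcation manifold, which follows because the arcs are pairwise disjoint and $\ep^0$ has been removed.
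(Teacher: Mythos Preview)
Your proof is correct and follows the same overall architecture as the paper's: first combine Proposition~\ref{p:SpCrManif1} with Lemma~\ref{l:n=2Det<0} for statement~(i), then invoke Theorem~\ref{t:n-parHb} to identify the Hopf-bifurcation points as the $\rla$-regular points on $\Gamma^+\cup\Gamma^-$, and finally check that $\ep^0$ is the only $\rla$-singular point there. The one genuine difference is in this last step. The paper computes explicitly the leading-order asymptotics of $\nabla\rla(\gamma^\pm(s))$ as $s\to 0$ by multiplying the Hessian $\Hf_{\rla}(\ep^0)$ against the tangent vectors \eqref{e:paGa-}--\eqref{e:paGa+}, and then reads off that the leading coefficient is nonzero using $\det\Hf_{\rla}(\ep^0)<0$. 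Your inverse-function-theorem argument is cleaner: since $\nabla\rla(\ep^0)=0$ and its Jacobian $\Hf_{\rla}(\ep^0)$ is invertible, $\nabla\rla$ is a local diffeomorphism, so $\ep^0$ is its only zero nearby. Your argument is shorter and gives a stronger conclusion ($\nabla\rla\neq 0$ on the whole punctured neighborhood, not just along the curves), while the paper's computation yields the explicit first-order behavior of $\nabla\rla$ along $\Gamma^\pm$, which is not needed for the theorem but is informative.

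One small cosmetic point: defining $\Ncal^{x^0}:=X^\loc(\Ncal^{\ep^0})$ is not quite what you want, since this image need not be open in $\Xcal$. You should take $\Ncal^{x^0}$ to be an open $\Xcal$-neighborhood of $x^0$ (small enough that the local parametrization $X^\loc$ captures all equilibria in $\Ncal^{x^0}\times\Ncal^{\ep^0}$, as furnished by Subsection~\ref{s:Hypersurf}) containing $X^\loc(\Ncal^{\ep^0})$. This does not affect the logic of your argument.
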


\begin{proof} It follows from Proposition \ref{p:SpCrManif1} and Lemma \ref{l:n=2Det<0} that 
$\Ncal^{\ep^0}$ can be chosen such that statement (i) holds. 
By Theorem \ref{t:n-parHb}, all local Hopf-bifurcation 2-parameters $\ep=(\ep_1,\ep_2)$ near $\ep^0$ belong to $\Gamma^+ \cup \Gamma^-$ and are characterized in this set by the property that the gradient $\vb(\ep) := \nabla \rla(\ep)$ is nonzero. In particular, $\ep^0$ is not a Hopf-bifurcation 2-parameter.

Since 
$
\vb (\ep) = \begin{pmatrix} 
2 \rla_{2,0}   & \rla_{1,1}  \\ 
\rla_{1,1}  &  2 \rla_{0,2}  
\end{pmatrix}\begin{pmatrix} \ep_1 - \ep_1^0 \\ \ep_2 - \ep_2^0  \end{pmatrix}  + o (|\ep-\ep^0|)$ as $\ep \to \ep^0$,
we see from \eqref{e:paGa-}--\eqref{e:paGa+} that on the curves $\Ga^\pm$ the following asymptotic formulae hold:
\begin{gather*}
\vb (\gamma^- (s)) =  s \begin{pmatrix} -\det \Hf_\rla  + |\rla_{1,1}| \sqrt{-\det \Hf_\rla} \\ 2 \rla_{0,2}  \sign (\rla_{1,1}) \sqrt{-\det \Hf_\rla} \end{pmatrix}  + o(s) 
\ \text{ as $s \to 0$,}
\\
\vb (\gamma^+ (s))  = 
s \begin{pmatrix} 2 \rla_{2,0} \sign (\rla_{1,1}) \sqrt{-\det \Hf_\rla} \\ 
-\det \Hf_\rla + |\rla_{1,1}| \sqrt{-\det \Hf_\rla} 
\end{pmatrix} + o(s) \ \text{ as $s \to 0$}.
\end{gather*}
It follows from $\det \Hf_\rla  < 0$ that $\vb (\gamma^\pm (s)) \neq 0$ for $s \in (-\de,\de) \setminus \{0\}$ with a certain $\de>0$. This means that, after a possible shrinking of $\Ncal^{\ep^0}$, 
\[
	(\Gamma^+ \cup \Gamma^-) \setminus \{\ep^0\} 
	= 
	\{\ep \in \Ncal^{\ep^0} \ : \ \rla (\ep)=0, \ \nabla \rla(\ep) \neq 0, \  A^\loc(\ep) \in \Hsec\}
\]
and the set described by this equality is the set of all local Hopf-bifurcation parameters. 
This proves statement (iii) and, in turn implies statement (ii). 
\end{proof}

\begin{remark} \label{e:BifCurves}
For the  the  rimming-flow equation \eqref{eq:PDE2}, the physically relevant parameters $\ep_1$ and $\ep_2$ are positive. So for this equation from the four parametric bifurcation curves described in statement (iii) of Theorem \ref{t:n=2Det<0}, only the curve lying in the 1st quadrant is physically relevant, see Theorem \ref{t:HB}.
\end{remark}

%%%%%%%%%%%
%\subsection{?? Subsection or Remark on nonlinear stability/instability of equilibria on an absract level??}

%\begin{equation} \label{eq:ODE_steady}
%	\left(H - \tfrac{\ep_1\cos \theta}{3}  H^3\right) 	+ 	b H^3 \left(H' + H'''\right) 	+ 	\ep_2 \sin(\theta) H^3 %H' 	=  	1,  	\quad \theta \in S^1.
%\end{equation}
%We prove that, for fixed $b>0$ and $\ep > 0$ small enough, the ordinary differential equation \eqref{eq:ODE_steady} admits a unique solution $H_\ep = H (\cdot;\ep)$ which is close to $H_0 \equiv \one$ in $\Hcal^4(S^1)$. 

% and write $H(\theta) = H(\theta;\ep)$ with $\ep = (\ep_1,\ep_2) \in \C^2$ to indicate the dependence of steady states on the parameter $\ep$. Recall that, for fixed surface tension parameter $b > 0$ and $\ep=0$, the constant function $H(\theta;0) \equiv \one$ is an explicit stationary solution to \eqref{eq:PDE_steady}, i.e. it solves the ordinary differential equation

%\section{A curve of equilibria of the rimming-flow equation \eqref{eq:PDE}}
%%%%%%%%%%%

\section{Proof of Theorem \ref{thm:solution_ODE}}% and 2-parameter families of stationary flows} 
\label{s:steady}

We prove in this section the statements of Theorem \ref{thm:solution_ODE} in the wider  settings of complex Sobolev spaces $\Hcal_\crm^s (S^1) $ with an arbitrary regularity $s \ge 3$. For $s=4$, this complexified parametrization of steady states with $s=4$ will be needed in Section \ref{s:rfHcurves} for the calculation of perturbations of eigenvalues. 

Let $s \geq 3$. Recall that $b>0$ is a constant.
We introduce in $\Hcal^{s-3}_\crm (S^1)$ the linear operator $\Bo$
%\colon \dom \Bo \subset \Hcal^{s-3}_\crm (S^1) \rightarrow \Hcal^{s-3}_\crm (S^1)$ 
with the domain $\dom \Bo = \Hcal_\crm^s$ by the formula 
\begin{equation*} \label{eq:def_B}
	\Bo f = \Bo[f] := b(\partial_\theta f + \partial_\theta^3 f) .
\end{equation*}
Then the equation \eqref{eq:ODE} for the steady states $H =H(\cdot;\ep) = H (\cdot; \ep_1,\ep_2)$ can be written in the form 
\begin{equation} \label{eq:g}
	H - \tfrac{\ep_1 \cos \theta }{3}  H^3   + H^3 \Bo [H] + \ep_2 \sin(\theta) H^3 \pa_\te H   = \one .
\end{equation}
Applying the analytic implicit function theorem (see, e.g., \cite[Appendix B]{PT86}) in a small enough neighborhood of $H (\cdot; 0,0) = H_0 \equiv \one$,  we study $\Cbb$-valued  solutions $H(\cdot;\ep) \in \Hcal^s_\crm (S^1)$ to the steady state  equation (\ref{eq:g}) with small enough 2-parameters $\ep = (\ep_1,\ep_2) \in \Cbb^2$ (i.e., with small enough $\ep_j \in \Cbb$, $j=1,2$).  
As a by-product, we prove Theorem \ref{thm:solution_ODE}, which is concerned with the case $s=4$ for small real values of $\ep_1$ and $\ep_2$.

Since $b>0$ is a constant, the operator $\Bo$ is diagonal w.r.t. the orthogonal basis $\{e^{\ii n \te}\}_{n\in \Zbb}$ in $\Hcal^{s-3}_\crm (S^1)$ and has a purely imaginary spectrum $\si (\Bo) \subset \ii \Rbb$. Consequently, the operator $B+I$, where $I=I_{\Hcal^{s-3}_\crm (S^1)}$ is the identity operator in $\Hcal^{s-3}_\crm (S^1)$, is boundedly invertible in $\Hcal^{s-3}_\crm(S^1)$ and can be perceived as a linear homeomorphism from $ \Hcal^s_\crm(S^1)$ to $\Hcal^{s-3}_\crm(S^1)$; we denote this homeomorphism 
\begin{equation} \label{eq:def_G}
	 \quad G\colon \Hcal^s_\crm(S^1) \rightarrow \Hcal^{s-3}_\crm(S^1),
	\quad
	 G[f]  := (B+I)f.
\end{equation}

%We are now in a position to formulate an extended version of Theorem \ref{thm:solution_ODE}.
Recall that, for $r>0$, $\Dbb_r (0)=\Bbb_{r} (0;\Cbb) $ is an open complex disc with the center at $0$, and 
$\Dbb_r^2 (0) = \Dbb_r (0) \times \Dbb_r (0)$ is a polydisc.

\begin{proposition}\label{p:CsolutionODE}
For every natural number $ s \geq 3$, there exist positive  constants $\de_0$ and $\de_1$ (possibly, depending on $s$) with the following properties:
\begin{itemize}
	\item[(i)] For every $\ep \in \Dbb_{\de_0}^2 (0)$  there exists a unique solution $H (\ep) = H (\theta; \ep)$ to (\ref{eq:g}) in the 
		neighborhood $\Bbb_{\de_1}  (\one; \Hcal^s_\crm (S^1))$ of the constant function  $\one$.
	\item[(ii)] The $\Hcal^s_\crm (S^1) $-valued function $\ep \mapsto H (\ep)$ is complex analytic in $ \Dbb_{\de_0}^2 (0)$. The coefficients $H_{j,k} $ of its Taylor series $H(\ep) = \sum\limits_{j,k=0}^{\infty} H_{j,k} \ep_1^j \ep_2^k$ satisfy 
	\begin{gather}
	\text{$ H_{0,0} = \one$, \quad $H_{0,k} = 0$ \ for all $k \in \Nbb$, \quad $H_{1,0} = \tfrac{1}{3}  \cos \te $, 
	} 
	\label{e:Hjk0}
	\\ 
		H_{2,0} = \tfrac{1}{6} G^{-1}  [1+\cos (2 \te)]  , 
	\quad H_{1,1} = \tfrac{1}{6}  G^{-1}[1-\cos (2 \te)] ,   
	\label{e:Hjk1}
	\\
	\ \text{ and } \quad		(H(\ep)|\one) 
		= 
		1+ \tfrac{1}{6} \ep_1^2 + \tfrac{1}{6} \ep_1 \ep_2 + o (|\ep|^2) 
		\quad 
		\text{as} 
		\quad
		\ep \to 0.
	\label{e:<H1>}
	\end{gather}
\end{itemize}
\end{proposition}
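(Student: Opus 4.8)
The plan is to realize the solutions of \eqref{eq:g} as the zeros of a single analytic map and to invoke the analytic implicit function theorem around $(\one,0)$. Define
\[
	\Phi(H,\ep)
	:=
	H - \tfrac{\ep_1\cos\theta}{3}H^3 + H^3 B[H] + \ep_2\sin(\theta)H^3\pa_\theta H - \one,
\]
so that $H\in\Hcal^s_\crm(S^1)$ solves \eqref{eq:g} precisely when $\Phi(H,\ep)=0$. The first step is to check that $\Phi$ is a well-defined, complex-analytic map $\Hcal^s_\crm(S^1)\times\Cbb^2\to\Hcal^{s-3}_\crm(S^1)$: for $s\ge 3$ the space $\Hcal^s_\crm(S^1)$ is a Banach algebra and pointwise multiplication is bounded bilinear $\Hcal^s_\crm(S^1)\times\Hcal^{s-3}_\crm(S^1)\to\Hcal^{s-3}_\crm(S^1)$, while $B\colon\Hcal^s_\crm(S^1)\to\Hcal^{s-3}_\crm(S^1)$ and $\pa_\theta\colon\Hcal^s_\crm(S^1)\to\Hcal^{s-1}_\crm(S^1)\hookrightarrow\Hcal^{s-3}_\crm(S^1)$ are bounded linear. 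Thus $\Phi$ is a finite sum of bounded multilinear maps in $(H,\ep)$, i.e., a polynomial map, in particular analytic; and $B[\one]=0$ gives $\Phi(\one,0)=0$.

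The decisive ingredient is the Fréchet derivative $D_H\Phi(\one,0)$. Substituting $H=\one+v$ at $\ep=0$ and retaining only the part linear in $v$: the gravity and pressure terms vanish (they carry a factor of $\ep$), the term $H^3B[H]$ contributes $B[v]$ because $B[\one]=0$, and the term $H$ contributes $v$; hence $D_H\Phi(\one,0)=I+B=G$, which by \eqref{eq:def_G} is a linear homeomorphism of $\Hcal^s_\crm(S^1)$ onto $\Hcal^{s-3}_\crm(S^1)$. The analytic implicit function theorem (e.g., \cite[Appendix B]{PT86}) then yields constants $\de_0,\de_1>0$ and a complex-analytic map $\ep\mapsto H(\ep)$ on $\Dbb^2_{\de_0}(0)$ such that $H(\ep)$ is the unique zero of $\Phi(\cdot,\ep)$ in $\Bbb_{\de_1}(\one;\Hcal^s_\crm(S^1))$. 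This establishes statement (i) and the analyticity assertion in (ii).

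It remains to identify the leading Taylor coefficients. Setting $\ep=0$ forces $H(0)=\one$ by uniqueness, so $H_{0,0}=\one$; likewise, when $\ep_1=0$ the constant function $\one$ still solves \eqref{eq:g} for every $\ep_2$ (both the gravity term and $\pa_\theta H$ vanish on constants), so uniqueness gives $H(0,\ep_2)\equiv\one$ and $H_{0,k}=0$ for all $k\ge1$. For the rest I would insert the Taylor series $H(\ep)=\sum_{j,k}H_{j,k}\ep_1^j\ep_2^k$ into \eqref{eq:g}, expand, and equate coefficients of $\ep_1^j\ep_2^k$. The $O(\ep_1)$ identity is $G[H_{1,0}]=\tfrac13\cos\theta$; since $n-n^3=0$ for $n=\pm1$, the operator $G$ acts as the identity on the first harmonics, so $H_{1,0}=\tfrac13\cos\theta$ and in particular $B[H_{1,0}]=0$. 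Using this, the $O(\ep_1^2)$ and $O(\ep_1\ep_2)$ identities simplify (every term containing $B[H_{1,0}]$ drops out) to $G[H_{2,0}]=\cos\theta\cdot H_{1,0}=\tfrac16(1+\cos2\theta)$ and $G[H_{1,1}]=-\sin\theta\,\pa_\theta H_{1,0}=\tfrac16(1-\cos2\theta)$, which is \eqref{e:Hjk1}. Finally \eqref{e:<H1>} is obtained by pairing the expansion with $\one$: the coefficients $H_{1,0}$ and $H_{0,k}$ are orthogonal to $\one$, while $G^{-1}\one=\one$ and $G^{-1}[\cos2\theta]$ has vanishing mean, so $(H_{2,0}|\one)=(H_{1,1}|\one)=\tfrac16$ and the stated asymptotics follows.

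The work is bookkeeping rather than conceptual: the only point requiring care is the chain of Sobolev-scale multiplication estimates ensuring that $\Phi$ is a well-defined analytic map $\Hcal^s_\crm(S^1)\times\Cbb^2\to\Hcal^{s-3}_\crm(S^1)$ for every $s\ge 3$ (the hypothesis $s\ge3$ is exactly what makes $\Hcal^s_\crm(S^1)$ a multiplier algebra on $\Hcal^{s-3}_\crm(S^1)$ and keeps the term $H^3B[H]$ in $\Hcal^{s-3}_\crm(S^1)$), together with organizing the coefficient recursion so that each $H_{j,k}$ appears as $G^{-1}$ applied to an explicit trigonometric polynomial built from lower-order coefficients. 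Everything beyond the implicit function theorem is then a finite explicit computation.
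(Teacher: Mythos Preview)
Your proof is correct and follows essentially the same route as the paper: apply the analytic implicit function theorem to the map $\Phi$ at $(\one,0)$, observing that $D_H\Phi(\one,0)=G$ is an isomorphism, and then read off the Taylor coefficients by substituting the series into \eqref{eq:g}. Your argument for $H_{0,k}=0$ via uniqueness (noting that $\one$ solves \eqref{eq:g} for all $\ep_2$ when $\ep_1=0$) is a minor shortcut over the paper's inductive computation, and your discussion of the Sobolev multiplication estimates needed for analyticity of $\Phi$ is slightly more explicit than the paper's, but the substance is the same.
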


\begin{proof}
\emph{Step 1.} We prove statement (i) and the analyticity  part of statement (ii).
Consider the map 
$
f_2 \colon \Hcal^s_\crm (S^1) \times \Cbb^2 
	\rightarrow \Hcal^{s-3}_\crm (S^1) 
	$
defined by 	\[
	f_2 (H;\ep)
	:=
	\left(H - \tfrac{\ep_1 \cos \theta }{3}  H^3\right)
	+
	H^3 \Bo [H]
	+
	\ep_2 \sin(\theta) H^3 H^\prime
	-
	\one .
\]
We observe that $f_2$ is locally bounded, $f_2 $ is analytic w.r.t. $H$ and w.r.t. $\ep$, and that $f_2 (\one;0) = 0$. Since $\Bo [\one] = 0$, the  Fréchet derivative of $f_2$ w.r.t. $H$ at  $(H_0;\ep)=(\one;0)$ is given by the linear homeomorphism $G$, see \eqref{eq:def_G}.
Thus, the analytic implicit function theorem (see \cite[Appendix B]{PT86}) 
proves statement (i) of the proposition and the analyticity statement in (ii). 
%So  
%the analytic implicit function theorem (see e.g. \cite[Appendix B]{PT86}) is applicable.
%This implies that there exist an $\Hcal^s_\crm (S^1)$-ball $\Bbb_{\de_0}  (\one)$, a polydisc $\Dbb_{\de_0}^2$, and a complex analytic map  $J\colon \Dbb_{\de_0}^2 \to \Bbb_{\de_0}  (\one)$ such that for all $(H;\ep) \in \Bbb_{\de_0}  (\one)\times \Dbb_{\de_0}^2$ one has $ E(H;\ep)=0$ if and only if $H = J(\ep)$.
%This proves (i) and the 1st statement of (ii).
	
	%Note that in $\Ncal_{H=1}\times \Ncal_{\ep=0}$ the map $F= F(H;\ep)$ is complex differentiable, and so analytic. 
	
	%Moreover, $\Ncal_{\ep=0}$ can be chosen such that f is analytic in $\Ncal_{\ep=0}$ and is real analytic on  $\Ncal_{\ep=0} \cap \Rbb$.
	
\emph{Step 2.} We prove now formulae  \eqref{e:Hjk0}--\eqref{e:<H1>}.
The equation \eqref{eq:g} can be rearranged as 
\begin{equation} \label{e:H-1}
	H - \one =  H^3 \left(\ep_1 \tfrac{1}{3} \cos(\theta) - \ep_2 \sin(\theta)  H' - \Bo [H] \right) ,
\end{equation}
where as before $H' = \pa_\theta H$.
Inserting the series $H (\ep) = \sum_{j,k=0}^{\infty} H_{j,k} \ep_1^j  \ep_2^k$ into \eqref{e:H-1} and using the facts that $H_{0,0} = \one$ and $0 = H'_{0,0} = \Bo[H_{0,0}]$, we get
\begin{multline*}
 H_{1,0} \ep_1 + H_{0,1} \ep_2 + H_{2,0} \ep_1^2  + H_{1,1} \ep_1 \ep_2  + H_{0,2} \ep_2^2 + o(|\ep|^2)  
	\\ 
	 = 
	\left(  \one +  H_{1,0} \ep_1 + H_{0,1} \ep_2 + o(|\ep|) \right)^3 R (\ep),
	% 	\Bigl( 	\tfrac{1}{3} \cos(\theta) \ep_1 	-  \sin(\theta)  \ep_2 \bigl[ H'_{1,0} \ep_1 + H'_{0,1} \ep_2 + o(|\ep|) \bigr]  %\right.
	%\\  	& \qquad \qquad \qquad % \left. 
	%-  \left[\Bo [H_{1,0}] \ep_1 + \Bo [H_{0,1}] \ep_2 + \Bo [H_{2,0}] \ep_1^2  + \Bo [H_{1,1}] \ep_1 \ep_2  
	%+ \Bo [H_{0,2}] \ep_2^2 + o(|\ep|^2)  \right]   \Bigr)  .
\end{multline*}
where 
\begin{multline*}
	R (\ep) = 
	\tfrac{1}{3} \cos(\theta) \ep_1 
	-  \sin(\theta)  \ep_2 \bigl[ H'_{1,0} \ep_1 + H'_{0,1} \ep_2 + o(|\ep|) \bigr]  -  \Bo [H_{1,0}] \ep_1 
	\\ 
	 	+ \Bo [H_{0,1}] \ep_2 + \Bo [H_{2,0}] \ep_1^2  + \Bo [H_{1,1}] \ep_1 \ep_2  + \Bo [H_{0,2}] \ep_2^2 + o(|\ep|^2)   .
\end{multline*}
Using $\Bo [\cos \theta] = 0$, the definition  \eqref{eq:def_G} of $G$, and the fact that $\Bo +I$ is invertible, one obtains
\begin{align*}
	H_{1,0} 
	&=  
	\tfrac{1}{3}\cos(\theta) - \Bo [H_{1,0}] =  (\Bo+I)^{-1} \left[\tfrac{1}{3} \cos(\theta) \right] 
	= 
	\tfrac{1}{3} \cos(\te),  
	\\
	H_{0,1} 
	&= 
	- \Bo [H_{0,1}] = (\Bo+I)^{-1} [0] = 0, 
	\\
	H_{2,0} 
	&=  
	3 H_{1,0} \left(\tfrac{1}{3} \cos(\theta) - \Bo [H_{1,0}]\right) - \Bo [H_{2,0}]  = 3 H_{1,0}^2 - \Bo [H_{2,0}] = G^{-1}  [3 H_{1,0}^2]  , 
	\\
	H_{1,1} 
	&=  
	- \sin(\theta) H_{1,0}' - \Bo[H_{1,1}] 
	= G^{-1}[ - \sin (\theta) H_{1,0}'] 
	= G^{-1}\left[\tfrac{1}{3} \sin^2 (\theta)\right], 
	\\
	H_{0,2} 
	& =  
	- \Bo [H_{0,2}] 
	= 
	0. 
\end{align*}
%Hence, we have proved \eqref{e:Hjk1}. 
	%\begin{align*}
	% \text{and so } &
	%H_{2,0}   =  G^{-1}  [3 H_{1,0}^2] = G^{-1}  (3^{-1} \cos^2 (\theta))  , \quad 
	% H_{1,1}  =  G^{-1}  [ - \sin (\theta) H_{1,0}'] =  G^{-1}  ( 3^{-1} \sin^2 (\theta)) .
	%\end{align*}
Inductively we get  $H_{0,k} = - \Bo [H_{0,k}] = 0$ for all $k \ge 1$. This completes the proof of \eqref{e:Hjk0}--\eqref{e:Hjk1} and, in turn, implies \eqref{e:<H1>}.
\end{proof}

\begin{remark}\label{r:real_eps}
	If $\ep\in \Rbb^2 \cap \Dbb_{\de_1}^2 (0)$ in the settings of Proposition \ref{p:CsolutionODE}, 
	then $H(\cdot;\ep)$ is real-valued. This follows from the local uniqueness of $H$ and the invariance of equation \eqref{eq:g} under the complex conjugation.
	This implies Theorem \ref{thm:solution_ODE}.
\end{remark}

\section{Hopf-bifurcation curves for the rimming-flow equation\label{s:rfHcurves}}

\subsection{The reduced rimming-flow equation in the abstract settings}
\label{s:ReductionProof}

In this subsection we show that the parametrized dynamical system \eqref{e:PDEHyp2} fits 
into the settings of Section \ref{s:mpHopfB} on abstract multi-parameter Hopf-bifurcations.

In formulae \eqref{eq:f}-\eqref{eq:f1},  the rimming-flow equation \eqref{eq:PDE2} is written in the form 
$\pa_t h  = f(h;\ep)$ with  
%$ f (w;\ep) = -\pa_\th f_1 (w;\ep)$}  
% and $     f_1 (w;\ep) =   \left(w - \frac{\ep_1 \cos \theta }{3}  w^3 \right)
%        + b w^3 \left(\pa_\theta w + \pa_\theta^3 w\right) + \ep_2 \sin(\theta) w^3 \pa_\theta w,
%$
real-valued $h$. The abstract settings of Section \ref{s:mpHopfB} require  the complexification of the Fréchet derivative of $f$ in \eqref{e:A}. 

Instead of the use of the abstract complexification, one can employ its natural version just assuming that the function $h$ is 
complex-valued. Let us consider this process in detail.
We introduce the continuous mapping  $f_\crm: H_\crm^4 (S^1) \times \Cbb^2 \to  L^2_\crm(S^1) $ defined for complex-valued functions by the same differential expression as $f$, and consider the associated dynamical system in  the complex space $L^2_\crm(S^1)$. 
Taking $s=4$ and taking $\ep$ in the polydisc $\ep \in \Dbb_{\de_0}^2 (0)$ of Proposition \ref{p:CsolutionODE}, we use the complex analytic parametrization $\ep \mapsto H(\ep) = H (\cdot,\ep) $ from this proposition.
The complex hyperplane  $ \Hyp_\crm (C(\ep)) = \{ h \in L_\crm^2(S^1): \ ( h | \one) = C(\ep) \}$ containing $H(\ep)$ is invariant for $\pa_t h  = f(h;\ep)$, which can be shown in the same way as for the real case in Section \ref{s:MainRes}.
Here $C(\ep) = \int_{S^1} H(\theta,\ep) \frac{\dd \theta}{2\pi}$ can be nonreal for $\ep \not \in \RR^2$.
%is chosen in such a way that $\Hyp (C(\ep)) = \{  H(\cdot;\ep) + v(\cdot) \ : \  v (\cdot) \in \dotL^2 (S^1) \ \}$.
The equation $\pa_t h  = f_\crm (h;\ep)$ in  the invariant hyperplane  $ \Hyp_\crm (C(\ep))$ 
can be written w.r.t. 
$
u(t,\theta;\ep) := h(t,\theta;\ep)- H (\theta;\ep) \in \dotL^2_\crm(S^1)
$
 as the equation $  \pa_t u =  \wt F (u ; \ep) $ with $\wt F (w ;\ep) := f (w+H(\ep);\ep)$.

The complex Fréchet derivative $D_w \wt F(\zero;\ep) \in \Lcal \bigl(\dotH^4_\crm(S^1), \dotL^2_\crm(S^1) \bigr)$
 w.r.t. the first variable $w$ is given by the operator 
$D_w \wt F(\zero;\ep) = - \pa_\theta Q_\ep[v]$,
where the differential expression $Q_\ep$ is given by the formula \eqref{e:A1}.
%the same formula as in Section \ref{s:MainRes},
%\begin{gather}
%			Q_\ep [v]  = 	\bigl(v - \ep_1 \cos(\theta) H^2  v\bigr)   	+ 	3H^2 v \Bo [H]  +  	H^3  \Bo [v]  		+
%	\ep_2 \sin(\theta) H^2\Bigl(3 v\pa_\theta H  + 
%	H \pa_\theta v\Bigr)  
%		\label{e:Qe}
%\end{gather}
%and $\Bo [v] =  b(\pa_\te v+\pa_\te^3 v)$.

By direct computations one can see that, for $\ep \in \Dbb_{\de_0}^2 (0)$,   there exists the complex derivative $D_\ep \wt F(\zero;\ep)$ w.r.t. the 2-parameter $\ep$.
Since the map $\wt F \colon \dotH_\crm^4 (S^1) \times \Dbb_{\de_0}^2 (0) \to  \dotL^2_\crm(S^1) $ is locally bounded, \cite[Theorem A.1]{PT86} implies that 
$\wt F$ is holomorphic in $\dotH_\crm^4 (S^1) \times \Dbb_{\de_0}^2 (0)$. 

Consequently, 
the map $F \colon \dotH^4 (S^1) \times \Bbb_{\de_0}^2 (0;\RR^2) \to \dotL^2 (S^1) $ defined in \eqref{e:PDEHyp2} is a real analytic map. As a complexification $A(\zero;\ep)$ of its Fréchet derivative $D_w F(\zero;\ep) $ one can take $D_w \wt F(\zero;\ep)$.
Moreover, the dynamical system $\pa_t u  = F(u;\ep)$ satisfies the assumptions 
\eqref{h:HComp} and \eqref{h:HAn} of Section \ref{s:mpHopfB}. 
%on abstract multi-parameter Hopf-bifurcations.

\begin{remark} \label{e:Adef}
Thus, $A(\zero;\ep) [\cdot]$ is a bounded linear operator from $ \dotH_\crm^4 (S^1)$ to $\dotL^2_\crm(S^1)$ defined by 
\begin{gather} \label{e:A=paQ}
A (\zero;\ep) v = A(\zero;\ep)[v] = - \pa_\theta Q_\ep[v]
\end{gather}
with $Q_\ep$ given by \eqref{e:A1}.
However, for the calculation of perturbations of eigenvalues  we use the spectral analysis convention (see, e.g., \cite{Kato}),
which considers $A(\zero;\ep)[\cdot]$ as an unbounded operator in the space $\dotL^2_\crm(S^1)$ with the dense domain $\dom A(\zero;\ep) = \dotH^4_\crm (S^1)$.
\end{remark}

Recall that $\Bo [v] =  b(\pa_\te v+\pa_\te^3 v)$. We apply to $A(\zero;\ep)$ the perturbation results of Section \ref{s:Hypersurf} and of Appendix \ref{a:A} in a vicinity of the unperturbed  2-parameter $\ep^0 = 0$. Note that, in this case 
\begin{equation} \label{e:A000}
\text{$X^\loc (\ep) \equiv \zero$ \qquad and \qquad $A^\loc (\ep) \equiv A(\zero;\ep)$.}
\end{equation}
The unperturbed operator 
\[	A^\loc (0) [v]  = A(\zero;0) [v]
	= -\pa_\th (v+\Bo [v]) = - \pa_\theta v - b  (\pa_\theta^2 v  + \pa_\theta^4 v ) 
\]
is normal and diagonal w.r.t. the basis $\{e^{\ii n \theta}\}_{n \in \Zbb\setminus\{0\}}$ of $\dotL^2_\crm (S^1)$.  More precisely, 
\begin{equation} \label{e:A00}
	A^\loc (0)  \,e^{\ii n \te} \  = \ \om_n  \, \ee^{\ii n \te} 
	\quad 
	\text{with} 
	\
	\om_n = - b(n^4 - n^2) - \ii n,
	\ n \in \Zbb\setminus\{0\}. 
\end{equation}

\begin{proposition} \label{p:Av}
 There exist  constants $\de>0$, $\de_5>0$, and there exist  two holomorphic functions $\la^\pm \colon \Ncal_\crm^{0}  \to \Cbb$, where  $\Ncal_\crm^{0} = \Dbb^2_\de (0)$  is a polydisc centered at $\ep^0=0$, 
with the following properties: 
\begin{itemize}
	\item[(i)] In the polydisc $\Ncal_\crm^{0}$, $A^\loc (\cdot)$ is a holomorphic family of type ($\Acal$) in the sense of \cite{Kato,B85} (see also Appendix \ref{a:A}),  and $A^\loc (\ep)$ is $\Hsec$-sectorial for each $\ep \in \Ncal_\crm^{0}$.
	\item[(ii)]  For all $\ep $ in the $\Rbb^2$-neighborhood $\Ncal^{0}  := (-\de,\de)^2$ of $0$, the part \[ \{\la \in \si (A^\loc (\ep)) \ : \ - \de_5  < \Re \la \} \] of the spectrum of $A^\loc (\ep)$ 
		consists of exactly two complex conjugate simple isolated eigenvalues $\la^\pm (\ep)$; in particular, 
	$\la^\pm (0) = \pm \ii$ \ and \ $\ee^{\mp \ii \theta}$ \ are the associated eigenfunctions of $A^\loc (0)$.
	Moreover, $(\zero;0)$ is a $\si(A)$-critical equilibrium point of $\pa_t u = F(u;\ep)$. 
	% [earlier we took c = 12 b \de].
	\item[(iii)] There exists a holomorphic function $\psi \colon \Ncal_\crm^{0} \to \dotH^4 (S^1)$
	such that 
	\[
	\text{$A^\loc (\ep) \psi (\ep)  = \la^+(\ep) \psi (\ep)$ and 
	$(\psi (\ep)  | e^{-\ii \theta}) = 1$ for all $\ep \in  \Ncal_\crm^{0}$.} 
	\]
	\item[(iv)] The function $\rla(\ep) = \Re \la^+ (\ep)$ is analytic in $\Ncal^{0}$. For $\ep \in \Ncal^{0} $, the equilibrium point 
	$(\zero;\ep)$ of  $\pa_t u =  F (u;\ep)$ is  $\si(A)$-critical exactly when $\rla(\ep) = 0$.
\end{itemize}
\end{proposition}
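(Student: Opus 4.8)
The plan is to treat the linearized operator $A^\loc(\cdot)=A(\zero;\cdot)$ as a lower-order analytic perturbation of the explicit normal operator $A^\loc(0)$ of \eqref{e:A00} and to feed this into the multi-parameter perturbation machinery of Appendix \ref{a:A}; all of its hypotheses are checkable by hand on the concrete differential expression \eqref{e:A1}. First I would verify that $A^\loc(\cdot)$ is a holomorphic family of type $(\Acal)$ on a polydisc $\Ncal_\crm^0=\Dbb^2_\de(0)$: the top-order term of $A^\loc(\ep)v=-\pa_\theta Q_\ep[v]$ is $-bH(\ep)^3\pa_\theta^4 v$ with $H(\ep)^3=\one+O(|\ep|)$ bounded and bounded away from $0$ (Proposition \ref{p:CsolutionODE}(iii)), so $\dom A^\loc(\ep)=\dotH^4_\crm(S^1)$ for every $\ep$, the difference $A^\loc(\ep)-A^\loc(0)$ is $A^\loc(0)$-bounded with relative bound $o(1)$ as $\ep\to0$ (hence $A^\loc(\ep)$ is closed for small $|\ep|$), and for fixed $v\in\dotH^4_\crm(S^1)$ the map $\ep\mapsto-\pa_\theta Q_\ep[v]\in\dotL^2_\crm(S^1)$ is holomorphic because the coefficients of $Q_\ep$ depend polynomially on $\ep_1,\ep_2$ and on $H(\ep),\pa_\theta H(\ep),\dots,\pa_\theta^3 H(\ep)$, $\ep\mapsto H(\ep)\in\Hcal^4_\crm(S^1)$ is holomorphic (Proposition \ref{p:CsolutionODE}(ii)), and multiplication and differentiation up to order four are continuous in the relevant spaces.

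Next I would check that $A^\loc(0)$ is $\Hsec$-sectorial and read off the spectral data directly from $\sigma(A^\loc(0))=\{\om_n:n\in\Zbb\setminus\{0\}\}$, $\om_n=-b(n^4-n^2)-\ii n$. Since $A^\loc(0)$ is normal, $\|(A^\loc(0)-\zeta I)^{-1}\|=\dist(\zeta,\sigma(A^\loc(0)))^{-1}$, so it suffices to note that after the shift $\zeta_0=1$ the set $\{\om_n-1\}$ lies in the cone $\{w:|\arg w-\pi|\le\pi/4\}$ about the negative real half-axis — because $\Re(\om_{\pm1}-1)=-1$ with $|\Im\om_{\pm1}|=1$, while $|\Im\om_n|=|n|$ is negligible against $|\Re\om_n|\sim bn^4$ for $|n|\ge2$ — whence $A^\loc(0)\in\Hsec(\omega,1)$ for every $\omega<\pi/4$. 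The same inspection records that $0\in\rho(A^\loc(0))$ (no $\om_n$ vanishes for $n\neq0$), that $\om_{-1}=\ii$ and $\om_1=-\ii$ are simple isolated eigenvalues on $\ii\Rbb$ with eigenfunctions $\ee^{-\ii\theta}$ and $\ee^{\ii\theta}$, and that $\sigma(A^\loc(0))\setminus\{\pm\ii\}\subset\{\Re\zeta\le-12b\}$. As $F(\zero;\ep)=f(H(\ep);\ep)=0$ by the steady-state equation \eqref{eq:g}, $(\zero;0)$ is an equilibrium point, and the picture above shows it is $\si(A)$-critical with $\la_0^\pm=\pm\ii$.

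Finally I would apply the results of Appendix \ref{a:A}. Proposition \ref{p:PerSectC} with Remark \ref{r:PerSectC} gives, after shrinking $\de$, that $A^\loc(\ep)\in\Hsec$ for all $\ep\in\Ncal_\crm^0$ and that the part of $\sigma(A^\loc(\ep))$ outside a fixed small neighborhood of $\{\pm\ii\}$ stays in $\{\Re\zeta<-\de_5\}$ for a suitable $\de_5\in(0,12b)$; this is (i) together with the spectral gap in (ii). Proposition \ref{p:PerSimpleC} continues $\om_{\mp1}$ to holomorphic functions $\la^\pm\colon\Ncal_\crm^0\to\Cbb$ with $\la^\pm(0)=\pm\ii$ and a holomorphic eigenfunction; dividing that eigenfunction by the scalar $(\,\cdot\,|\,\ee^{-\ii\theta})$, holomorphic and equal to $1$ at $\ep=0$ hence nonzero near $0$, produces $\psi(\ep)$ with $A^\loc(\ep)\psi(\ep)=\la^+(\ep)\psi(\ep)$ and $(\psi(\ep)|\ee^{-\ii\theta})=1$, proving (iii). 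For real $\ep$ the coefficients of $Q_\ep$ are real (Remark \ref{r:real_eps}), so $A^\loc(\ep)$ commutes with complex conjugation and $\overline{\la^+(\ep)}$ is the eigenvalue near $-\ii$, i.e. $\la^-(\ep)$; since $\ep\mapsto\overline{\la^+(\bar\ep)}$ is holomorphic and equals $\la^-$ on the real neighborhood $\Ncal^0=(-\de,\de)^2$, the identity $\la^-(\ep)=\overline{\la^+(\bar\ep)}$ extends to $\Ncal_\crm^0$, so for real $\ep$ the two eigenvalues are complex conjugate and $\Im\la^+(\ep)>0$ (shrink $\de$, using $\Im\la^+(0)=1$), completing (ii). Then $\rla(\ep)=\Re\la^+(\ep)=\sum_\al(\Re\la^+_\al)\ep^\al$ is real-analytic on $\Ncal^0$, and since $\{\la^+(\ep),\la^-(\ep)\}$ exhausts $\sigma(A^\loc(\ep))\cap\{\Re\zeta>-\de_5\}$, the equilibrium $(\zero;\ep)$ satisfies both clauses of Definition \ref{d:SpCr} precisely when $\Re\la^\pm(\ep)=0$, i.e. when $\rla(\ep)=0$, which is (iv). The only genuine obstacle is the bookkeeping of this last step: one must make sure the concrete $A^\loc(\cdot)$ really meets the hypotheses of the abstract perturbation statements and, above all, that the spectral gap persists uniformly, so that exactly two eigenvalues — not merely each individual eigenvalue moving continuously — remain to the right of $\{\Re\zeta=-\de_5\}$.
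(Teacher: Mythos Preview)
Your proposal is correct and follows essentially the same route as the paper: establish that $A^\loc(\cdot)$ is a holomorphic family of type $(\Acal)$ via the constant domain $\dotH^4_\crm(S^1)$ and relative $A^\loc(0)$-boundedness of the perturbation, read off $\Hsec$-sectoriality and the spectral picture of $A^\loc(0)$ from \eqref{e:A00}, and then invoke Propositions \ref{p:PerSectC} and \ref{p:PerSimpleC} of Appendix \ref{a:A}. The only cosmetic differences are that you supply an explicit cone estimate for sectoriality and argue (iv) directly from Definition \ref{d:SpCr} rather than citing Proposition \ref{p:SpCrManif1}(ii); note also that your reference to ``Proposition \ref{p:CsolutionODE}(iii)'' should point to Theorem \ref{thm:solution_ODE}(iii) (or simply to the $\Hcal^4$-closeness to $\one$ in Proposition \ref{p:CsolutionODE}(i)).
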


\begin{proof}
Since $\wt F$ is holomorphic in $\dotH_\crm^4 (S^1) \times \Dbb_{\de_0}^2 (0)$ and 
$A^\loc (\ep) = D_w \wt F(\zero;\ep)$, we see that $A^\loc (\cdot) v$ is a holomorphic 
$\dotL^2_\crm(S^1)$-valued function in $\Dbb_{\de_0}^2 (0)$ for every $v \in \dotH_\crm^4 (S^1)$. In order to prove that $A^\loc (\cdot)$ is a holomorphic family of type ($\Acal$) in $\Dbb^2_\de (0)$ with a sufficiently small $\de$, it is enough to check that $A^\loc (\ep)$ is a closed operator in 
$\dotL^2_\crm(S^1)$ for $\ep \in \Dbb^2_\de (0)$. This follows from \cite[Theorem IV.1.1]{Kato}. 

Indeed, the operator $A^\loc (0)$ is normal and closed in $\dotL^2_\crm(S^1)$. Proposition \ref{p:CsolutionODE} combined with \eqref{e:A=paQ} and \eqref{e:A1} implies that the operator $A^\loc (\ep) - A^\loc (0)$ is $A^\loc (0)$-bounded for small enough $\ep$. Thus, \cite[Theorem IV.1.1]{Kato} implies that $A^\loc (\ep)$ is closed.
 
The representation (\ref{e:A00}) of  $A^\loc (0) $ implies that $A^\loc (0)$ is $\Hsec$-sectorial in $\dotL^2_\crm(S^1)$. 
Combining this with the abstract results of Section \ref{s:Hypersurf} and Appendix \ref{a:A}, one obtains the $\Hsec$-sectoriality in statement (i), possibly, taking smaller $\de>0$.

%Indeed, the $\Hsec$-sectoriality in statement (i) follows from Proposition \ref{p:SpCrManif1}  (i). 
 Statement (ii) follows from the combination of the unperturbed case (\ref{e:A00}) with Propositions  \ref{p:PerSectC} and the fact that for real 2-parameters $\ep$ the eigenvalues of $A^\loc (\ep) $ appear in complex conjugate pairs. Since $\la^\pm (0) = \pm \ii$ and the other eigenvalues $\la$ of  $A^\loc (0)$
satisfy $\Re \la \le -12 b$, we see that $(\zero,0)$ is a $\si (A)$-critical equilibrium point of $\pa_t u =  F (u;\ep)$. 

Applying Proposition \ref{p:SpCrManif1} (ii) and restricting, if necessary, in this process the $\Rbb^2$-neighborhood $\Ncal^{0}$, we obtain statement (iv). Statement (iii) follows from Proposition \ref{p:PerSimpleC}.
\end{proof}

\subsection{Poof of Theorem \ref{t:AHyp}%, perturbations of critical eigenvalues
\label{ss:PertLa}}

 For small variations of $\ep = (\ep_1,\ep_2)$ near $\ep^0 = 0$, we calculate leading coefficients of the Taylor series 
\begin{equation} \label{e:TayLaPsi}
	\la^+(\ep) 
	=
	\sum_{j,k=0}^\infty \la^+_{j,k} \ep_1^j \ep_2^k 
	\qquad
	\text{and}
	\qquad
	\psi(\ep)
	= \sum_{j,k=0}^\infty \psi_{j,k} \ep_1^j \ep_2^k  
\end{equation}
for the perturbations $\la^+(\ep) $ of the eigenvalue $\la^+_0  = \ii$ of $A^\loc (0)$ 
and for the associated with $\la^+(\ep) $ eigenfunction $\psi (\ep): \theta \mapsto \psi (\ep)(\theta)$ defined by Proposition \ref{p:Av}. 
Our aim is to show that $\rla (\ep) = \Re \la^+(\ep) $ has the properties described in Lemma \ref{l:n=2Det<0}, 
and to prove in this way Theorem \ref{t:AHyp}.

\begin{remark}
Note that the calculations of $\la^- (\ep)$ and of an eigenfunction of $A^\loc (\ep)$ associated with $\la^- (\ep) $ is not needed since $\la^- (\ep) = \overline{\la^+ (\ep)} $.
% and $A^\loc (\ep) \overline{\psi (\ep)}  = \la^- (\ep) \overline{\psi (\ep)}$   for $\ep$ in the  $\Rbb^2$-neighborhood $\Ncal^{0}$ of $0$.
\end{remark}

The operator $A^\loc (\ep) = A (\zero;\ep) $ is given by \eqref{e:A=paQ} and \eqref{e:A1}.  In the unperturbed case, the eigenvalues and the eigenfunction of $A^\loc (0)$ are given by \eqref{e:A00}. So, $\la^+ (0)  = \la^+_{0,0} = \ii$  and $\psi_{0,0} (\te) = e^{-\ii \te}$ are the corresponding eigenvalue and the eigenfunction.
Note that $\psi_{0,0}$ satisfies the normalization $(\psi_{0,0}   | e^{-\ii \theta}) = 1$, which was used in Proposition \ref{p:Av} to single out the eigenfunction.

In order to determine the subsequent coefficients of the Taylor series \eqref{e:TayLaPsi}, we test the corresponding eigenvalue problem $\lambda^+(\ep) \psi(\ep) = A^\loc (\ep)[\psi(\ep)]$ with suitable test functions $\vphi \in \dotH^1_\crm(S^1)$, i.e. we consider the equation
\begin{equation} \label{eq:psiphi}
	(\la^+(\ep) \psi(\ep)| \vphi ) 
	= 
	(A^\loc (\ep) [\psi(\ep)] | \vphi)  
	= 
	(Q_\ep[\psi(\ep)] | \pa_\theta \vphi),
	\quad 
	\vphi \in \dotH^1_\crm(S^1).
\end{equation}
%To find several leading coefficients of $\la^+ (\ep)$, we use the equalities
%\begin{gather} \label{eq:psiphi}
%( \la^+ (\ep) \psi (\ep)| \vphi ) = (  A^\loc (\ep) [\psi (\ep)] | \vphi )  = (  Q_\ep [\psi (\ep)] | \pa_\theta \vphi )
%\end{gather}
%with certain test functions $\vphi \in \dotH^1_\crm (\Tbb)$.

\begin{remark} 
In this subsection the linear operator $\Bo: v \mapsto  b(\pa_\te v+\pa_\te^3 v)$ is perceived as an unbounded operator in the space $\dotL^2_\crm(S^1)$ with the dense domain $\dom A(0;\ep) = \dotH^3_\crm (S^1)$.
\end{remark}

Using the leading terms of the expansion of $H(\ep)$ of Proposition \ref{p:CsolutionODE} and observing that 
\begin{equation*}
	0 
	= 
	\Bo[\psi_{0,0}] 
	= 
	\Bo[H_{1,0}] 
	= 
	\Bo[H_{0,0}]  
	= 
	\pa_\theta H_{0,0} 
	= 
	H_{0,1},
\end{equation*}
we calculate some of the leading terms in the Taylor expansion of $Q_\ep[\psi(\ep)]$:
\begin{equation*}
\begin{split}
	& Q_\ep[\psi(\ep)]  
		=  
	\psi(\ep)  
	%(e^{-\ii \te} + \ep_1 \psi_{1,0} + \ep_2 \psi_{0,1} + \ep_1^2 \psi_{2,0} + \ep_1 \ep_2 \psi_{1,1}+\ep_2^2 \psi_{0,2} + \dots ) 
	+ 
	\bigl(1+\ep_1 \tfrac{\cos\te}{3} + \ep_1^2 H_{2,0} + \ep_1 \ep_2 H_{1,1} + \dots\bigr)^3 
	\times \\
	& \qquad \qquad \qquad \qquad \qquad \qquad \qquad \qquad \qquad \qquad
	 \times  \bigl(\ep_1 \Bo[\psi_{1,0} ] + \ep_2 \Bo[\psi_{0,1} ] + \dots\bigr)
	\\ 
	& \qquad \qquad \quad
	 - \ep_1 \cos(\te) 
	\bigl(1+\ep_1 \tfrac{\cos\te}{3} + \ep_1^2 H_{2,0} + \ep_1 \ep_2 H_{1,1} + \dots\bigr)^2  
	\times \\
	& \qquad \qquad \qquad \qquad \qquad \qquad \qquad \qquad \qquad \qquad
	 \times 
	\bigl(e^{-\ii \te} + \ep_1 \psi_{1,0}  + \ep_2 \psi_{0,1}  + \dots\bigr) 
	\\ & \qquad \quad 
	+ 3 \bigl(1+\ep_1 \tfrac{\cos\te}{3} + \ep_1^2 H_{2,0} + \dots\bigr)^2  
	\bigl(\ep_1^2 \Bo[H_{2,0}] + \ep_1 \ep_2 \Bo[H_{1,1}] + \dots\bigr) 
	\times \\
	& \qquad \qquad \qquad \qquad \qquad \qquad \qquad \qquad \qquad \qquad  \qquad \qquad
	 \times 
	\bigl(e^{-\ii \te} + \ep_1 \psi_{1,0}  + \dots\bigr)
	\\ & \qquad \
	+ \ep_2 \sin(\te) 
	\bigl(1+\ep_1 \tfrac{\cos\te}{3} + \ep_1^2 H_{2,0} + \dots\bigr)^2 
	\times
	\\ &   \qquad \qquad \qquad 
	\times 
	\Bigl[ 
	3\bigl(-\ep_1 \tfrac{\sin\te}{3} + \ep_1^2 H'_{2,0} + \dots\bigr) 
	\bigl(e^{-\ii \te} + \dots\bigr)  \Bigr. \\
	& \qquad \qquad \qquad \qquad \qquad 
	+ \Bigl.
	\bigl(1+\ep_1 \tfrac{\cos\te}{3} + \dots\bigr) 
	\bigl(-\ii e^{-\ii \te} + \ep_1 \psi_{1,0}' + \ep_2 \psi_{0,1}' + \dots\bigr) 
	\Bigr],
\end{split}
\end{equation*}
where $f'$ means $\pa_\theta f$ and  we denote by $\dots$ unnecessary higher order terms.
%Taking into account that 
%\[
%0 = \Bo \psi_{0,0} = \Bo H_{1,0} = \Bo H_{0,0}  = \pa_\theta H_{0,0} = H_{0,1},
%\]
%let us expand some of the leading terms in the expressions of $ Q_\ep [\psi (\ep)]$:
%\begin{align*}
%& Q_\ep [\psi (\ep)]  = \psi (\ep)  
%%(e^{-\ii \te} + \ep_1 \psi_{1,0} + \ep_2 \psi_{0,1} + \ep_1^2 \psi_{2,0} + \ep_1 \ep_2 \psi_{1,1}+\ep_2^2 \psi_{0,2} + \dots ) 
% + (1+\ep_1 3^{-1} \cos \te + \ep_1^2 H_{2,0} + \ep_1 \ep_2 H_{1,1} + \dots )^3 
%(\ep_1 \Bo \psi_{1,0} + \ep_2 \Bo \psi_{0,1} + \dots)
%\\ & \qquad \qquad
%- \ep_1 \cos (\te) 
%(1+\ep_1 3^{-1} \cos \te + \ep_1^2 H_{2,0} + \ep_1 \ep_2 H_{1,1} + \dots )^2  
%(e^{-\ii \te} + \ep_1 \psi_{1,0} + \ep_2 \psi_{0,1} + \dots) 
%\\ & \qquad 
%+ 3 (1+\ep_1 3^{-1} \cos \te + \ep_1^2 H_{2,0} + \dots )^2  (\ep_1^2 \Bo H_{2,0} + \ep_1 \ep_2 \Bo H_{1,1} + \dots) (e^{-\ii \te}  + \ep_1 \psi_{1,0} + \dots)
%\\ & \!\!\!\!\!\!\! + \ep_2 \sin (\te) (1+\ep_1 3^{-1} \cos \te + \ep_1^2 H_{2,0} + \dots )^2 \times
%\\ &  \!\!\!\!\!\!\!\!\!\!\!\!\!\!\!\!\! \times 
%\left[ 
%3(-\ep_1 3^{-1}  \sin \te + \ep_1^2 H'_{2,0} + \dots) (e^{-\ii \te} + \dots) + 
%(1+\ep_1 3^{-1} \cos \te + \dots ) (-\ii e^{-\ii \te} + \ep_1 \psi'_{1,0} + \ep_2 \psi'_{0,1} + \dots) 
%\right] 
%\end{align*}
%with $\dots$ denoting unnecessary higher order terms.
Moreover, we observe that 
the normalization $(\psi(\ep) | e^{-\ii \theta}) = (\psi(\ep) | \psi_{0,0} ) = 1$ implies $
	(\psi_{j,k}  | \psi_{0,0} ) 
	= 0$, \ $ 	(j,k) \in \Nbb_0^2\setminus(0,0)$.
%As the first test function in (\ref{eq:psiphi}) we take $\vphi = \ii \psi_{0,0} = \ii e^{- \ii \te} $. Then 
%\begin{gather*}
%\text{the normalization 
%	$(\psi (\ep)  | e^{-\ii \theta}) = (\psi (\ep)  | \psi_{0,0} ) =  1$ 
%	of $\psi (\ep)$ implies}  
%\\
%\text{$(\psi_{j,k} | \psi_{0,0}) = 0$ for all $\{j,k\} \in \Nbb_0^2$ such that $\{j,k\} \neq \{0,0\}$.}
%%\quad (obviously, $(\psi_{0,0} | \psi_{0,0}) =1$).}
%\end{gather*}
In addition, we have that
\[
\text{
$
	\pa_\te \psi_{0,0}  
	= 
	-\ii \psi_{0,0}  
	= 
	-\ii \ee^{-\ii \te}, $ 	\
$	\Bo^*[e^{-\ii \te}] 
	= 
	-\Bo[\ee^{-\ii \te}] 
	= 
	0$, 
	}
	\]
	and hence, $	(\Bo[\psi_{j,k} ] | \ee^{-\ii \th})  =  	0$.
%Note also that $\pa_\te \psi_{0,0} = -\ii \psi_{0,0} = -\ii \ee^{-\ii \te} $ and $\Bo^* e^{-\ii \te}  = - \Bo \ee^{-\ii \te} = 0 $.
%In particular, $( \Bo \psi_{j,k}| \ee^{-\ii \th} ) = 0$.
Now we consider the eigenvalue problem \eqref{eq:psiphi} with the test function $\vphi = \ii \psi_{0,0}  = \ii e^{- \ii \te}$. This leads to 
\begin{align*}
	- \ii \la^+(\ep)  
	& =  
	(\la^+(\ep) \psi(\ep)\, | \, \ii \psi_{0,0} ) 
	= 
	(Q_\ep[\psi(\ep)] \,  | \, \ii \pa_\te \psi_{0,0} ) 
	=  
	(Q_\ep[\psi(\ep)]\, | \, \ee^{-\ii \te}) 
	\\  
	& =  
	1 
	+ 
	\left(\ \ep_1^2 
	\left(\cos(\te) \Bo[\psi_{1,0} ] \, | \, \ee^{-\ii \te} \right) 
	+ 
	\ep_1 \ep_2 
	\left(\cos(\te) \Bo[\psi_{0,1} ]\, | \, \ee^{-\ii \te}\right) + \dots %O (|\ep|^3) \ 
	\right) 
	\\ & \quad  
	- \left(\ \ep_1 \left( \cos(\te) [\ep_1 \psi_{1,0}  + \ep_2 \psi_{0,1} ] \, | \, \ee^{-\ii \te} \right) 
	+ \tfrac23 \ep_1^2 \left( \cos^2(\te) e^{-\ii \te} \, | \, e^{-\ii \te} \right)  
	+ \dots % O (|\ep|^3) \ 
	\right)
	\\ & \quad  
	+ 3 \left( e^{-\ii \te} \Bo \left[ \ep_1^2  H_{2,0} + \ep_1 \ep_2 H_{1,1} + \dots %O (|\ep|^3) 
	\right] 
	\, \big\vert \, \ee^{-\ii \te} \right) 
	\\ & \quad  
	-  
	\left(\ep_1 \ep_2 \, (\sin^2(\te) \ee^{-\ii \th} \, | \, \ee^{-\ii \te}) - 
	\ep_2 \, (\, \sin(\te) [\ep_1 \psi_{1,0}' + \ep_2 \psi_{0,1}'] \, | \, \ee^{-\ii \te} \, ) + \dots % O (|\ep|^3) 
	 \right)
\end{align*} 
%where $\dots$ stands for $O (|\ep|^3)$.
%Hence, (\ref{eq:psiphi}) with $\vphi = \ii e^{- \ii \te} $ takes the form
%\begin{align*}
%- \ii \la^+ (\ep)  & =  ( \la^+ (\ep) \psi (\ep)| \ii \psi_{0,0} ) = (  Q_\ep \psi (\ep) | \ii \pa_\te \psi_{0,0} ) =  (  Q_\ep \psi (\ep) |  \ee^{-\ii \te} ) 
%\\  
%& =  1 + (\ep_1^2 ( \cos (\te)  \Bo \psi_{1,0}  | \ee^{-\ii \te} ) + \ep_1 \ep_2 ( \cos (\te)  \Bo \psi_{0,1}  | \ee^{-\ii \te} ) + \dots) 
%\\ & \quad  
%- \left(\ep_1 ( \cos (\te) [\ep_1 \psi_{1,0} + \ep_2 \psi_{0,1} ]  | \ee^{-\ii \te} ) 
%+ \tfrac23 \ep_1^2 ( \cos^2 (\te) e^{-\ii \te}  | e^{-\ii \te} )  + \dots \right)
%\\ & \quad  
%+ 3 ( e^{-\ii \te} \Bo (\ep_1^2  H_{2,0} + \ep_1 \ep_2 H_{1,1} + \dots)  | \ee^{-\ii \te} ) 
%\\ & \quad  
%+  \left( - \ep_1 \ep_2 ( \sin^2 (\te)  \ee^{- \ii \th} | \ee^{-\ii \te} ) + 
%\ep_2 ( \sin (\te) [ \ep_1 \psi'_{1,0} + \ep_2 \psi'_{0,1}]  | \ee^{-\ii \te} ) + \dots \right) ,
%\end{align*} 
%where $'\dots'$ stands for $O (\ep_1^3,\ep_2^3)$.
%terms with $\ep_1^j \ep_2^k$, $j+k >2$.  
Thus, in addition to the starting point 
$\la^+_{0,0} = \ii$ of our calculations, we have
$
\la^+_{1,0} = \la^+_{0,1} = 0.
$ 
Taking into account that
\begin{gather*}
	(\cos(\te) \Bo[v] | \ee^{-\ii \te} ) 
	= 
	(v | -\Bo[\cos(\te) \ee^{-\ii \te}]) 
	= 
	3 \ii b (v | e^{-2\ii \th}), 
	\\
	(\cos(\te) \psi_{j,k}  | \ee^{-\ii \te}) 
	= 
	(\psi_{j,k}  | \cos(\te) \ee^{-\ii \te}) 
	= 
 	\tfrac{1}{2} (\psi_{j,k}  | \ee^{-2\ii \te}), 
 	\\
    (\cos^2(\te) e^{-\ii \te} | e^{-\ii \te}) 
    = 
    (\sin^2(\te) \ee^{- \ii \th} | \ee^{-\ii \te}) 
    = \tfrac{1}{2}, 
    \\
  	(e^{-\ii \te} \Bo[v] | \ee^{-\ii \te}) 
  	=  
  	(v | -\Bo[\one]) 
  	= 
  	0, 
  	\\  
  	(\sin(\te) [\psi_{j,k} ]' | \ee^{-\ii \te}) 
  	=  
  	([\psi_{j,k} ]' | -\tfrac{1}{2 \ii} \ee^{-2\ii \te}) 
  	= 
   	-(\psi_{j,k}  | \ee^{-2\ii \te}),
\end{gather*}
we further obtain 
\begin{gather*}
	\la^+_{2,0} 
	=  
	\ii \Bigl[\bigl(3\ii b - \tfrac{1}{2}\bigr)  (\psi_{1,0}  | e^{-2\ii \th}) 
	- \tfrac{1}{3}\Bigr] 
	= 
	\bigl(-3b - \tfrac{\ii}{2}\bigr)  
	(\psi_{1,0}  | e^{-2\ii \th}) 
	- \tfrac{\ii}{3}, 
	\\
	\la^+_{1,1} 
	=    
	- \ii 
	(\psi_{1,0}  | e^{-2\ii \th})  
	+ 
	\bigl(-3b - \tfrac{\ii}{2}\bigr)  
	(\psi_{0,1}  | e^{-2\ii \th})  
	- \tfrac{\ii}{2},
	%\\
	%\text{ and } \ 
	\qquad 
	\la^+_{0,2} 
	= 
	- (\psi_{0,1}  | e^{-2\ii \th}).
\end{gather*}

To obtain the second-order Taylor coefficients for $\la^+ (\cdot)$,  it remains to determine the inner products $(\psi_{1,0}  | e^{-2\ii \th})$ and $(\psi_{0,1}  | e^{-2\ii \th})$. For this purpose, we use $\vphi = (-2\ii)^{-1} e^{-2\ii \th}$ as a test function in \eqref{eq:psiphi}. This yields
\begin{align*} %\label{}
	& (Q_\ep[\psi(\ep)] | e^{- 2\ii \te}) 
	 = \left( A^\loc (\ep) [\psi(\ep)]  \mid (-2\ii)^{-1} e^{-2\ii \th} \right)  = 
	\left( \la^+(\ep) \psi(\ep) \mid (-2\ii)^{-1} e^{-2\ii \th} \right)   
	\\ 
	& \qquad \qquad = 
	\tfrac{1}{2\ii} \Bigl( \bigl[\ii + \ep_1^2 \la^+_{2,0} + O(|\ep|^2) \bigr] \left[e^{-\ii \th} + \ep_1 \psi_{1,0}  + \ep_2 \psi_{0,1}  + O(|\ep|^2) \right] \, \Big\vert \, 		e^{-2\ii \th} \Bigr) 
	\\ 
	&  \qquad \ = 
	%  (2\ii)^{-1} ( \ii (\ep_1 \psi_{1,0} + \ep_2 \psi_{0,1})| e^{-2\ii \te} ) +\dots   =
	\tfrac{1}{2} \ep_1 
	(\psi_{1,0}  | e^{-2\ii \te} ) + \tfrac{1}{2} \ep_2 (\psi_{0,1}   |  e^{-2\ii \te} ) + O(|\ep|^2) ; 
	%O(|\ep|^2) .
\end{align*}
%Taking  $\vphi = (-2\ii)^{-1} e^{-2\ii \th}$ in (\ref{eq:psiphi}), one gets 
%\begin{align*} %\label{}
% (  \Bcal_\ep \psi (\ep) | e^{- 2\ii \te} ) & = 
% (\la^+ (\ep) \psi (\ep) | (-2\ii)^{-1}  e^{-2\ii \th} )   
% \\ &= (2\ii)^{-1} ((\ii + \ep_1^2 \la^+_{2,0} + \dots ) (e^{-\ii \th} + \ep_1 \psi_{1,0} +\ep_2 \psi_{0,1}+\dots ) |  e^{-2\ii \th} ) 
% \\ & = 
%%  (2\ii)^{-1} ( \ii (\ep_1 \psi_{1,0} + \ep_2 \psi_{0,1})| e^{-2\ii \te} ) +\dots   =
%2^{-1} \ep_1 ( \psi_{1,0}| e^{-2\ii \te} ) + 2^{-1} \ep_2 ( \psi_{0,1} )| e^{-2\ii \te} ) + O (\ep_1^2,\ep_2^2) ,
%\end{align*}
on the other hand, using the expansion of $Q_\ep[\psi(\ep)]$, we find that
\begin{align*} %\label{}
	(Q_\ep [\psi (\ep)] \mid \ee^{- 2\ii \te}) 
	& =
	\ep_1 ( \psi_{1,0} \mid e^{-2\ii \te} ) + \ep_2 ( \psi_{0,1}  \mid e^{-2\ii \te} ) + O(|\ep|^2)
	\\ 
	& \qquad \qquad 
	+ \Bigl( \Bo \left[\ep_1  \psi_{1,0}  + \ep_2  \psi_{0,1}  + O(|\ep|^2)\right] \, \Big\vert \, e^{-2\ii \te} \Bigr)
	\\
	& \qquad \qquad 
	- \ep_1 ( \cos (\te) e^{- \ii \te} \mid  e^{-2\ii \te} ) 
	\\ 
	& \qquad \qquad 
	+ \ep_2 \ ( \, - \ii \sin (\te)  e^{-\ii \te} \mid e^{-2\ii \te} \, ) +  O(|\ep|^2)
	\\ 
	& = 
	\ep_1 (1+6 \ii b)( \psi_{1,0} \mid e^{-2\ii \te} ) 
	+ 
	(1+6 \ii b) \ep_2 
	(\psi_{0,1}  \mid e^{-2\ii \te}) 
	\\
	& \qquad \qquad 
	- 
	\tfrac{\ep_1}{2} 
	+ 
	\tfrac{\ep_2}{2} 
	+ O(|\ep|^2) .
\end{align*}
%where this time $'\dots'$ means terms of order $O(|\ep|^2)$.
%Moreover, using the expansion of  $Q_\ep[\psi(\ep)]$, we find that
%\begin{align*} %\label{}
%	(Q_\ep \psi (\ep) | \ee^{- 2\ii \te} ) & =
%	\ep_1 ( \psi_{1,0}| e^{-2\ii \te} ) + \ep_2 ( \psi_{0,1} | e^{-2\ii \te} ) + \dots 
%	\\ 
%	& \qquad 
%	+ (\Bo (\ep_1  \psi_{1,0} + \ep_2  \psi_{0,1} + \dots) | e^{-2\ii \te} )
%	- \ep_1 ( \cos (\te) e^{- \ii \te}| e^{-2\ii \te} ) 
%	\\ 
%	& \qquad 
%	+ \ep_2 (\sin (\te) (- \ii e^{-\ii \te})| e^{-2\ii \te} ) +\dots 
%	\\ 
%	& = 
%	\ep_1 (1+6 \ii b)( \psi_{1,0}| e^{-2\ii \te} ) + (1+6 \ii b) \ep_2 ( \psi_{0,1} | e^{-2\ii \te} )  - \ep_1 2^{-1} + \ep_2 2^{-1} + \dots
%\end{align*}
%where this time $'\dots'$ stands for $O(\ep_1^2,\ep_2^2)$.
%
%and directly from the  expression of $ Q_\ep \psi (\ep)$
%\begin{align*} %\label{}
% (  Q_\ep \psi (\ep) | \ee^{- 2\ii \te} ) & =
% \ep_1 ( \psi_{1,0}| e^{-2\ii \te} ) + \ep_2 ( \psi_{0,1} | e^{-2\ii \te} ) + \dots 
% \\ & \qquad 
% + (\Bo (\ep_1  \psi_{1,0} + \ep_2  \psi_{0,1} + \dots) | e^{-2\ii \te} )
% - \ep_1 ( \cos (\te) e^{- \ii \te}| e^{-2\ii \te} ) 
%  \\ & \qquad 
%  + \ep_2 (\sin (\te) (- \ii e^{-\ii \te})| e^{-2\ii \te} ) +\dots 
%  \\ & = 
%  \ep_1 (1+6 \ii b)( \psi_{1,0}| e^{-2\ii \te} ) + (1+6 \ii b) \ep_2 ( \psi_{0,1} | e^{-2\ii \te} )  - \ep_1 2^{-1} + \ep_2 2^{-1} + \dots
% \end{align*}
%where $'\dots'$ stands for $O (\ep_1^2,\ep_2^2)$.
%terms with $\ep_1^j \ep_2^k$, $j+k >1$.
This implies
$
	(\psi_{1,0} \mid e^{-2\ii \te} )  
%	= 
%	\tfrac{1}{
%	2\left(\tfrac{1}{2}+6 \ii b\right)}
	= 
	\frac{1}{
	(1+12 \ii b)}
	= 
	\frac{1-12\ii b}{1+144b^2}
	$
	  and  
	$ 
	(\psi_{0,1}  \mid e^{-2\ii \te} ) 
%	= 
%	(2^{-1}+6 \ii b)^{-1} (-2^{-1}) 
	= 
	\frac{-1+12\ii  b}{1+144b^2} $.
%This gives
%\begin{align*} %\label{}
% 	& 
% 	(\psi_{1,0}| e^{-2\ii \te} )  
% 	= 
% 	(2^{-1}+6 \ii b)^{-1} 2^{-1} 
% 	= 
% 	(1+12 \ii b)^{-1}= (1-12\ii  b)/(1+144b^2),
%	\\
%	& ( \psi_{0,1} | e^{-2\ii \te} ) = (2^{-1}+6 \ii b)^{-1} (-2^{-1}) = (-1+12\ii  b)/(1+144b^2).
%\end{align*}
Finally, 
\begin{align*} %\label{e:la20}
	\la^+_{2,0} 
	&=  
	\frac{(-6b - \ii)(1- 12 \ii b)}{2(1+144b^2)} 
	- 
	\frac{\ii}{3} 
	= 
	\frac{-18b+\ii (72b^2-1)}{2(1+144b^2)} 
	- 
	\frac\ii3, 
 	\\
	\la^+_{1,1} 
	&=  
	- \ii \frac{1-12 \ii b}{1+144b^2} +  \frac{(-6b - \ii)(-1+12\ii  b)}{2(1+144b^2)}  - \frac\ii2 
	%\notag \\ 	&
	=
%	\frac{-2\ii(1-  12 \ii b) + (-6b - \ii)(-1+12\ii  b)}{2(1+144b^2)}-\frac\ii2
%	\notag \\
%	&= 
	\frac{-6b -\ii (1+72b^2)}{2(1+144b^2)}-\frac\ii2,
	%\label{e:la11}
	\\
	\la^+_{0,2} 
	&= 
	%- \frac{-1+12\ii  b}{1+144b^2} 
	%=  
	\frac{1-12\ii  b}{1+144b^2}  .
	%= \frac{2 - 24 \ii b}{2(1+144b^2)}.
	%\label{e:la02}
\end{align*}
Since $\rla (\ep) = \Re \la^+(\ep)$, we observe that the above calculations yield 
\begin{align} \label{e:rela}
	(1+144b^2) \rla (\ep) =  -9b \ep_1^2 -3b \ep_1 \ep_2 +
	\ep_2^2 + O (|\ep|^3) .
\end{align}
Lemma \ref{l:n=2Det<0} combined with Proposition \ref{p:Av} completes the proof of 
Theorem \ref{t:AHyp}.
%Recall that $\rla (\ep ) = \Re \la^+ (\ep)$. The computations above  give
%\begin{align} \label{e:rela}
%(1+144b^2) \rla (\ep) =  -9b \ep_1^2 -3b \ep_1 \ep_2 +
%\ep_2^2 + O (\ep_1^3,\ep_2^3) .
%\end{align}

\subsection{Proof of Theorem \ref{t:HB}}%, stability, instability, and Hopf bifurcations
\label{s:Stability}

\begin{remark}
Statement (iii) of Theorem \ref{t:HB} is a simplified form of the Hopf bifurcation theorem and follows from the combination of Lemma \ref{l:n=2Det<0}, Theorem \ref{t:n=2Det<0},
and Proposition \ref{p:HbOnPath} with the explicit form  \eqref{e:rela} of the leading Taylor coefficients of the function $\rla$.
More specifically, the statement \eqref{e:HBfTh} follows from Theorem \ref{t:n=2Det<0}. Statement \eqref{e:periodic} is a part  of the definition of a Hopf bifurcation on a path (see Definitions \ref{d:AHbpath} and \ref{d:Hbpath}) and follows from Proposition \ref{p:HbOnPath}. Thus, it remains to prove statements (i) and (ii) of Theorem \ref{t:HB}.
\end{remark}

The main goal of this subsection is to obtain the stability and instability statements (i)--(ii) of Theorem \ref{t:HB} from  the Principle of Linearized Stability for abstract fully nonlinear parabolic equations, see the monograph of Lunardi  \cite{L12}.

Recall that for fully nonlinear parabolic equations one cannot expect global existence of strong solutions for a general initial state. %$u_0  = u (0)$. 
Roughly speaking, the main reasons  for this effect in the case of the rimming flow equation \eqref{eq:f}--\eqref{eq:f1} can be either the loss of the parabolicity if the solution $h$ vanishes at a certain point, or blow-up of the solution.
However,  if the parameter $\ep$ is small enough and  the eigenvalues are in the 'stable complex half-plane' one can 
ensure that these effects do not happen for the initial values $h_0$ that are close enough to the 2-dimensional surface of uniformly positive perturbed equilibria   $H (\ep)$ of Theorem \ref{thm:solution_ODE}, and moreover, the Principle of Linearized Stability can be used to derive the exponential convergence to the equilibrium $H (\ep)$ lying in the same invariant hyperplane as $h_0$.

On the rigorous level, our aim is to show that the reduced rimming flow equation $\pa_t u = F (u;\ep)$  (see \eqref{e:PDEHyp2}) fits
into the settings  of \cite[Section 9.1.1]{L12} near the stationary solution $u \equiv \zero$ for small enough positive  $\ep_1$ and $\ep_2$. 
In the settings of the Principle of Linearized Stability \cite[Theorem 9.1.2]{L12},
the parabolicity assumption corresponds to \cite[assumptions (8.0.3) and (9.1.2)]{L12}.
Let us show how the validity of these  assumptions can be obtained from the perturbative arguments  of Section VII.2.1 of  the monograph of Kato \cite{Kato}.

It follows from \eqref{e:A000} and \eqref{e:A00} that
%, on $\dom A (\zero,0) = \dotH_\crm^4 (S^1) $,  
the graph norm associated with the operator \linebreak $A (\zero,0) \colon  \dotH_\crm^4 (S^1)  \subset \dotL^2_\crm (S^1) \to \dotL^2_\crm (S^1) $  is equivalent to the norm of $\dotH_\crm^4 (S^1) $.
Section \ref{s:ReductionProof} implies that there exists $\de>0$ such that statement (i) of Proposition \ref{p:Av} is fulfilled. Taking smaller $\de >0$ if necessary, one can ensure that the graph norm of $A(\zero;\ep)$ on $\dom A(\zero;\ep) = \dotH^4_\crm(S^1)$ is equivalent to the norm 
 $ \dotH^4_\crm(S^1)$ for all $\ep \in \Dbb^2_\de (0)$, see \cite[Section VII.2.1]{Kato}.
 The $\Hbb$-sectoriality of $A(\zero;\ep)$ was proved in Proposition \ref{p:Av} (i).
 Consequently, \cite[assumptions (8.0.3) and (9.1.2)]{L12} are satisfied for $A(\zero;\ep)$ with $\ep \in \Dbb^2_\de (0)$.

We assume in the rest of this subsection that $\ep \in \Dbb^2_\de (0)$. 
 Since $\wt F \colon \dotH_\crm^4 (S^1) \times \Dbb_{\de_0}^2 (0) \to  \dotL^2_\crm(S^1) $  is holomorphic, one sees that the results of 
\cite[Sections 8.1-8.2]{L12} are applicable to the initial value problem 
\begin{gather} \label{e:IVPtF}
\pa_t u = \wt F (u;\ep), \qquad u (0) = u_0,
\end{gather}
  with
$u_0$ in a certain $\dotH^4_\crm(S^1)$-neighborhood $ \Ncal (\ep)$ of $\zero$.
In particular, for every $u_0  \in \Ncal (\ep)$, there exists $\wt t = \wt t (\ep,u_0) \in (0,+\infty)$ such that  \eqref{e:IVPtF} has a unique solution $u(t)$, $t \in [0, \wt t (\ep,u_0)]$, in the sense of \cite[Theorem 8.1.1]{L12}. Moreover, there exists a maximal interval  $[0, \tau (\ep,u_0))$ with $\tau (\ep,u_0) \in (0,+\infty]$ such that such a unique solution $u(t)$ to  \eqref{e:IVPtF} exists in the sense of \cite[Proposition 8.2.1]{L12} on  all intervals  $[0, \wt t ] \subset [0, \tau (\ep,u_0))$ and the trajectory $\{ u(t) \ : \ t \in [0,  \tau (\ep,u_0))\}$ stays in $\Ncal (\ep)$.

Let us take  $\delta_3,\delta_4 \in (0,\de)$ and the function $\ep_1 \mapsto \Gtwo (\ep_1)$ such that the statements of Theorem \ref{t:AHyp} are satisfied. Strict solutions are understood in the sense of \cite[Definition 4.1.1 and Section 8.1]{L12}.
For a Banach space $\Ycal$, a Hölder exponent  $\alpha \in (0,1)$, and a constant  $T>0$, the weighted Hölder space $C^\alpha_\alpha ((0,T];\Ycal)$
is defined \cite{L12} as the set of bounded functions $f \colon (0,T] \to \Ycal$ with the property 
\[
\sup_{0<\de<T} \left(  \de^\alpha  \sup_{\de\le s<t \le T} \frac{\|f (t) - f(s) \|_\Ycal}{|t-s|^\alpha} \right) \ < \ +\infty .
\]

\begin{proposition}[exponential stability] \label{p:stability}
For all $\ep \in (0,\delta_3)\times (0,\delta_4)$ satisfying
$0 < \ep_2 < \Gtwo (\ep_1) $, there exist constants $\rho_1=\rho_1 (\ep)>0$, $M=M(\ep)>0$,  and $\omega=\omega(\ep) > 0$  such that the following statements hold true for all $u_0 \in \Bbb_{\rho_1} (\zero; \dot{\Hcal}^4_\crm (S^1))$ and all $\alpha \in (0,1)$  with a certain $T = T (\ep,u_0) >0$:
\begin{itemize}
\item[(i)] There exists a unique strict solution 
\begin{equation} \label{e:StrSolClass}
u \   \in  \ C ([0,T]; \dotH^4_\crm (S^1) ) \, \cap \, C^1 ([0,T]; \dotL^2_\crm (S^1) ) \, \cap \, C^\alpha_\alpha ((0,T]; \dotH^4_\crm (S^1)) 
\end{equation}
  to the initial value problem \eqref{e:IVPtF}. If, additionally, the initial value $u_0$ is a real-valued function, the solution 
  $u$ is also real-valued. 
%\begin{equation*}
%	h \in C\bigl([0,\infty);\Hcal^4_\crm(S^1)\bigr) 	\cap 	C^1\bigl((0,\infty);\Hcal^4_\crm(S^1)\bigr)
%\end{equation*}
%to \eqref{eq:PDE_steady} with initial condition $h_0 \in \Hcal^4_\crm(S^1)$.
\item[(ii)] The interval $[0, +\infty)$ is the maximal  interval $[0, \tau (\ep,u_0))$ for the continuation of this strict solution $u$ in the sense of \cite[Proposition 8.2.1]{L12}.
\item[(iii)] 
% \begin{equation*}
$	\|u(t) \|_{\Hcal_\crm^4 (S^1)} + \|\pa_t u(t) \|_{L^2_\crm (S^1)} 
	\leq
	M e^{-\omega t} \|u_0 \|_{\Hcal_\crm^4 (S^1)}
	$ \quad for all $\ 	t > 0. $
\end{itemize}	
%\end{equation*}
\end{proposition}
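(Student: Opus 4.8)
The plan is to verify that the reduced rimming-flow equation $\pa_t u = \wt F(u;\ep)$ meets all hypotheses of the Principle of Linearized Stability as stated in \cite[Theorem 9.1.2]{L12}, and then simply read off the conclusion. First I would fix $\ep \in (0,\delta_3)\times(0,\delta_4)$ with $\ep_2 < \Gtwo(\ep_1)$. By Theorem \ref{t:AHyp}(ii) this is exactly the regime in which $\Re \la^\pm(\ep) < 0$; moreover, by Proposition \ref{p:Av}(i)--(ii) the operator $A(\zero;\ep) = A^\loc(\ep)$ is $\Hsec$-sectorial and all of $\si(A(\zero;\ep))$ lies in a half-plane $\{\Re \zeta \le \xi(\ep)\}$ with $\xi(\ep) < 0$: the pair $\la^\pm(\ep)$ has strictly negative real part by the choice of $\ep$, and every other eigenvalue $\om_n = -b(n^4-n^2) - \ii n$ perturbs only slightly and stays bounded away from $\ii\Rbb$ on the left (for $\de$ small, $\Re \la \le -12b + o(1)$ for those). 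Thus $\zero$ is spectrally exponentially stable in the sense of Definition \ref{d:SpStability}(i). I would then set $\omega = \omega(\ep) := -\tfrac12 \xi(\ep) > 0$ (any number strictly between $0$ and $|\xi(\ep)|$ works).

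Next I would check the structural (parabolicity and regularity) hypotheses that the Principle of Linearized Stability requires, namely \cite[assumptions (8.0.3) and (9.1.2)]{L12}. This is precisely what the paragraphs preceding the proposition establish: the graph norm of $A(\zero;\ep)$ on $\dom A(\zero;\ep) = \dotH^4_\crm(S^1)$ is equivalent to the $\dotH^4_\crm(S^1)$-norm (true for $\ep = 0$ by the explicit diagonalization \eqref{e:A00}, and stable under the $A^\loc(0)$-bounded perturbation $A^\loc(\ep) - A^\loc(0)$ by \cite[Section VII.2.1]{Kato}, possibly after shrinking $\de$), and $A(\zero;\ep)$ is $\Hsec$-sectorial by Proposition \ref{p:Av}(i). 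Together with the holomorphy (hence local analyticity, hence $C^1$ and Lipschitz) of $\wt F \colon \dotH^4_\crm(S^1) \times \Dbb^2_{\de_0}(0) \to \dotL^2_\crm(S^1)$ established in Section \ref{s:ReductionProof}, and the fact that $\zero$ is an equilibrium ($\wt F(\zero;\ep) = F(\zero;\ep) = 0$), all hypotheses of \cite[Theorem 9.1.2]{L12} are in force at the equilibrium $\zero$.

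With the hypotheses verified, \cite[Theorem 9.1.2]{L12} directly yields: there exist $\rho_1 = \rho_1(\ep) > 0$ and $M = M(\ep) > 0$ such that for every $u_0 \in \Bbb_{\rho_1}(\zero;\dotH^4_\crm(S^1))$ the initial value problem \eqref{e:IVPtF} has a unique strict solution defined on all of $[0,+\infty)$ (so the maximal continuation interval $[0,\tau(\ep,u_0))$ is $[0,+\infty)$, giving (ii)), this solution lies in the regularity class \cite[Definition 4.1.1 and Section 8.1]{L12}, i.e.\ in $C([0,T];\dotH^4_\crm) \cap C^1([0,T];\dotL^2_\crm) \cap C^\alpha_\alpha((0,T];\dotH^4_\crm)$ for every $\alpha \in (0,1)$ and some $T = T(\ep,u_0) > 0$, and it satisfies the decay estimate $\|u(t)\|_{\dotH^4_\crm} + \|\pa_t u(t)\|_{\dotL^2_\crm} \le M e^{-\omega t}\|u_0\|_{\dotH^4_\crm}$ for all $t > 0$, which is (iii). (The constant $M$ absorbs the $\pa_t u$ term since $\pa_t u(t) = \wt F(u(t);\ep) = A(\zero;\ep)u(t) + o(\|u(t)\|)$ and $A(\zero;\ep)$ is bounded $\dotH^4_\crm \to \dotL^2_\crm$; alternatively it is part of the statement of \cite[Theorem 9.1.2]{L12}.) For the reality claim in (i): if $u_0$ is real-valued, then since the differential expression defining $\wt F$ has real coefficients, $\overline{u(t)}$ solves the same problem with the same real initial datum, so by uniqueness $u(t) = \overline{u(t)}$, i.e.\ $u(t)$ is real-valued for all $t$.

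The main obstacle here is not any single estimate but the bookkeeping of matching our concrete setting to the precise hypothesis list of \cite[Theorem 9.1.2]{L12} — in particular confirming that the $A^\loc(0)$-boundedness of $A^\loc(\ep) - A^\loc(0)$ (which follows by inspection of \eqref{e:A1} and the analytic dependence of $H(\ep)$ from Proposition \ref{p:CsolutionODE}, all lower-order than $\pa_\theta^4$) is uniform for $\ep$ in a fixed polydisc, so that the graph-norm equivalence and hence the parabolicity assumptions hold uniformly; once that is in place, everything else is a citation. The only genuine input beyond Kato's perturbation machinery and Lunardi's abstract theorem is the sign information $\Re \la^\pm(\ep) < 0$ in the region $\ep_2 < \Gtwo(\ep_1)$, which is exactly Theorem \ref{t:AHyp}(ii).
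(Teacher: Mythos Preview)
Your proposal is correct and follows essentially the same route as the paper: verify the parabolicity and regularity hypotheses of \cite[Theorem 9.1.2]{L12} (graph-norm equivalence via Kato, $\Hsec$-sectoriality from Proposition \ref{p:Av}, holomorphy of $\wt F$), use Theorem \ref{t:AHyp}(ii) to place the spectrum strictly in the left half-plane, and then invoke the Principle of Linearized Stability. The only minor imprecision is in your reality argument: the claim that ``the differential expression defining $\wt F$ has real coefficients'' relies on $H(\cdot;\ep)$ being real-valued for real $\ep$, which the paper supplies via Remark \ref{r:real_eps}; once that is noted, your conjugation-plus-uniqueness step is exactly the paper's.
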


\begin{proof}
Let us fix an arbitrary $\al \in (0,1)$. Let $\ep \in  (0,\delta_3)\times (0,\delta_4)$.
Under our assumptions, $\ep \in  (0,\delta_3)\times (0,\delta_4) \subset \Dbb^2_\de (0)$ ensures that $A(\zero;\ep)$ is $\Hbb$-sectorial.
Since the $\Hbb$-sectoriality property is stable in the sense of \cite[Proposition 2.4.2]{L12}, we may choose a radius $\rho_0 = \rho_0 (\ep)>0$ such that \cite[assumptions (8.0.3) and (8.1.1)]{L12} are satisfied for the map $\wt F(u_0;\ep)$ for every $u_0 \in \Bbb_{\rho_0 (\ep)} (\zero; \dot{\Hcal}^4_\crm (S^1))$.
% and $\ep \in  (0,\delta_3)\times (0,\delta_4)$. 
 This allows one to apply \cite[Theorem 8.1.1]{L12}   in order to build a unique local solution satisfying \eqref{e:StrSolClass}
%\[
%u \ \in \ C ([0,T], \dotH^4_\crm (S^1) ) \ \cap \ C^1 ([0,T], \dotL^2_\crm (S^1) ) \ \cap \ C^\alpha_\alpha ((0,T]; \dotH^4_\crm (S^1)) 
%\]
to the initial value problem \eqref{e:IVPtF} for every initial value $u_0 \in \Bbb_{\rho_0 (\ep)} (\zero; \dot{\Hcal}^4_\crm (S^1))$. Note that  \cite[Theorem 8.1.1]{L12}  implies that 
$T = T (\ep,u_0) >0$ can be chosen independently of the choice of  $\al \in (0,1)$.

By Proposition \ref{p:Av}, we have $\sigma (A(\zero;\ep)) \subset \{ \la \in \Cbb \ : \ \re \la \le \re \la^+ (\ep) \}$. 
Assume now additionally that $0 < \ep_2 < \Gtwo (\ep_1) $. Then Theorem \ref{t:AHyp} implies that $\re \la^+ (\ep) < 0$, and so,
$\zero$ is a spectrally exponentially stable steady state of $\pa_t u =  F (u;\ep)$. Thus, the Principle of Linearized Stability in the form of \cite[Theorem 9.1.2]{L12} is applicable
and proves statements  (i)-(iii) in the complex case. 

Since the proposition assumes that $\ep_1$ and $\ep_2$ are real, Remark \ref{r:real_eps} implies that $H (\cdot;\ep)$ is real-valued. Therefore, if the initial value $u_0$ is real-valued, the solution $u$ in the sense of statement (i) remains a solution after taking complex conjugation. The uniqueness of the solution $u$ implies that it is real-valued. This completes the proof. 
\end{proof}

Consider now the case, where $\ep \in (0,\delta_3)\times (0,\delta_4)$ and $ \Gtwo (\ep_1) < \ep_2  $.
Then $\zero$ is an unstable steady state of $\pa_t u = F (u;\ep)$ in the sense described by the next proposition,
where backward mild  solutions are understood in the sense of \cite[Sections 4.1 and 4.4]{L12} (see also the proof of \cite[Theorem 9.1.3]{L12}).

\begin{proposition}[instability] \label{p:instability}
Assume that $\ep \in (0,\delta_3)\times (0,\delta_4)$ and $ \Gtwo (\ep_1) < \ep_2  $.
Then there exists $u_1 = u_1 (\ep) \in \dotH^4 (S^1)$ 
%(a terminal state depending on $\ep$) 
with $\|u_1\|_{ \Hcal^4 (S^1)} >0$ and with the following property:
for arbitrarily  small $\rho>0$ and any $\alpha \in (0,1)$, there exists $T>0$ and an initial state $u_0 \in \Bbb_\rho (\zero; \dotH^4 (S^1))$   
such that the terminal value problem
\begin{gather} \label{e:IVPtFreal}
\qquad \pa_t u = F (u;\ep), \qquad u (T) = u_1,
\end{gather}
has a backward mild  solution $u(\cdot)$ in the Hölder space $C^\alpha ([0,T]; \dotH^4 (S^1))$
 with 
$u(0) = u_0$ \  and \ $\pa_t u \in C^\alpha ([0,T]; \dotL^2 (S^1))$.
\end{proposition}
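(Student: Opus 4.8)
The plan is to obtain Proposition~\ref{p:instability} from the instability part of the Principle of Linearized Stability for fully nonlinear parabolic equations, i.e. from the backward--solution (local unstable manifold) construction contained in the proof of \cite[Theorem~9.1.3]{L12}, applied to the reduced equation $\pa_t u = F(u;\ep)$. As in the proof of Proposition~\ref{p:stability}, the verification of \cite[assumptions~(8.0.3) and (9.1.2)]{L12} for $F(\cdot;\ep)$ with $\ep \in (0,\delta_3)\times(0,\delta_4) \subset \Dbb^2_\de (0)$ is already available: $F$ is real analytic, $F(\zero;\ep)=\zero$, $D_wF(\zero;\ep)=A(\zero;\ep)$ is $\Hbb$-sectorial with graph norm equivalent to $\|\cdot\|_{\dotH^4_\crm(S^1)}$, so writing $F(u;\ep) = A(\zero;\ep)u + G(u)$ with $G(u):=F(u;\ep)-A(\zero;\ep)u$ one has $G(\zero)=\zero$, $D_wG(\zero)=0$, and $G$ is Lipschitz from $\dotH^4_\crm(S^1)$ to $\dotL^2_\crm(S^1)$ on a neighborhood of $\zero$, with Lipschitz constant tending to $0$ as the neighborhood shrinks. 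The new ingredient is the spectral gap: since $\Gtwo(\ep_1) < \ep_2$, Theorem~\ref{t:AHyp}(ii) gives $\re \la^\pm(\ep) > 0$, and, combining this with Proposition~\ref{p:Av}(ii) and with $\re\omega_n = -b(n^4-n^2) \le -12b$ for $|n|\ge 2$ (see \eqref{e:A00}), one gets constants $\omega_+ = \omega_+(\ep) > 0$ and $\omega_- := \de_5 > 0$ such that $\sigma(A(\zero;\ep)) = \{\la^+(\ep),\la^-(\ep)\} \cup \sigma_-$, where $\la^\pm(\ep)$ are simple isolated eigenvalues with $\re\la^\pm(\ep) \ge \omega_+$ and $\sigma_- \subset \{\re\zeta \le -\omega_-\}$.

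Next I would carry out the spectral splitting. Let $P = P(\ep) \in \Lcal(\dotL^2_\crm(S^1))$ be the Riesz projection of $A(\zero;\ep)$ corresponding to a small positively oriented contour around $\{\la^+(\ep),\la^-(\ep)\}$; then $\Zcal_+ := P\dotL^2_\crm(S^1)$, the complex span of the eigenfunction $\psi(\ep)$ from Proposition~\ref{p:Av}(iii) and of $\overline{\psi(\ep)}$ (the eigenfunction for $\la^-(\ep)=\overline{\la^+(\ep)}$), is two-dimensional, $\Zcal_- := (I-P)\dotL^2_\crm(S^1)$ is a closed complementary subspace, and both $\Zcal_\pm$ are invariant under $A(\zero;\ep)$ and under $\{e^{tA(\zero;\ep)}\}_{t\ge 0}$. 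On $\Zcal_+$ the restriction $A_+ := A(\zero;\ep)|_{\Zcal_+}$ is bounded with $\sigma(A_+)=\{\la^\pm(\ep)\}$, so $e^{tA_+}$ extends to a group with $\|e^{-tA_+}\|_{\Lcal(\Zcal_+)} \le Me^{-\omega_+ t}$ for $t\ge 0$; on $\Zcal_-$ the restriction $A_- := A(\zero;\ep)|_{\Zcal_-}$ is $\Hbb$-sectorial with $\sigma(A_-)\subset\{\re\zeta\le-\omega_-\}$, whence $\|e^{tA_-}z\|_{\dotL^2_\crm(S^1)} \le Me^{-\omega_- t}\|z\|_{\dotL^2_\crm(S^1)}$ and, using the graph-norm equivalence, $\|e^{tA_-}z\|_{\dotH^4_\crm(S^1)} \le M(1+t^{-1})e^{-\omega_- t}\|z\|_{\dotL^2_\crm(S^1)}$ for $t>0$. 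This is the decomposition required in \cite[Chapter~9]{L12}.

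Third, I would feed this into the construction in the proof of \cite[Theorem~9.1.3]{L12} for $\pa_t u = A(\zero;\ep)u + G(u)$: near $\zero$ it yields a two-dimensional $C^1$ local unstable manifold $W^u = \{z+h(z) : z\in\Zcal_+,\ \|z\|<r\}$, tangent to $\Zcal_+$ at $\zero$ ($h(\zero)=\zero$, $D h(\zero)=0$), such that every orbit lying on $W^u$ for all $t\le 0$ converges to $\zero$ exponentially as $t\to-\infty$. Since $\Zcal_+$ is conjugation-invariant and $\Re\psi(\ep)\ne\zero$ (true at $\ep=0$, where $\psi(0)=\ee^{-\ii\theta}$, hence for small $\ep$ by the continuity in Proposition~\ref{p:Av}(iii)), I fix once and for all a real $z_1 := c\,\Re\psi(\ep)\in\Zcal_+\cap\dotH^4(S^1)$ of small fixed norm and set $u_1 := u_1(\ep) := z_1 + h(z_1)\in W^u$; then $u_1\ne\zero$, so $\|u_1\|_{\Hcal^4(S^1)}>0$, and $u_1\in\dotH^4(S^1)$. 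Let $\phi(\cdot)$ be the orbit on $W^u$ with $\phi(0)=u_1$; on $[0,T]$ put $u(t):=\phi(t-T)$, so $u(T)=u_1$ and the exponential backward convergence lets me, given any $\rho>0$, choose $T=T(\rho)$ large enough that $u_0:=u(0)\in\Bbb_\rho(\zero;\dotH^4(S^1))$. This $u$ is a backward mild solution on $[0,T]$ with $u(T)=u_1$; since $u\in C^{\alpha'}([0,T];\dotH^4_\crm(S^1))$ for some $\alpha'>0$ from the construction, $G(u(\cdot))\in C^{\alpha'}([0,T];\dotL^2_\crm(S^1))$, and the maximal Hölder regularity for the $\Hbb$-sectorial operator $A(\zero;\ep)$ (\cite[Chapters~4 and 8]{L12}) upgrades this to $u\in C^\alpha([0,T];\dotH^4_\crm(S^1))$ and $\pa_t u = A(\zero;\ep)u+G(u)\in C^\alpha([0,T];\dotL^2_\crm(S^1))$ for every $\alpha\in(0,1)$. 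Finally, since $F$ is invariant under complex conjugation (Remark~\ref{r:real_eps}) and $u_1$ is real, $\overline{u(\cdot)}$ is another orbit on the conjugation-invariant manifold $W^u$ with the same terminal value, so by uniqueness of orbits on $W^u$ it coincides with $u(\cdot)$; hence $u(\cdot)$ and $u_0$ are real-valued, giving $u_0\in\Bbb_\rho(\zero;\dotH^4(S^1))$ and $u(\cdot)\in C^\alpha([0,T];\dotH^4(S^1))$.

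The main obstacle is the backward-in-time character of the problem: the analytic semigroup $e^{tA_-}$ on the stable subspace is not invertible, so a naive backward Duhamel iteration diverges, and the argument only closes because the unstable subspace $\Zcal_+$ is \emph{finite}-dimensional — which is precisely the content of Proposition~\ref{p:Av}(ii), namely that only the simple isolated conjugate pair $\la^\pm(\ep)$ crosses into the open right half-plane. Finite-dimensionality makes $e^{tA_+}$ a genuine group, so the Lyapunov--Perron fixed point (run forward on $\Zcal_-$ from time $0$, backward on $\Zcal_+$ from time $T$) is a contraction once the radius of the ball is small and $T$ is large. A secondary technical point, also standard in \cite{L12}, is upgrading the mere continuity of the Lyapunov--Perron fixed point to the weighted-Hölder regularity needed for $\pa_t u$ to be Hölder-continuous into $\dotL^2_\crm(S^1)$; this uses the maximal Hölder regularity for $\Hbb$-sectorial operators together with the Lipschitz continuity of $G$ from $\dotH^4_\crm(S^1)$ into $\dotL^2_\crm(S^1)$.
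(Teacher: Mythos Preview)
Your proposal is correct and follows essentially the same approach as the paper: both derive the instability from the proof of \cite[Theorem~9.1.3]{L12} after noting, via Theorem~\ref{t:AHyp}, that the pair $\la^\pm(\ep)$ lies in the open right half-plane while the rest of $\sigma(A(\zero;\ep))$ stays to the left of $-\de_5$. The paper's proof is only two sentences, additionally invoking \cite[Proposition~4.4.12]{L12} for the backward mild solution; you have instead unpacked the Lyapunov--Perron / unstable-manifold machinery behind those citations and, unlike the paper, explicitly argued the real-valuedness of $u_1$ and $u(\cdot)$.
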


\begin{proof}
Since $\ep \in (0,\delta_3)\times (0,\delta_4)$ and $ \Gtwo (\ep_1) < \ep_2  $, 
Theorem \ref{t:AHyp} implies that the part of the spectrum of $A(\zero;\ep)$ in $\{\la \in \Cbb \ : \  - \de_5 < \Re \la\}$ consists of exactly two eigenvalues $\la^\pm (\ep)$ with $\re \la^\pm (\ep )> 0$.
The proposition now follows from the proof of \cite[Theorem 9.1.3]{L12} combined with \cite[Proposition 4.4.12]{L12}.
\end{proof}

%\section*{Appendix}

\appendix
\gdef\thesection{\Alph{section}} % corrected redefinition of "\thesection"
\makeatletter
\renewcommand\@seccntformat[1]{\appendixname\ \csname the#1\endcsname.\hspace{0.5em}}
\makeatother

\section{Multi-parameter perturbations of isolated \linebreak eigenvalues}\label{a:A}

The multi-parameter perturbations of isolated eigenvalues of linear operators were considered in the supplement to the english 
edition of the monograph of Baumgärtel \cite{B85}. Various special cases  were  studied and used in applied contexts, e.g., in  \cite{G09,K14,AK17}.  Corollary 1 of \cite[Supplement]{B85} is complemented in this appendix with a normalization of the perturbed eigenvector and adapted to the needs of Sections \ref{ss:PertLa} and \ref{s:mpHopfB}. In particular, we provide the necessary statements concerning the $\Hbb$-sectoriality and the complexification  of real analytic operators. 
%Since these adaptation can be easily achieved by the combination of the standard one-parameter arguments from the monographs \cite{Kato,RSIV} with the Hartogs theorem on separate analyticity,
%we only sketch main steps of the proofs.

Let $n \in \Nbb$. Let $\Ycal$ be a complex Banach space with a dual $\Ycal'$. By $\<\cdot, \cdot\>$ we denote the corresponding bilinear pairing of $\Ycal$ and $\Ycal'$. Let $\ep^0$ belong to an open subset $V_\crm$ of $\Cbb^n$. A family
$\{T(\ep)\}_{\ep \in V_\crm}$ of closed in $\Ycal$ linear operators $T(\ep)$ is called \cite{B85,Kato} a holomorphic family of type ($\Acal$) if the following conditions are fulfilled: (i) the domain  $\dom T(\ep) =: \Df$ is independent of $\ep \in V_\crm$, (ii) $T(\ep) u$ is holomorphic in $V_\crm$ for every $u \in  \Df$.

\begin{proposition}[cf. \cite{B85}] \label{p:PerSimpleC}
Let $\{T(\ep)\}_{\ep \in V_\crm}$ be a holomorphic family of type ($\Acal$). 
Let $\la_0$ be a simple isolated eigenvalue of $T (\ep^0)$. Then there exist $\delta_0, \delta_1>0$ and a holomorphic function $\Lambda \colon \Bbb_{\delta_1} (\ep^0; \Cbb^n) \to \Dbb_{\delta_0} (\la_0)$ such that $\Lambda (\ep^0) = \la_0$ and such that for every 
$\ep \in \Bbb_{\delta_1} (\ep^0; \Cbb^n)$ the following statements hold:
\begin{enumerate}
\item[(i)] $ \La (\ep) $ is a simple eigenvalue of $T (\ep)$,
\item[(ii)] $ \La (\ep) $ is the only point of the spectrum $\si (T (\ep))$ in the closed $\Cbb$-disc 
$\overline{\Dbb_{\delta_0} (\la_0)}$.
\end{enumerate}
If, additionally, $\psi^0 \in \ker (T (\ep^0) - \la_0 I_\Ycal) $ and a holomorphic $\Ycal'$-valued function $\psi_* \colon V_\crm \to \Ycal'$ satisfy $\<\psi^0,\psi_* (\ep^0)\> \neq 0$, then there exists $\de_2>0$ and  
 a holomorphic function \linebreak $\psi \colon \Bbb_{\delta_2} (\ep^0; \Cbb^n) \to \Ycal$,
such that additionally to (i)--(ii) the following statement is fulfilled for every 
$\ep \in \Bbb_{\delta_2} (\ep^0; \Cbb^n)$: 
\begin{enumerate}
\item[(iii)] $\psi (\ep) \in \Df$, \qquad $T(\ep) \psi (\ep)  = \La (\ep) \psi (\ep)$, \quad and 
$\<\psi (\ep),\psi_* (\ep)\> = 1$.
\end{enumerate}
\end{proposition}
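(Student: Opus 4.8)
The plan is to reduce everything to the Riesz-projection machinery for holomorphic families of type $(\Acal)$, as in \cite[Chapter~VII]{Kato} and the Supplement of \cite{B85}: statements (i)--(ii) are essentially Corollary~1 of \cite[Supplement]{B85}, and the genuinely new ingredient is the holomorphic normalization (iii), which I would obtain by dividing the canonical Riesz eigenvector by a scalar holomorphic function that does not vanish at $\ep^0$.

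First I would pick $\delta_0>0$ so small that the positively oriented circle $\Gamma = \{\zeta\in\Cbb : |\zeta-\la_0|=\delta_0\}$ encircles $\la_0$ and no other point of $\si(T(\ep^0))$, which is possible since $\la_0$ is isolated. The crucial preliminary is to upgrade the type-$(\Acal)$ hypothesis --- ``$T(\ep)u$ is holomorphic in $\ep$ for each fixed $u\in\Df$'' --- to joint holomorphy and uniform invertibility of the resolvent near $\{\ep^0\}\times\Gamma$. I would do this by factoring, on the common domain $\Df$,
\[
	T(\ep)-\zeta I_\Ycal
	=
	\bigl(T(\ep^0)-\zeta I_\Ycal\bigr)\,\bigl(I_\Ycal + (T(\ep^0)-\zeta I_\Ycal)^{-1}\bigl(T(\ep)-T(\ep^0)\bigr)\bigr),
\]
noting that $T(\ep)-T(\ep^0)$ is $T(\ep^0)$-bounded with relative bound tending to $0$ as $\ep\to\ep^0$ (closed graph theorem together with holomorphy in $\ep$, see \cite[Section~VII.2]{Kato}), and inverting the second factor by a Neumann series uniformly for $\zeta\in\Gamma$. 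This produces $\delta_1>0$ such that $\Gamma\subset\rho(T(\ep))$ for all $\ep\in\Bbb_{\delta_1}(\ep^0;\Cbb^n)$ and $(\ep,\zeta)\mapsto (T(\ep)-\zeta I_\Ycal)^{-1}$ is jointly holomorphic on $\Bbb_{\delta_1}(\ep^0;\Cbb^n)\times\Gamma$. I expect this step --- the passage from ``holomorphic on vectors'' to ``holomorphic and uniformly bounded resolvent'' --- to be the main (though standard) obstacle; everything afterwards is routine.

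With this in hand I would form the Riesz projections $P(\ep) = -\tfrac{1}{2\pi\ii}\oint_\Gamma (T(\ep)-\zeta I_\Ycal)^{-1}\dd\zeta$, which are bounded, idempotent, holomorphic in $\ep$, and commute with $T(\ep)$, and satisfy $\|P(\ep)-P(\ep^0)\|\to0$; after shrinking $\delta_1$, each $P(\ep)$ therefore has the same rank as $P(\ep^0)$, namely $1$, since $\la_0$ is simple. The restriction of $T(\ep)$ to the one-dimensional invariant subspace $\ran P(\ep)$ is multiplication by a scalar, and I would take $\La(\ep)$ to be that scalar, equivalently the trace of the finite-rank operator $-\tfrac{1}{2\pi\ii}\oint_\Gamma \zeta\,(T(\ep)-\zeta I_\Ycal)^{-1}\dd\zeta$; holomorphy of $\La$ and $\La(\ep^0)=\la_0$ then follow, and the spectral decomposition theorem gives $\si(T(\ep))\cap\overline{\Dbb_{\delta_0}(\la_0)}=\{\La(\ep)\}$ with $\La(\ep)$ simple, which is (i)--(ii).

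Finally, for (iii) I would let $\psi^0$ span $\ker(T(\ep^0)-\la_0 I_\Ycal)=\ran P(\ep^0)$ and set $\phi(\ep):=P(\ep)\psi^0$, which is holomorphic with values in $\Df$, lies in $\ran P(\ep)$ (so that $T(\ep)\phi(\ep)=\La(\ep)\phi(\ep)$), and has $\phi(\ep^0)=\psi^0\neq0$. The scalar function $g(\ep):=\<\phi(\ep),\psi_*(\ep)\>$ is holomorphic with $g(\ep^0)=\<\psi^0,\psi_*(\ep^0)\>\neq0$, hence nonzero on some ball $\Bbb_{\delta_2}(\ep^0;\Cbb^n)$, on which in particular $\phi(\ep)\neq0$. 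Setting $\psi(\ep):=\phi(\ep)/g(\ep)$ then yields a holomorphic $\Ycal$-valued function with $\psi(\ep)\in\Df$, $T(\ep)\psi(\ep)=\La(\ep)\psi(\ep)$, and $\<\psi(\ep),\psi_*(\ep)\>=1$, completing the proof.
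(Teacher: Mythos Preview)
Your proposal is correct and matches the paper's approach essentially line for line: the paper also cites Corollary~1 of \cite[Supplement]{B85} for (i)--(ii) together with the holomorphy of the Riesz projections $P(\ep)$, then for (iii) sets $\wt\psi(\ep)=P(\ep)\psi^0$, defines $a(\ep)=\langle\wt\psi(\ep),\psi_*(\ep)\rangle$, and takes $\psi=\wt\psi/a$. The only difference is that you spell out the resolvent-factorization argument behind the Riesz projection where the paper simply appeals to Baumgärtel's result.
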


\begin{proof} Let us consider a $\Cbb$-disc $\Dbb_{\delta_0} (\lambda_0)$ such that $\la_0$ is the only point of the spectrum $\si (T (\ep^0))$ in $\Dbb_{\delta_0} (\lambda_0)$.
Statements (i) and (ii) are proved in Corollary 1 of \cite[Supplement]{B85}, which implies additionally that 
 there exists a holomorphic  function $P \colon \Bbb_{\delta_1} (\ep^0; \Cbb^n) \to \Lcal (\Ycal)$ with the corresponding Riesz projectors 
 \linebreak
 $
 %, i.e., \begin{gather} \label{e:Pep}
P (\ep) := - \frac1{2\pi \ii} \int_{\Tbb_{\delta_0} (\lambda_0)} (T(\ep) - \lambda I_\Ycal)^{-1} \dd \lambda $
%\end{gather} 
as its values. Here   $\Tbb_{\delta_0} (\lambda_0) = \pa \Dbb_{\delta_0} (\lambda_0) $ denotes  the boundary  of the disc $\Dbb_{\delta_0} (\lambda_0) $. 
In order to prove statement (iii), we consider  the holomorphic function $\wt \psi : \Bbb_{\delta_1} (\ep^0; \Cbb^n) \to \Df$ defined by $\wt \psi (\ep) = P(\ep) \psi^0$. Then $\wt \psi (\ep^0) = \psi^0$,  and so there exists a neighborhood of $\ep^0$, where $a (\ep) =  \< \wt \psi (\ep),\psi_* (\ep) \> $ is a holomorphic function without zeroes. Thus, the function $\psi =\tfrac{1}{a} \wt \psi$ satisfies (iii).
\end{proof}

\begin{proposition} \label{p:PerSectC}
Let $\{T(\ep)\}_{\ep \in V_\crm}$ be a holomorphic family of type ($\Acal$). 
Assume that $T (\ep^0)$ is an $\Hsec$-sectorial operator such that, for a certain $\zeta_1 \in \Rbb$, the part of its spectrum in the half-plane 
$\hf_1 := \{ \zeta \in \Cbb : \Re \zeta > \zeta_1\}$ consists of at most a finite number $m \in \{0\} \cap \Nbb$ of simple eigenvalues $\la_j$, $1 \le j \le m$. Then there exist constants $\delta_0$, $\delta_1$, $\delta_2$, $\omega $, $\zeta_0 >0$ and
 holomorphic functions \ $\La_j : \Bbb_{\delta_1} (\ep^0; \Cbb^n) \to \Dbb_{\delta_0} (\la_j)$, \quad $1 \le j \le m$, 
such that:
\begin{itemize}
\item[(i)] $\{T(\ep)\}_{\ep \in V_\crm}$ is uniformly $\Hsec$-sectorial locally near $\ep^0$ in the sense that 
$T(\ep) \in \Hsec (\omega, \zeta_0) $ for all $\ep \in \Bbb_{\delta_1} (\ep^0; \Cbb^n)$ (see Section \ref{s:Hypersurf} for the definition of $\Hsec (\omega, \zeta_0)$).
\item[(ii)] $\La_j (\ep^0) = \la_j$ and $ \La_j (\ep) $ is a simple eigenvalue of $T (\ep)$ for every $\ep \in \Bbb_{\delta_1} (\ep^0; \Cbb^n)$ and $1 \le j \le m$.
\item[(iii)] $ \{ \zeta \in \si (T (\ep)) : \Re \zeta > \zeta_1+\delta_2\} = \{\La_j (\ep)\}_{j=1}^m$  for all $\ep \in \Bbb_{\delta_1} (\ep^0; \Cbb^n)$.
\end{itemize}
\end{proposition}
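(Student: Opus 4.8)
To prove Proposition \ref{p:PerSectC}, the plan is to reduce the statement to two ingredients: the holomorphic tracking of each simple eigenvalue already furnished by Proposition \ref{p:PerSimpleC}, and a Neumann-series perturbation argument in the spirit of \cite[Section~VII.2]{Kato} and \cite{HillePhillips} (and, for the eigenvalue part, of Corollary~1 of \cite[Supplement]{B85}) that transfers resolvent bounds from $T(\ep^0)$ to $T(\ep)$ for $\ep$ near $\ep^0$. First I would record the basic smallness fact: since $\{T(\ep)\}$ is a holomorphic family of type $(\Acal)$, the domain $\Df$ is independent of $\ep$, and endowing $\Df$ with the graph norm $\|u\|_\Df := \|u\|_\Ycal + \|T(\ep^0)u\|_\Ycal$ makes it a Banach space on which every $T(\ep)$ is bounded (closed graph theorem, $T(\ep)$ being closed). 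The $\Lcal(\Df,\Ycal)$-valued map $\ep \mapsto T(\ep) - T(\ep^0)$ is pointwise holomorphic and locally bounded (uniform boundedness principle), hence norm-holomorphic, and it vanishes at $\ep^0$; therefore $\theta(\ep) := \|T(\ep)-T(\ep^0)\|_{\Lcal(\Df,\Ycal)} \to 0$ as $\ep \to \ep^0$.

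The heart of the proof is a single transfer lemma. Suppose $U \subseteq \rho(T(\ep^0))$ satisfies $N_U := \sup_{\zeta \in U}\|(T(\ep^0)-\zeta I)^{-1}\|_{\Lcal(\Ycal,\Df)} < \infty$. For $\zeta \in U$ one factors $T(\ep) - \zeta I = \bigl(I_\Ycal + (T(\ep)-T(\ep^0))(T(\ep^0)-\zeta I)^{-1}\bigr)(T(\ep^0)-\zeta I)$; the middle factor has operator norm at most $\theta(\ep) N_U$, so once $\ep$ is close enough to $\ep^0$ it is invertible with inverse of norm at most $2$, whence $U \subseteq \rho(T(\ep))$ and $\|(T(\ep)-\zeta I)^{-1}\| \le 2\|(T(\ep^0)-\zeta I)^{-1}\|$ on $U$, uniformly for $\ep$ in a small polydisc $\Bbb_{\delta_1}(\ep^0;\Cbb^n)$. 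For statement (i) I would apply this with $U$ a copy of $\Sigma_{\pi/2+\omega}$ shifted to have vertex at some $\zeta_0 > \max\{0,\zeta_*\}$, where $T(\ep^0) \in \Hsec(\omega_*,\zeta_*)$ and $\omega \in (0,\omega_*)$: such a $U$ lies inside $\Sigma_{\pi/2+\omega_*}$ shifted to $\zeta_*$, and $\zeta_*$ is at positive distance from $U$, so the $\Hsec$-estimate $\|(T(\ep^0)-\zeta I)^{-1}\| \le M_\omega/|\zeta-\zeta_*|$ gives both $N_U < \infty$ and the decay; the transfer lemma then yields $\|(T(\ep)-\zeta I)^{-1}\| \le 2M_\omega/|\zeta-\zeta_*|$ on $U$, i.e. $T(\ep) \in \Hsec(\omega,\zeta_0)$ with $\omega,\zeta_0$ independent of $\ep$.

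For statements (ii) and (iii) I would first invoke Proposition \ref{p:PerSimpleC} for each of the finitely many simple isolated eigenvalues $\la_1,\dots,\la_m$ of $T(\ep^0)$, obtaining holomorphic $\La_j$ with $\La_j(\ep^0)=\la_j$, each $\La_j(\ep)$ a simple eigenvalue of $T(\ep)$ and the only point of $\si(T(\ep))$ in $\overline{\Dbb_{\delta_0}(\la_j)}$; then I would shrink $\delta_0,\delta_1$ so that the closed discs are pairwise disjoint, meet $\si(T(\ep^0))$ only at the respective $\la_j$, and are contained in $\{\Re\zeta > \zeta_1+\delta_2\}$ where $\delta_2 := \tfrac12\min_j(\Re\la_j-\zeta_1) > 0$ (and any fixed $\delta_2 > 0$ if $m=0$). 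Then I apply the transfer lemma with $U = K := \{\zeta : \Re\zeta \ge \zeta_1+\delta_2\} \setminus \bigcup_{j=1}^m \Dbb_{\delta_0}(\la_j)$: since $\si(T(\ep^0)) \cap \{\Re\zeta > \zeta_1\} = \{\la_1,\dots,\la_m\}$ we have $K \subseteq \rho(T(\ep^0))$, the part of $K$ with $|\zeta|$ bounded is compact so the $\Lcal(\Ycal,\Df)$-norm of the resolvent is bounded there by continuity, and the part with $|\zeta|$ large lies inside a proper sub-sector of the sector of analyticity of $T(\ep^0)$, where the $\Hsec$-estimate again gives a bound; hence $N_K < \infty$. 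The transfer lemma gives $K \subseteq \rho(T(\ep))$ for $\ep$ near $\ep^0$, and combining this with the disc statements yields $\si(T(\ep)) \cap \{\Re\zeta > \zeta_1+\delta_2\} = \{\La_1(\ep),\dots,\La_m(\ep)\}$, which is (iii); (ii) is contained in Proposition \ref{p:PerSimpleC}. Taking $\delta_1$ to be the smallest of the finitely many radii produced above completes the argument.

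The step I expect to be the main obstacle is verifying $N_K < \infty$, that is, that the resolvent of the \emph{unperturbed} operator $T(\ep^0)$ is genuinely uniformly bounded --- in the graph-norm sense --- on the unbounded ``bad region'' $K$. This requires gluing the sectorial resolvent decay valid for $|\zeta|$ large to a compactness argument on the bounded part of $K$, while choosing the disc radii $\delta_0$ and the spectral gap $\delta_2$ so that $K$ avoids $\si(T(\ep^0))$ entirely yet its tail still sits inside the sector in which $T(\ep^0)$ has good resolvent estimates. Everything else is routine bookkeeping of the various neighborhoods together with direct appeals to Proposition \ref{p:PerSimpleC} and to the classical perturbation theory of sectorial operators.
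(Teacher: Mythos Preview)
Your proposal is correct and follows essentially the same strategy as the paper: establish a uniform relative-bound estimate $\|T(\ep)u - T(\ep^0)u\| \le \theta(\ep)(\|u\|+\|T(\ep^0)u\|)$ with $\theta(\ep)\to 0$, then transfer resolvent bounds from $T(\ep^0)$ to $T(\ep)$ by a Neumann-series argument. The paper phrases this via citations to Kato (your relative-bound statement is exactly the paper's estimate \eqref{e:Te1-Te2}, and statement (i) is deduced from \cite[Theorems IX.2.4 and IX.2.6]{Kato} rather than from your explicit shifted-sector computation), while you spell out the ``transfer lemma'' directly; the content is the same.

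The one noteworthy difference is in (iii). You work with the unbounded region $K=\{\Re\zeta\ge\zeta_1+\delta_2\}\setminus\bigcup_j\Dbb_{\delta_0}(\la_j)$ and verify $N_K<\infty$ by gluing a compactness argument on the bounded part to the sectorial decay on the tail --- precisely the step you flag as the main obstacle, and it does go through. The paper instead uses a \emph{bounded} triangular region $\Tcal=\hf_2\cap\bigl(\Cbb\setminus(\zeta_0+\Sigma_{\pi/2+\omega})\bigr)$: since (i) already confines $\sigma(T(\ep))$ to the closed left wedge with vertex $\zeta_0$, intersecting with the half-plane $\hf_2$ gives a compact triangle containing all the $\la_j$, with boundary in $\rho(T(\ep^0))$; one then invokes the spectrum-separation result \cite[Theorem IV.3.16]{Kato} on this bounded contour. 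Both routes are valid; the paper's is marginally slicker in that it reuses (i) to reduce (iii) to a compact-contour problem, while yours is more self-contained and does not rely on the specific Kato theorems.
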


\begin{proof} 
(i) It can be shown similarly to the 1-parameter case of \cite[Section VII.2.1]{Kato} that for a holomorphic family $\{T(\ep)\}_{\ep \in V_\crm}$ of type ($\Acal$), for any compact subset $V_{\crm,0}$ of $V_{\crm}$, and for any $\de>0$, there exists $\delta_1 >0$ such that the uniform estimate 
\begin{gather} \label{e:Te1-Te2}
\|T(\ep^1) u - T(\ep^2) u \|_{\Ycal} \le \de (\|u\|_{\Ycal}+\| T (\ep) u \|_\Ycal) , \quad \ep^1,\ep^2,\ep \in V_{\crm,0}, \quad u \in \Df ,
\end{gather}
holds whenever $|\ep^1 -\ep^2| < \de_1$.
The arguments of the proofs of \cite[Theorems IX.2.4 and IX.2.6]{Kato} 
imply now statement (i).

(ii) Changing $\de_1$ to a smaller positive value if necessary, we can ensure that it satisfies Proposition \ref{p:PerSimpleC} for each $j \le m$. This proves (ii).

(iii) Let us take $\de_1$, $\de_2>0$ so small that (i) and (ii) are valid for $\de_1$ and  $\{\la_j\}_{1\le j\le m}$ is a subset of a slightly shifted open half-plane $\hf_2:=\{ \zeta \in \Cbb : \Re \zeta > \zeta_1+\delta_2\} $. Then $\{\la_j\}_{1\le j\le m}$ is also a subset of the triangular-shaped set
$\Tcal := \hf_2 \cap (\Cbb \setminus (\xi_0 +\Si_{\pi/2+\om})) $
with the boundary $\pa \Tcal \subset \rho (T(\ep^0))$.
Combining (\ref{e:Te1-Te2}) with \cite[Theorems IV.2.14 and IV.3.16]{Kato} and applying these results to $T(\ep)$, one sees that for $\ep \in \Bbb_{\delta_1} (\ep^0; \Cbb^n)$ with a sufficiently small $\delta_1>0$ the boundary $\pa \Tcal$ separates the 
spectra of  all operators $T(\ep)$ with $\ep \in \Bbb_{\delta_1} (\ep^0; \Cbb^n) $ (in the sense of \cite[Theorem IV.3.16]{Kato}). This and (ii) prove (iii) with small enough $\delta_1$. 
\end{proof}

%\begin{remark} \label{e:StabilityRho}
%Note that this Proposition \ref{p:PerSectC} is valid also in the case $n=0$. Moreover, the arguments of the proof of Proposition \ref{p:PerSectC} imply the stability of compact subsets of the resolvent set under small enough perturbations of $\ep$. 
%\end{remark}
\begin{remark} \label{r:PerRes}
Similarly to the proof of Proposition \ref{p:PerSectC}, one can obtain from (\ref{e:Te1-Te2}) and
\cite[Theorem 1.16]{Kato}  the following statement:
if $\zeta_0 \in \rho (T(\ep^0))$, then there exist positive numbers $\delta_0, \delta_1$ such that for every 
$\ep \in \Bbb_{\delta_1} (\ep^0;\Cbb^n) $  the disc $\Dbb_{\delta_0} (\zeta_0)$ is a subset of the resolvent set $ \rho (T(\ep))$.
\end{remark}

\begin{remark} \label{r:PerSectC}
The application of these perturbation results to Section \ref{s:Hypersurf}, namely, to the operator-valued function $A (\cdot,\cdot)$ restricted to
local equilibrium manifolds, requires the complexification of $\ep$ and a local holomorphic extension of $A (\cdot,\cdot)$. Let us consider this procedure in more detail.
Assume (\ref{h:HComp}), (\ref{h:HAn}), and (\ref{e:A}).
Let $(x^0;\ep^0)$ belong to the manifold (\ref{e:Meq}), 
i.e., $F (x^0;\ep^0) = 0$ and $0 \in \rho (A (x^0;\ep^0))$, where $A (x;\ep)$ is the complexification of the $x$-derivative of $D_x F (x,\ep)$. 
The real analyticity assumption (\ref{h:HAn}) on $F$ implies 
that there exists a neighborhood 
$\Ncal_{\crm}^{x^0} \times \Ncal_{\crm}^{\ep^0} \subset \Xcal_\crm \times \Cbb^n$ of $(x^0;\ep^0)$ 
where the complexified Taylor series of $F$ at $(x^0;\ep^0)$ converges 
to a certain  holomorphic  map 
$F^\loc_\crm:\Ncal_{\crm}^{x^0} \times \Ncal_{\crm}^{\ep^0}\to \Zcal_\crm$. 
The neighborhoods $\Ncal_{\crm}^{x^0}$ and $\Ncal_{\crm}^{\ep^0}$ can be chosen such that their 'real parts' $\Ncal_{\rrm}^{x^0} := \Ncal_{\crm}^{x^0} \cap \Xcal$ and $\Ncal_{\rrm}^{\ep^0} := \Ncal_{\crm}^{\ep^0} \cap \Rbb^n$ are subsets 
of the neighborhoods $\Ncal^{x^0}$ and $\Ncal^{\ep^0}$ which determine the parametrization $X^\loc$ of a local equilibrium manifold $\Gr X^\loc$ near $(x^0;\ep^0) $ (see Section \ref{s:Hypersurf}). 
At every $(x;\ep) \in \Ncal_\rrm^{x^0} \times \Ncal_\rrm^{\ep^0}$, the complex $x$-derivative $D_x F^\loc_\crm (x;\ep)$  coincides with $A (x,\ep)$. So
%\begin{gather*} \text{
$A (\cdot;\cdot)$ can be extended from $\Ncal_\rrm^{x^0} \times \Ncal_\rrm^{\ep^0}$ to a holomorphic map $ D_x F^\loc_\crm : \Ncal_{\crm}^{x^0} \times \Ncal_{\crm}^{\ep^0}  \to \Lcal (\Xcal_\crm,\Zcal_\crm) $.
%} \end{gather*}   
We put $A_\crm^\loc (\cdot;\cdot):= D_x F^\loc_\crm (\cdot;\cdot)$ and note that 
$A_\crm^\loc (X^\loc(\ep);\ep) = A (X^\loc(\ep);\ep) $ for all $\ep \in \Ncal_\rrm^{x^0}$.
The construction of the complex local equilibrium manifold $\{(X^\loc_\crm (\ep);\ep) \, : \, \ep \in \Ncal_{1,\crm}^{\ep^0} \}$ with a certain $\Cbb^n$-neighborhood $\Ncal_{0,\crm}^{\ep^0} \subset \Ncal_{\crm}^{\ep^0} $ of $\ep^0$ can be performed for the complexified map $F^\loc_\crm$ in the same way as in the real case of Section \ref{s:Hypersurf}.
The $\Lcal (\Xcal_\crm,\Zcal_\crm)$-valued function $A_\crm^\loc (\ep) := A_\crm^\loc (X^\loc(\ep);\ep) $ is holomorphic for $\ep \in \Ncal_{\crm}^{\ep^0}$.
Assume now that $A_\crm^\loc (\ep^0)$ has a nonempty resolvent set as an operator in $\Zcal_\crm$ (this assumption is valid, e.g., for all $\si(A)$-critical points $\ep$ because of the presence of isolated eigenvalues). Then $A_\crm^\loc (\ep^0)$ is closed as an operator in $\Zcal_\crm$. 
An analogue of (\ref{e:Te1-Te2}) holds for $A_\crm^\loc (\ep)$, and so \cite[Theorems IV.2.14 and IV.2.24]{Kato} imply that $A_\crm^\loc (\ep)$ are closed as operators in $\Zcal_\crm$ for all $\ep$ in a certain small $\Cbb^n$-neighborhood $V_{1,\crm}\subset \Ncal_{0,\crm}^{\ep^0}$ of $\ep_0$ (this can be also obtained from \cite[Lemma A.3.1]{L12}). Since all 
$A_\crm^\loc (\ep)$ have the same domain $\dom A_\crm^\loc (\ep) = \Xcal_\crm$, we see that $\{A_\crm^\loc (\ep)\}_{\ep \in V_{1,\crm}}$ is a holomorphic family of type ($\Acal$) in $\Zcal_\crm$.
In this way, Remark \ref{r:PerRes} is applicable to the perturbations of the resolvent set of $A_\crm^\loc (\ep^0)$, Proposition  \ref{p:PerSimpleC} is applicable to simple isolated eigenvalues, and  Proposition  \ref{p:PerSectC} is applicable to the case where 
$A_\crm^\loc (\ep^0)$ is an $\Hsec$-sectorial operator.
\end{remark}

\section{The Puiseux series proof of Lemma \ref{l:n=2Det<0}}\label{a:B}

For the convenience of the reader, a short proof of Lemma \ref{l:n=2Det<0} is given in this section.
It is based on the Weierstrass preparation theorem and \cite[Theorem 12.2]{RSIV} on the Puiseux series for roots of a complex polynomial depending on a parameter. In comparison with \cite[Section I.2.7]{VT74} this proof does not require the consideration of several related Newton diagrams.

Without loss of generality, we assume $\ep^0= (\ep^0_1, \ep^0_2) = 0 \in V \subset \Rbb^2$.
In a certain $\Cbb^2$-neighborhood $V_\crm$ of $\ep^0=0$, we consider the function 
$r_\crm (\ep_1,\ep_2) = \sum_{j,k=0}^{+\infty}  r_{j,k} \ep_1^j \ep_2^k $ 
with the real Taylor coefficients $r_{j,k}$. That is, $r_\crm$
is a complex analytic extension of the real analytic function $r$.
Under the assumptions of the lemma, $r(0)=0$, $\nabla r (0) = 0$, and the Hessian determinant $\det \Hf_r (0)$ is negative. 
In particular, $r_{0,0}=r_{1,0} = r_{0,1} = 0$.

\emph{Step 1. Let us consider the solutions $\ep_2$ of $r_\crm (\ep_1,\cdot) = 0$ in the particular case $r_{0,2} \neq 0$}.  In this case, the Weierstrass preparation theorem and \cite[Theorem 12.2]{RSIV} imply that small enough $\ep = (\ep_1,\ep_2) \neq 0$ satisfying $r_\crm (\ep) =0$ have the form $\ep = (\ep_1, \ep_2^\pm (\ep_1))$, where two Puiseux series $\ep_2^\pm (\ep_1) = \sum_{j=1}^{\infty} e^\pm_j \ep_1^{j/2} $  with certain coefficients $e^\pm_j \in \Cbb$ are  convergent in a vicinity of $\ep_1 = 0$. Moreover, the multi-set $\{ \ep_2^+ (\ep_1),\ep_2^- (\ep_1)\}$ of the zeroes of the function $r_\crm (\ep_1,\cdot)$ in a vicinity of $\ep_2 = 0$ takes the multiplicity of zeros into account and the total multiplicity of such zeroes for small enough $\ep_1$ is constant, and so equals 2, as in the case $\ep_1=0$.  

\emph{Step 2. Let us prove statement (i) of the lemma.} We consider only the case $\{ r_{2,0} = - r_{0,2} = 1, \ 
r_{1,1} = 0\}$. Every other case, where  $\det \Hf_r (0) <0 $, can be reduced to this case by a constant linear transformation of the parameter space $\Rbb^2$. Statement (i) is invariant under such transformations.

It follows from $r_{2,0} = - r_{0,2} = 1$ that $r (\ep_1,0) >0$ and $r (0,\ep_2) <0$ for
small enough nonzero $\ep_1$ and $\ep_2$. 
Hence, in every $\Rbb^2$-neighborhood of $0$, $r (\ep) = 0$ has a solution in each of the four  quadrants $\{\pm \ep_1>0, \pm \ep_2>0\}$. Comparing this with the statement of Step 1, one sees that
the two Puiseux series $\ep_2^\pm (\cdot)$ are  real locally near $0$ for real $\ep_1$ and real $\ep_2$.
This, in turn, implies that 
\begin{itemize}
\item[(a)] all the Puiseux coefficients $e^\pm_j$ are real and 
$e^\pm_j = 0$ for all odd $j$; 
\item[(b)]  $\ep_2^\pm (\cdot) $ can be chosen such that $e_2^+ \ge 0$ and $e_2^- \le 0$.
\end{itemize}
Note that statement (a) follows from the local conservation of the total multiplicity of zeroes of $r_\crm (\ep_1,\cdot)$. Indeed, if (a) is not fulfilled, the lowest order nonreal coefficient or the lowest order  nonzero coefficient with odd index produces a locally nonreal for small $\ep_1>0$ branch of the zero-locus of $r_\crm (\ep) $, which violate the local conservation of total multiplicity.

In other words, $\ep_2^\pm (\cdot) $ are real analytical functions of $\ep_1$ in an $\Rbb$-neighborhood of $0$. In order to prove the nontangentiality in the statement (i) of the lemma, it is enough to show that $e_2^\pm \neq 0$. One obtains $e_2^\pm = \pm 1$ immediately from the assumptions $r_{2,0} = - r_{0,2} = 1$ and $r_{1,1} = 0$ by plugging $\ep = (\ep_1, \ep_2^\pm (\ep_1))$ into $r_\crm (\ep) = 0$. 

\emph{Step 3. Let us prove statement (ii) of the lemma.}  
Consider first the case $r_{0,2} \neq 0$ .  Steps 1 and 2 imply that the local zero locus of $r(\cdot)$ can be represented as the union of the graphs of two real analytic functions $\ep_2^\pm (\ep_1) = \sum_{j=1}^\infty a^\pm_j \ep_1^j $ with distinct first coefficients $a^+_1 \neq a^-_1$. 
%(note that in the particular case of Step 2, the choice $a^\pm_j = e^\pm_{2j}$ for all $j \in \Nbb$ can be done, and $a^\pm_1 = \pm 1$). 
Plugging the Taylor series of $\ep_2^\pm$ into $r (\ep) = 0$,
one gets $r_{0,2} (a^\pm_1)^2 +r_{1,1} a^\pm_1 +r_{2,0} = 0 $ and, in turn,
$
a^\pm_1 = \frac{-r_{1,1} \pm \sqrt{-\det \Hf_r (0)}}{2 r_{0,2}}.
$ 
This implies (ii) in the case $r_{0,2} \neq 0$. 
The case $r_{2,0} \neq 0$ can be considered similarly. The case $r_{2,0} = r_{0,2}=0$ can be reduced to $r_{2,0} = -r_{0,2}=1$, $r_{1,1} = 0$ by a linear transformation.  This completes the proof.

\end{document}